\numberwithin{equation}{section}
\numberwithin{figure}{section}
\newtheorem{definition}{Definition}
\newtheorem{fact}{Fact}
\newtheorem{theorem}{Theorem}
\newtheorem{corollary}[theorem]{Corollary}
\newtheorem{proposition}{Proposition}
\newtheorem{remark}{Remark}
\newtheorem*{theorem*}{Theorem}
\newtheorem*{cor*}{Corollary}
\newtheorem*{prop*}{Proposition}
\theoremstyle{definition}
\newtheorem{example}{Example}
\newtheorem*{rep@theorem}{\rep@title}
\newcommand{\newreptheorem}[2]{%
\newenvironment{rep#1}[1]{%
 \def\rep@title{#2 \ref{##1}}%
 \begin{rep@theorem}}%
 {\end{rep@theorem}}}
\newcommand{\such}{\text{ such that }}
\newcommand{\eqdef}{\triangleq}
\newcommand{\cF}{\mathcal{F}}
\newcommand{\mathset}[1]{\left\{#1\right\}}
\newcommand{\abs}[1]{\left|#1\right|}
\newcommand{\parenv}[1]{\left( #1 \right)}
\newcommand{\sparenv}[1]{\left[ #1 \right]}
\newcommand{\tends}[1]{\operatorname*{\longrightarrow}\limits_{#1}}
\newcommand{\norm}[1]{\left\lVert#1\right\rVert}
\DeclareMathOperator{\fin}{Fin}
\DeclareMathOperator{\topent}{h}
\DeclareMathOperator{\PM}{PM}
\DeclareMathOperator{\PC}{PC}
\DeclareMathOperator{\ima}{Img}
\DeclareMathOperator{\Int}{Int}
\DeclareMathOperator{\fix}{Fix}
\DeclareMathOperator{\Is}{Iso}
\DeclareMathOperator{\End}{End}
\renewcommand{\Bbb}{\mathbb}
\newcommand{\C}{{\Bbb C}}
\newcommand{\N}{{\Bbb N}}
\newcommand{\R}{{\Bbb R}}
\newcommand{\Z}{{\Bbb Z}}
\newcommand{\E}{{\Bbb E}}
\newcommand*{\fullref}[1]{\hyperref[{#1}]{\autoref*{#1}}}
\begin{document}

\title[Permutations With Restricted Movement]{Permutations With Restricted Movement}

\author{Dor Elimelech}
\address{School of Electrical and Computer Engineering, Ben-Gurion University of the Negev, Israel}
\email{doreli@post.bgu.ac.il}

\subjclass[2010]{37B05, 54F45}

\keywords{ perfect matchings, planner graphs, restricted movement permutations, topological entropy,  dynamical systems}

\date{\today}

\thanks{}

\maketitle

\begin{center}
	\textbf{Abstract}
\end{center}

A restricted permutation of a locally finite directed graph $G=(V,E)$ is a vertex permutation $\pi: V\to V$ for which $(v,\pi(v))\in E$, for any vertex $v\in V$. The set of such permutations, denoted by $\Omega(G)$, with a group action induced from a subset of graph isomorphisms form a topological dynamical system. We focus on the particular case presented by Schmidt and Strasser \cite{SchStr17} of restricted $\Z^d$ permutations, in which $\Omega(G)$ is a subshift of finite type.
We show a correspondence between restricted permutations and perfect matchings (also known as dimer coverings). We use this correspondence in order to investigate and compute the topological entropy in a class of cases of restricted $\Z^d$-permutations. We discuss the global and local admissibility of patterns, in the context of restricted $\Z^d$-permutations. Finally, we review the related models of injective and surjective restricted functions.


\section{Introduction}
\label{CHA:introduction}

	In this paper, we study topological dynamical systems defined by permutations of locally finite graphs with restricted movement. We generalize the model suggested by Schmidt and Strasser \cite{SchStr17} of $\Z^d$ permutations with restricted movement to general locally finite graphs. We find a natural correspondence of such permutations with perfect matchings and use it in order compute the exact entropy for a class of examples.

Let $G=(V,E)$ be a directed graph. A permutation of $V$, $\pi\in S(V)$, is said to be restricted by $G$ if for all $v\in V$, $(v,\pi(v))\in E$.
We define $\Omega(G)$ to be the set of all permutations restricted by $G$. If $G$ is locally finite, equipped with discrete topology, and $\Gamma$ is a group acting on $G$ by graph isomorphisms, then the action of $\Gamma$ on $G$ extends naturally to a continuous action on $\Omega(G)$ (which is a compact space with the product topology).

Schmidt and Strasser \cite{SchStr17} investigated SFTs defined by permutations of $\Z^d$ with movements restricted by some finite set $A\subseteq \Z^d$. That is, permutations of $\Z^d$ satisfying $\pi(n)-n\in A$ for all $n\in \Z^d$.  In the terminology of our model, those are permutations restricted by the graph $G_A=(\Z^d,E_A)$, where 
\[ E_A\eqdef \mathset{(n,m)\in \Z^d\times \Z^d : m-n\in A}.\] 
An element $n\in \Z^d$ acts on permutations by the conjugation $\sigma_n \circ \pi \circ \sigma_n^{-1}$, where $\sigma_n$ is the shift by $n$ operation on $\Z^d$ defined by $\sigma_n(m)\eqdef m+n$. We use the notation
\[ \Omega(A) \eqdef \Omega(G_A)= \mathset{\pi\in S(\Z^d) : \forall n\in \Z^d, \pi(n)-n\in A }, \]
where $S(\Z^d)$ denotes the set of all permutations of $\Z^d$.

In Section~\ref{CHA:PM} we show a correspondence between restricted permutations and perfect matchings. A perfect matching of an undirected graph $G=(V,E)$ is a subset of edges $M\subseteq E$ such that any vertex is covered by a unique edge from $M$. We denote the set of perfect matchings of $G$ by $\PM(G)$. In Theorem~\ref{th:GenCan} we show a general characterization of restricted movement permutations by perfect matchings.

\begin{reptheorem}{th:GenCan}
There is a homeomorphism, $\Psi$, between the elements of $\Omega(G)$ and the perfect matchings in  $\PM(G')$.
If  a group $\Gamma$ acts on $G$ by graph isomorphisms, then the action of $\Gamma$ on $G'$ induces a group action of $\Gamma$ on $\PM(G')$ such that $(\Omega(G),\Gamma)$ and $(\PM(G'),\Gamma)$ are topologically conjugated and $\Psi$ is a conjugation map. That is, $\Psi$ is an homeomorphism and the following diagram commutes
\[
\begin{tikzcd}
\Omega(G) \arrow[r,"\Gamma"] \arrow[d,swap,"\Psi"] &
 \Omega(G) \arrow[d,"\Psi"] \\
\PM(G')  \arrow[r,"\Gamma"] & \PM(G') 
\end{tikzcd}
\]
\end{reptheorem}

By this theorem, we may identify $\Z^2$-permutations with movements restricted by the set $A_L\eqdef \mathset{(0,0),(0,1),(1,0)}$ and perfect matching of the well known honeycomb lattice. We use the theory of perfect matchings of $\Z^2$-periodic bipartite planar graphs \cite{KenOkoShe06} in order to derive an exact expression for the topological entropy of $\Omega(A_L)$, and for the periodic entropy, which is the logarithmic growth-rate of the number of its periodic points. We denote the topological entropy and the periodic entropy  by $\topent(\Omega(A_L))$ and $\topent_p(\Omega(A_L))$ respectively.

\begin{reptheorem}{th:SolL} For $A_L=\mathset{(0,0),(0,1),(1,0)}$,
\[ \lim_{n\to\infty}\frac{\log\abs{\fix_{n\Z^2}(\Omega(A_L))}}{n^2}  \eqdef \topent_p(\Omega(A_L))=\topent(\Omega(A_L))=\frac{1}{4\pi^2}\intop_0^{2\pi}\intop_0^{2\pi}\log\abs{1+e^{ix}+e^{iy}}dxdy,\]
where $\fix_{n\Z^2}(\Omega(A))$ denotes the set of $n$-periodic points in $\Omega(A_L)$.
\end{reptheorem}
In this theorem, we also provide a solution for Problem 6.3.1 in \cite{SchStr17}. For the benefit of proving the existence of a polynomial-time algorithm for counting the rectangular patterns in the SFT $\Omega(A_L)$, we establish the concept of perfect covers. We describe a polynomial-time algorithm for counting perfect covers (Proposition~\ref{prop:GenAlg}), which we use for counting rectangular patterns in $\Omega(A_L)$ (Proposition~\ref{prob:algA_l}).

In Theorem~\ref{th:PerToPM} we describe a different correspondence between permutations and perfect matchings which holds for bipartite symmetric graphs.
\begin{reptheorem}{th:PerToPM}
Let $G=(V\uplus U,E)$ be a directed bipartite graph. There is a continuous embedding of $\Omega(G)$ inside $\PM(\tilde{G})\times \PM(\tilde{G})$, where $\tilde{G}=(V\uplus U,\tilde{E})$ is the undirected version of $G$.
If a group $\Gamma$ acts on $G$ by bipartite graph isomorphisms (that is, $\gamma V=V$ for all $\gamma\in \Gamma$) then it induces an action on $\PM(\tilde{G})$ and the following diagram commutes: 
\[
\begin{tikzcd}
\Omega(G) \arrow[r,"\Gamma"] \arrow[d,swap,"\Phi"] &
 \Omega(G) \arrow[d,"\Phi"] \\
\PM(\tilde{G})^2  \arrow[r,"\Gamma"] & \PM(\tilde{G})^2 
\end{tikzcd}
\]
Furthermore, if $G$ is symmetric (that is, $(v,u)\in E$ implies $ (u,v)\in E$), $\Phi$ is a homeomorphism and $(\Omega(G),\Gamma)$ and $(\PM(\tilde{G}\times \tilde{G}),\Gamma) $ are topologically conjugated. 
\end{reptheorem}
We similarly use Theorem~\ref{th:PerToPM} in order to investigate the SFT $\Omega(A_+)$, where $A_+\eqdef\mathset{(\pm 1,0),(0,\pm 1)}$. Combined with Kasteleyn's results concerning perfect matchings on the two-dimensional square lattice \cite{Kas61}, we prove the following:
\begin{reptheorem}{th:Sol+} The $\mathbb{Z}^2$-SFT $\Omega(A_+)$ is isomorphic to the double-dimer model on the square lattice $\mathbb{Z}^2$, and as a consequence, its topological entropy and periodic point entropy are given by, 
\[ \topent_p(\Omega(A_+))=\topent(\Omega(A_+))=\frac{1}{2}\cdot \intop_0^1 \intop_0^1 \log(4-2\cos(2\pi x) -2\cos (2\pi y))dxdy. \]
\end{reptheorem}

In Section 5 we focus on the topological entropy of SFTs of $\Z^d$-permutations  with movements restricted by some finite set $A$.

\begin{reptheorem}{th:AffInvEnt}
For any finite $A\subseteq \Z^2$ and injective affine transformation $T:\Z^d\to \Z^d$,
\[ \topent(\Omega(A))=\topent (\Omega(T(A))).\] 
\end{reptheorem}

In other words, the topological entropy is invariant under affine transformations of $\Z^d$. We use this invariance property in order to prove that the entropy is the same for all restricting sets which consists of $3$ points which form a triangle in $\Z^d$.

\begin{reptheorem}{th:AffEqCEnt}
Let $d\geq 2$ and $A,B\subseteq \Z^d$ be finite sets with full affine dimension such that $\abs{B}=\abs{A}=d'\leq d+1$.
Then, $\topent(\Omega(A))=\topent(\Omega(B))$. Furthermore, If $d=2$ and $d'=3$
\[ \topent(\Omega(A))=\topent(\Omega(B))=\frac{1}{4\pi^2}\intop_0^{2\pi}\intop_0^{2\pi}\log \abs{1+e^{ix}+e^{iy}}dxdy. \]
\end{reptheorem}

This gives rise to a universal lower bound for the entropy of permutations restricted by sets which are not contained in a line.

\begin{repcor}{cor:UnifBound} 
For $A\subseteq \Z^d$ which is not contained in a line 
\[  \topent(\Omega(A))\geq \frac{1}{4\pi^2}\intop_0^{2\pi}\intop_0^{2\pi}\log\abs{1+e^{ix}+e^{iy}}dxdy. \]
\end{repcor}

In the last part of the work, we discuss the related models of restricted injective and surjective functions on directed graphs. Given a directed graph $G=(V,E)$, we denote by $\Omega_I(G)$  the set of injective functions $V\to V$ restricted by the graph $G$ (in the same manner as we defined for permutations). Similarly, $\Omega_S(G)$ denotes the set of surjective functions restricted by the graph $G$. In a similar fashion to the case of permutations, $\Omega_I(G)$ and $\Omega_S(G)$ have the structure of topological dynamical systems. If $G$ is of the form $G=(\Z^d, E_A)$ for some finite $A\subseteq \Z^d$, $\Omega_I(G_A)$ and $\Omega_S(G_A)$ are SFTs and we use the notation of $\Omega_I(A)$ and $\Omega_S(A)$ for them. We prove that the invariant probability measures of $\Omega_S(A)$ and $\Omega_I(A)$ are supported on the set of $\Z^d$-permutations restricted by $A$. Formally,

\begin{repprop}{prop:SuppMeasure}
If $\pi$ is a random function on $\Z^d$ restricted by $A$ which is either almost surely injective or almost surely surjective and its distribution is shift-invariant, then almost-surely $\pi$ is a permutation of $\Z^d$. Equivalently: The support of any shift-invariant measure on $\Omega_I(A)$ or $\Omega_S(A)$ is contained in $\Omega(A)$.
\end{repprop}

Using the variational principle and Proposition~\ref{prop:SuppMeasure} we prove the following:
 
\begin{reptheorem}{th:InjSurj}
For any finite non-empty set  $A\subseteq\Z^d$, 
\[ \topent(\Omega(A))=\topent(\Omega_I(A))=\topent(\Omega_S(A)). \] 
\end{reptheorem}

\begin{flushleft}
	\textbf{Acknowledgments}
\end{flushleft}
The author thanks Tom Meyerovitch and Moshe Schwartz for their guidance during this work. Also,
the author thanks Ron Peled for a helpful discussion regarding the connection between restricted permutations and the double dimer model on the square lattice. 
This work was supported by the Israel Science Foundation (ISF) under grant No. 270/18 and 1052/18.


\section{Preliminaries}
\label{CHA:preliminaries}
A topological dynamical system is a pair $(X,\Gamma)$, where $X$ is a topological compact space (which is usually also a metric space), and $\Gamma$ is a semigroup, acting on $X$ by continuous transformations. In this work we investigate topological dynamical systems defined by permutations of graphs.  

\begin{definition}
Let $G=(V,E)$ be a countable locally finite directed graph. The set of permutations of $V$ restricted by $G$, denoted by $\Omega(G)$ is defined to be 
\[ \Omega(G)\eqdef \mathset{\pi\in S(V) : \forall v\in V, (v,\pi(v))\in E } ,\] 
where $S(V)$ denotes the set of permutations of $V$. A permutation in $\Omega(G)$ is said to be restricted by the graph $G$.
\end{definition}

In the context of restricted permutations, we view undirected graphs as symmetric directed graphs. That is, for an undirected graph $G=(V,E)$, the set of $G$-restricted permutations is defined to be $\Omega(G)\eqdef \Omega(\vec{G})$, where $\vec{G}=(\vec{V},\vec{E})$ is the directed graph defined by 
\[ \vec{V}=V, \  \vec{E}=\mathset{(v,u)\in V\times V : \mathset{v,u}\in E}.\]
We view $\Omega(G)$ as a subset of $V^V$, and thus as a topological space with the topology inherited from the product topology on $V^V$ (where $V$ has the discrete topology). Using the fact $G$ is locally finite, it is   easy to verify that $\Omega(G)$ is a compact space.

Let $\Is(G)$ denote the group of graph isomorphisms  of $G$.
Then a group $\Gamma\subseteq \Is(G)$ acts on $\Omega(G)$ by conjugation, namely $(\gamma,\pi) \mapsto \gamma(\pi)=\gamma \circ  \pi \circ  \gamma^{-1}$ defines a continuous left action of $\Gamma$ on $\Omega(G)$.

\begin{example}
\label{ex:Honeycomb}
Consider the undirected graph $L_H=(V_H,E_H)$ where
\[ V_H = \mathset{v + m\cdot (\sqrt{3},0)+n\cdot \parenv{\frac{\sqrt{3}}{2},\frac{3}{2}}:m,n\in \Z, v\in \mathset{\parenv{\frac{\sqrt{3}}{2},\frac{1}{2}},(\sqrt{3},1)}}\]   
and any vertex $v$ is connected to its three closest neighbours in $V_H$. That is, a vertex of the form $v=\parenv{\frac{\sqrt{3}}{2},\frac{1}{2}}+ m\cdot (\sqrt{3},0)+n\cdot \parenv{\frac{\sqrt{3}}{2},\frac{3}{2}}$ is connected in $E_H$ to $v+u$ where $u\in\mathset{\parenv{\pm \frac{\sqrt{3}}{2},\frac{1}{2}}, (0,-1) }$.

This graph is the well known two-dimensional honeycomb lattice (see Figure~\ref{fig:Honey_basic}). We have $\Z^2$ acting on $L_H$ by translations of the fundamental domain. By this we mean that $n=(n_1,n_2)\in \Z^2$ acts on a vertex $v\in V_H$ by 
 \[ n(v)\eqdef v + n_1\cdot (\sqrt{3},0)+n_2 \cdot \parenv{\frac{\sqrt{3}}{2},\frac{3}{2}}, \] 
see Figure~\ref{fig:Honey_fondemnetal}. We note that $L_H$ is a bipartite graph.

\begin{figure}
    \centering
    \subfigure[]
    {
        \includegraphics[scale=.13]{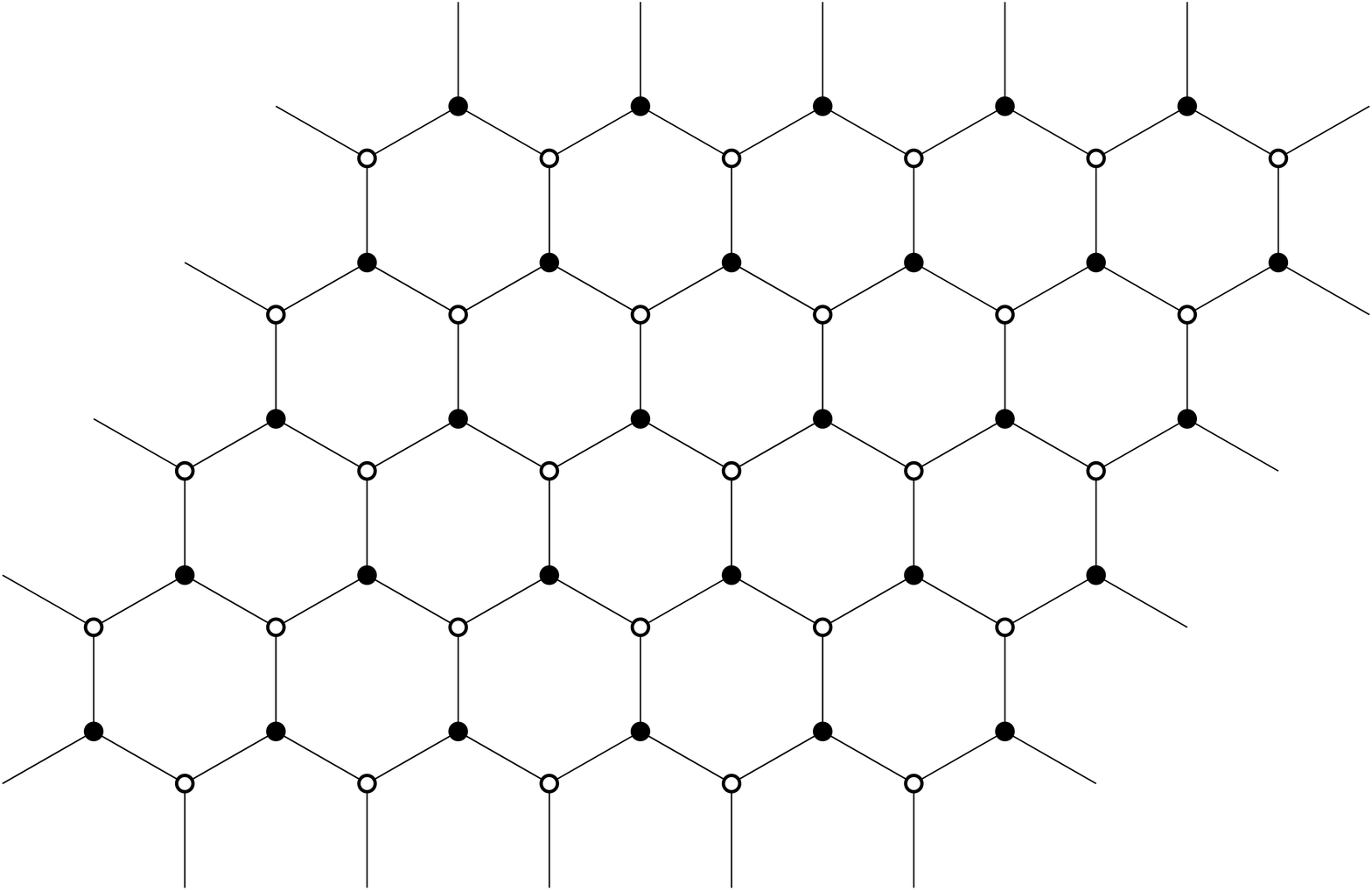}
        \label{fig:Honey_basic}
    }
    \subfigure[]
    {
        \includegraphics[scale=.37]{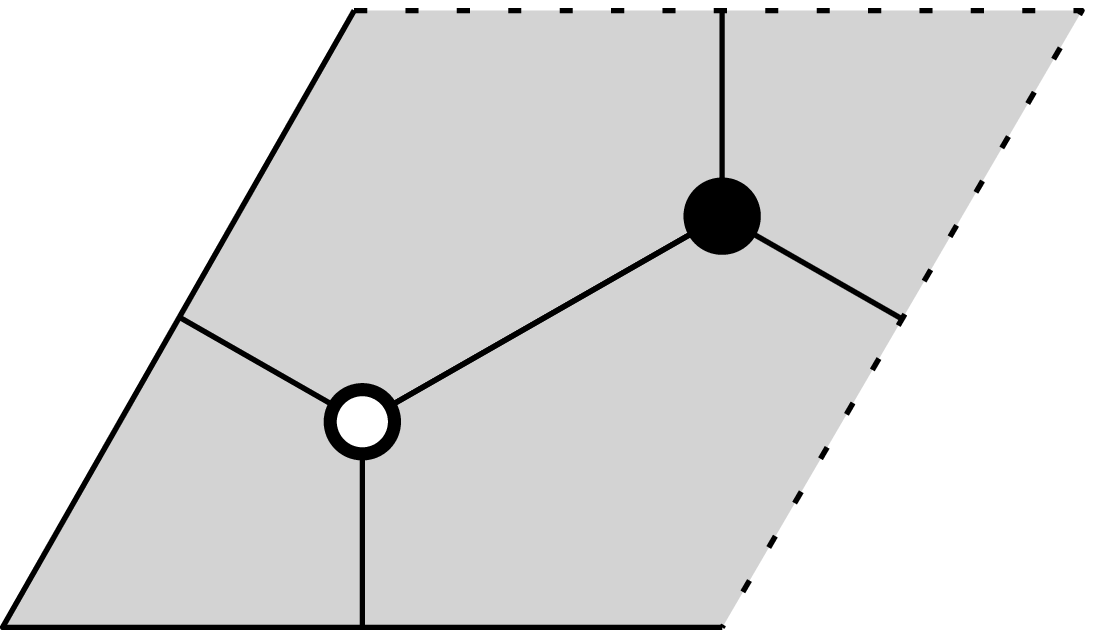}
        \label{fig:Honey_fondemnetal}
    }
    \caption
    {
        (a) The two-dimensional honeycomb lattice.
        (b) The fundamental domain.
    }
    \label{fig:Honey}
\end{figure}
\end{example}

\begin{example}
\label{ex:GenToZd1}
Let $\Gamma$ be a countable discrete group and $A\subseteq \Gamma$ be a finite non-empty set.  Consider the graph $G=(\Gamma,E)$, where 
\[ E \eqdef \mathset{(\gamma,\gamma a) : \gamma \in \Gamma, a\in A }.\] 
We have $\Gamma$ acting on $G$ by multiplication from the left. That is, $\alpha \in \Gamma$ acts on $\gamma\in \Gamma$ by $\alpha \cdot \gamma$. Note that for any $\gamma_1, \gamma_2\in A$ we have 
\begin{align*}
(\gamma_1,\gamma_2)\in E &\iff \exists a\in A \such \gamma_1=\gamma_2  a \\
&\iff  \exists a\in A \such \alpha \gamma_1=\alpha \gamma_2  a \\
& \iff (\alpha(\gamma_1),\alpha(\gamma_2))\in E.
\end{align*} 
This shows that $\Gamma$ acts by graph isomorphisms. Since $\Gamma$ is countable and $A$ is finite, $G$ is a locally finite graph. 
\end{example}

Consider the case when we choose the group from Example~\ref{ex:GenToZd1} to be $\Gamma=\Z^d$ for some $d\in \N$, and we take $A\subseteq \Z^d$ to be some finite set. In that case we have the graph  $G_A\eqdef(\Z^d,E_A)$, where 
\[ E_A\eqdef \mathset{(n,m)\in \Z^d\times \Z^d : m-n\in A},\] 
and $\Z^d$ acts on $(\Z^d,E_A)$ by translations. That is, $n$ acts on $m$ by $\sigma_{n}(m)\eqdef m+n$.
If a permutation of $\Z^d$ is restricted by $G_A$, we say that it is restricted by the set $A$.

The first case presented in Example~\ref{ex:GenToZd} was first visited by Schmidt and Strasser in \cite{SchStr17}, and will be a running example throughout this work. 
\begin{example}
\label{ex:GenToZd}
Let $d=2$ and consider the sets $A_+\eqdef \mathset{(\pm 1,0),(0,\pm 1)}\subseteq \Z^2,$ and $ A_L\eqdef \mathset{(0,0),(1,0),(0, 1)}\subseteq \Z^2$.

For a permutation $\pi \in \Omega(G_{A_L})$, we consider the orbit of an element $m\in \Z^d$, which is  $\parenv{\pi^{ n}(m)}_{n\in \Z}$. We note that the orbit of any point is either a single point or a bi-infinite sequence.

We can represent each infinite orbit of $\pi$ by a polygonal path in $\Z^2$, moving either north or east at each step. We can characterize $\pi$ by the configuration of non-intersecting polygonal paths in $\Z^2$ defined by its bi-infinite orbits. On the other hand, any set of such polygonal paths defines an element in $\Omega(G_{A_L})$ (see Figure~\ref{fig:Paths_L}).

In a similar fashion, we can represent a permutation in $\Omega(G_{A_+})$ by its orbits. In that case, orbits can be infinite, or finite with size greater than one. Each permutation in $\Omega(G_{A_+})$ corresponds to a covering of $\Z^2$ by transpositions (orbits of size 2) and  polygonal paths moving north, south, east or west at each step (see Figure~\ref{fig:Paths_+}).
In Figure~\ref{fig:Graph_PL} we exhibit the directed graph associated with $G_{A_+}$ and $G_{A_L}$.

\begin{figure}
    \centering
    \subfigure[]
    {
        \includegraphics[scale=.4]{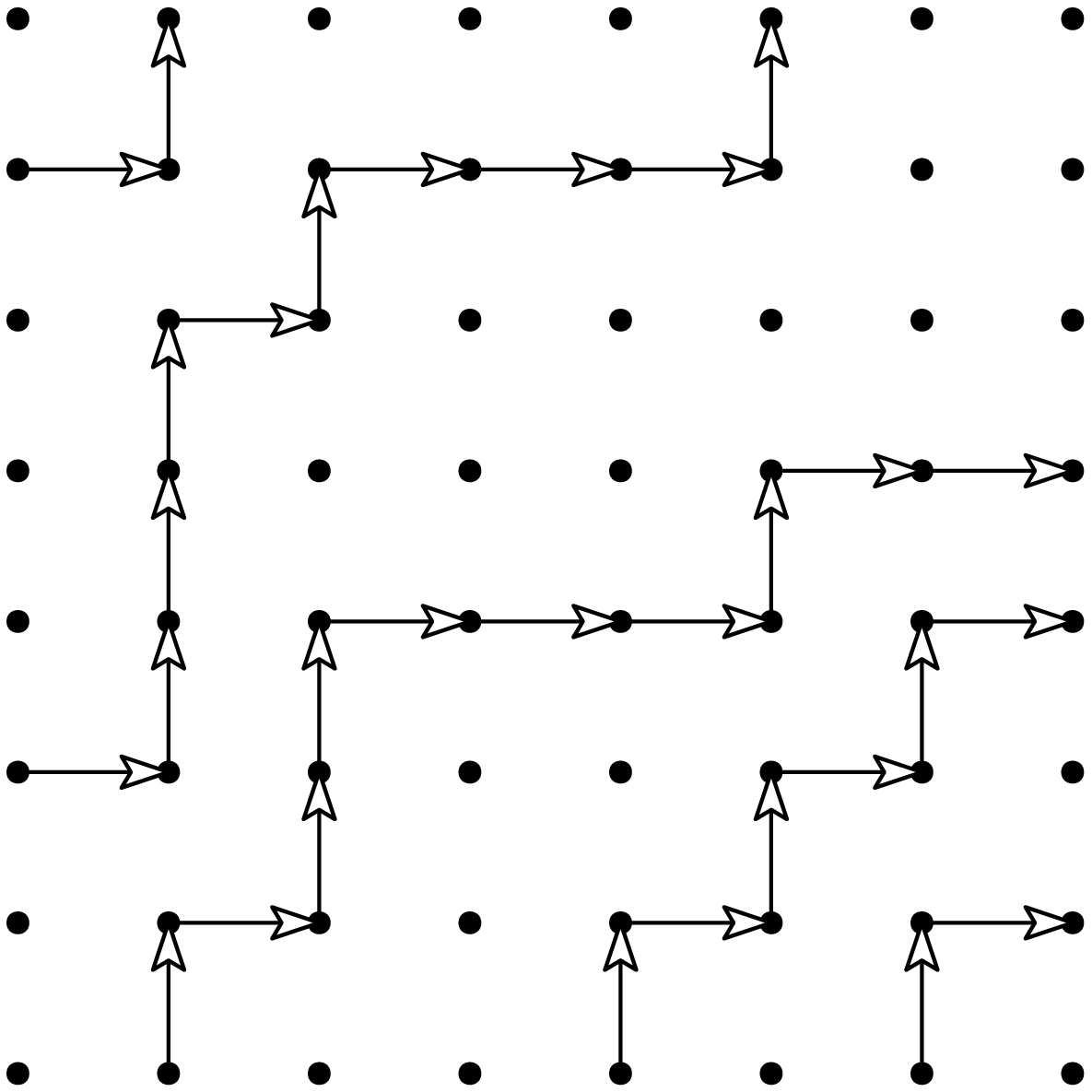}
        \label{fig:Paths_L}
    }  \hspace{7mm}
    \subfigure[]
    {
        \includegraphics[scale=.4]{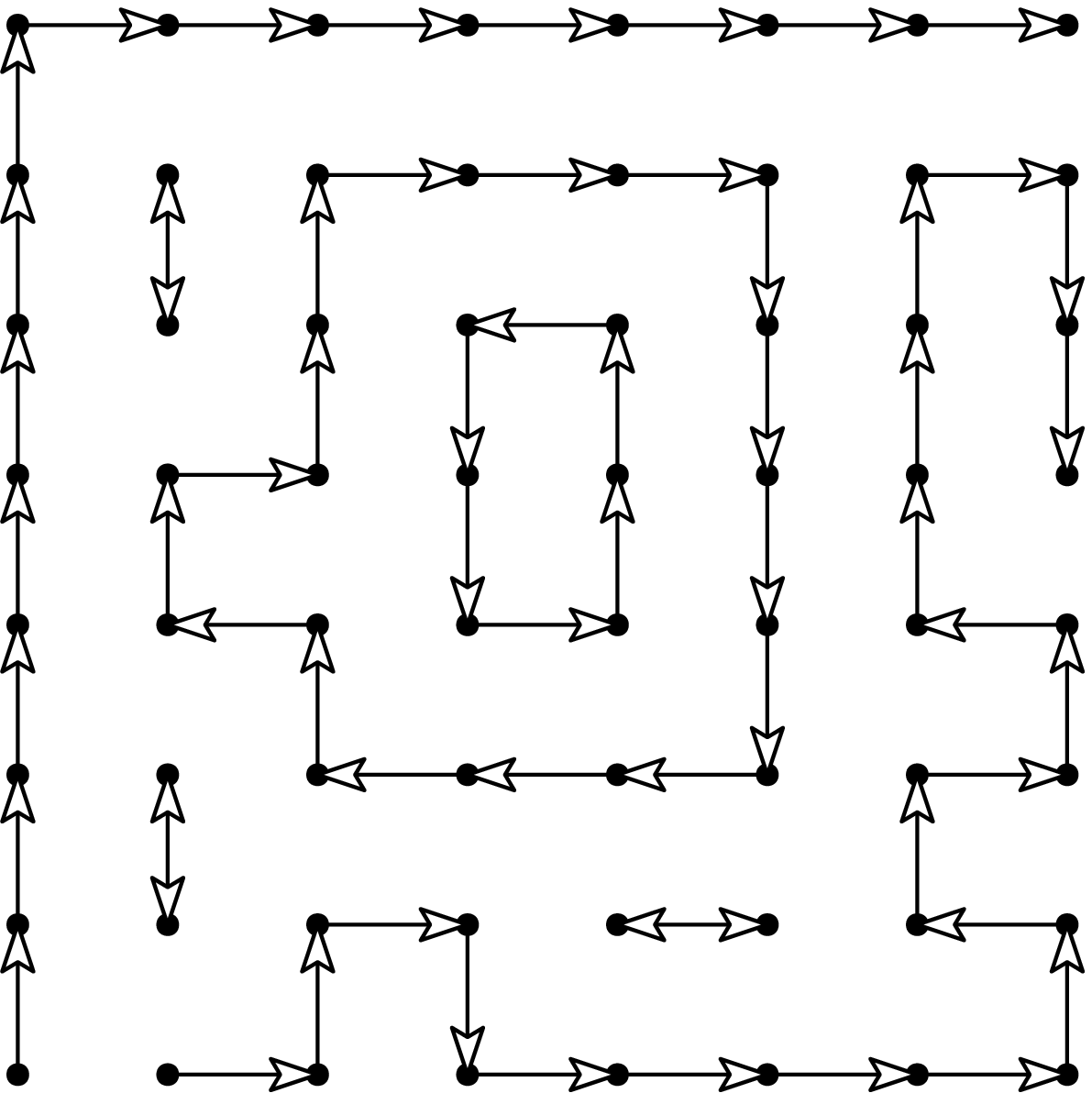}
        \label{fig:Paths_+}
    }
    \caption
    {
        (a) Paths configuration corresponding to an elements in $\Omega(G_{A_L})$.
        (b) Paths configuration corresponding to an elements in $\Omega(G_{A_+})$.
    }
    \label{fig:Paths}
\end{figure}

\begin{figure}
 \centering
  \includegraphics[width=150mm, scale=0.65]{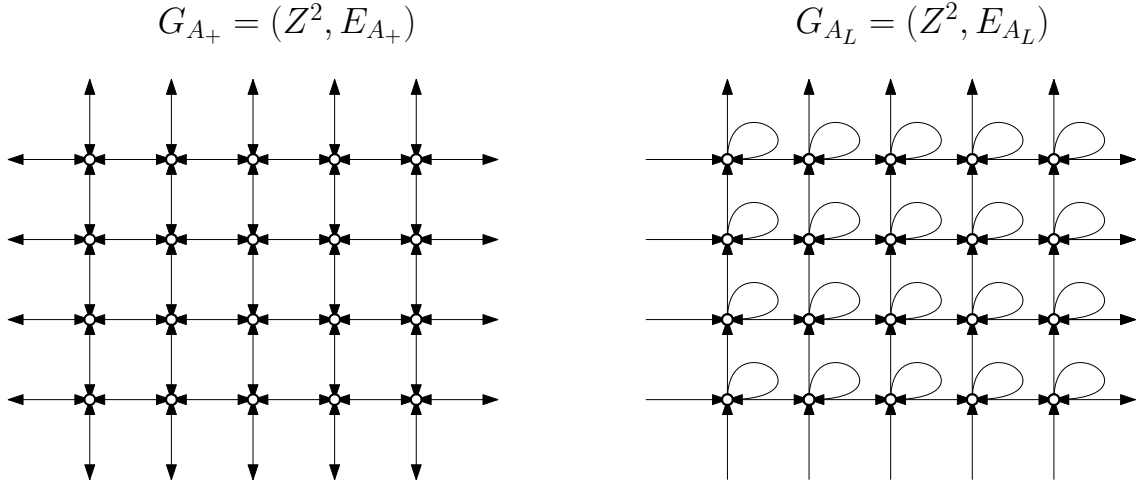}
  \caption{The graphs corresponding to $A_+$ and $A_L$.}
  \label{fig:Graph_PL}
\end{figure}
\end{example}

An important special case of dynamical systems is a subshift of finite type (SFT). Given a finite set, $\Sigma$, and an integer $d\in\mathbb{N}$, we consider the set $\Sigma^{\Z^d}$. An SFT, $\Omega \subseteq \Sigma^{\Z^d}$, is a subset of $\Sigma^{\Z^d}$, which is defined by a finite set of forbidden patterns. That is, there exists a finite set of forbidden patterns, $F\subseteq \bigcup_{B\in \fin(\Z^d)} \Sigma^B$, such that  
\[ \Omega=\mathset{\omega\in \Sigma^{\Z^d} : \forall n\in \Z^d \text{ and } B\in\fin(\Z^d), (\omega\circ \sigma_n)(B)\notin F },\]
where $\omega\left(B\right)$ is the restriction of $\omega$ to the
coordinates contained in the set $A$ and $\fin(\Z^d)$ denotes the set of all finite subsets of $\Z^d$. Following  convention, for $\omega \in \Sigma^{\Z^d}$ and $n\in \Z^d$, we will denote the composition $\omega \circ \sigma_n$ by $\sigma_n(\omega)$.

Throughout this work, for $m,n\in \N$ we use the notation $[n]$ for the set $\mathset{0,1,2,\dots,n-1}$ and $[m,n]$ for the set 
 $\mathset{m,m+1\dots,n-1}$. If $n=(n_1,\dots,n_d)\in \N^d$ we denote the box $[n_1]\times [n_2]\times \cdots \times [n_d]$ by $[n]$. 

 For a multi-index $n\in \N^d$, the set of $[n]$ patterns appearing in elements of $\Omega$ is denoted by $B_n(\Omega)$. Formally,
\[B_n(\Omega) \eqdef \mathset{P\in \Sigma^{[n]}  : \exists \omega\in \Omega \such \omega([n])=P}. \]

The topological entropy of an SFT $\Omega$ with the action of $\Z^d$ by shifts ($n$ acts by $\sigma_n$) is defined to be 
\[ \topent (\Omega) \eqdef \limsup_{n\to \infty} \frac{\log\abs{B_n(\Omega)}}{\abs{[n]}},\]
where we define that a sequence  $(n_k)_{k=1}^{\infty}\subseteq \N^d$ converge to $\infty$ if $n_k(i)\tends{k} \infty$ for all $1\leq i \leq d$.

Topological entropy is generally defined whenever $\Z^d$ acts continuously on a compact metrizable space, see \cite[Chapter 5]{Schm12} for the general definition and a detailed discussion. The  above definition of topological entropy  is equivalent in the particular case where $\Z^d$ acts on an SFT by shifts, in which we will focus throughout this work.

\begin{fact} 
\label{prop:fekete} \cite[ Section 2.2]{HocMey10} The $\limsup$ defining the topological entropy is actually a limit and \[ \topent(\Omega)=\inf_{n\in \N^d} \frac{\log \abs{B_n(\Omega)}}{\abs{[n]}}.\]
\end{fact}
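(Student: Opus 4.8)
The plan is to deduce this from a submultiplicativity (Fekete-type) property of the sequence $a_n\eqdef\log\abs{B_n(\Omega)}$, $n\in\N^d$. I will assume $\Omega\neq\emptyset$ (otherwise there is nothing to prove), so that $\abs{B_n(\Omega)}\geq 1$ and $a_n\geq 0$ for every $n$. Since $a_n/\abs{[n]}\geq \inf_{m} a_m/\abs{[m]}$ holds trivially for every $n$, we already have $\liminf_{n\to\infty} a_n/\abs{[n]}\geq \inf_m a_m/\abs{[m]}$. The entire content of the statement is therefore the reverse inequality $\limsup_{n\to\infty} a_n/\abs{[n]}\leq \inf_{m} a_m/\abs{[m]}$; combined with the trivial bound this forces $\inf_m a_m/\abs{[m]}\leq \liminf\leq\limsup\leq\inf_m a_m/\abs{[m]}$, so the $\limsup$ is an honest limit equal to the infimum.

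First I would establish subadditivity of $a_n$ in each coordinate separately. Fix a coordinate $i$ and split the $i$-th side length as $n_i=p+q$. The box $[n]$ is the disjoint union of its sub-box on which the $i$-th coordinate lies in $[0,p)$ and its sub-box on which it lies in $[p,p+q)$. Any pattern $P\in B_n(\Omega)$ restricts to a pattern on the first sub-box and, after an appropriate shift, to a pattern on a box of the second type; because $\Omega$ is shift-invariant both restrictions are themselves admissible patterns, and $P$ is determined by the pair of them. Hence the restriction map is injective and
\[
\abs{B_{(\dots,p+q,\dots)}(\Omega)}\leq \abs{B_{(\dots,p,\dots)}(\Omega)}\cdot\abs{B_{(\dots,q,\dots)}(\Omega)},
\]
which is exactly $a_{(\dots,p+q,\dots)}\leq a_{(\dots,p,\dots)}+a_{(\dots,q,\dots)}$, the remaining coordinates being held fixed.

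Next I would run the multidimensional Fekete argument. Fix $m\in\N^d$ and, for an arbitrary $n$ with all coordinates large, use the division algorithm to write $n_i=q_i m_i+r_i$ with $0\leq r_i<m_i$. Iterating the coordinatewise subadditivity above—first slicing the first coordinate into $q_1$ blocks of length $m_1$ plus a remainder of length $r_1$, then doing the same in the second coordinate, and so on—bounds $a_n$ by a main term consisting of $\prod_i q_i$ copies of $a_m$ together with boundary terms coming from the remainder slices. Dividing by $\abs{[n]}=\prod_i n_i$ and letting $n\to\infty$ (so every $q_i\to\infty$), the main term tends to $a_m/\abs{[m]}$, while the boundary terms, being $O\parenv{\prod_i q_i\cdot\sum_i q_i^{-1}}$ in number with $a$-values bounded by a constant depending only on $m$, contribute $o(1)$. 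This yields $\limsup_{n\to\infty} a_n/\abs{[n]}\leq a_m/\abs{[m]}$, and taking the infimum over $m$ finishes the proof. (Alternatively, the same conclusion follows by induction on $d$: holding the last coordinate fixed, the first $d-1$ coordinates produce a limit by the inductive hypothesis, this limit is subadditive in the last coordinate, and the one-dimensional Fekete lemma concludes, once one checks that the iterated limit agrees with the joint limit.)

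The main obstacle is the bookkeeping in the tiling step: one must verify that after the nested application of subadditivity across all $d$ coordinates the number of remainder sub-boxes and their total $a$-weight are genuinely of lower order than the main term $\prod_i q_i\,a_m$. This is purely a matter of enumerating the $O(2^d)$ types of sub-boxes produced and bounding each remainder factor by a constant $C_m$ depending only on $m$; the nonnegativity $a_n\geq 0$ keeps every intermediate inequality one-sided and harmless. Everything else reduces to the elementary observation that an admissible pattern is determined by, and restricts to, admissible patterns on any partition of its support into boxes.
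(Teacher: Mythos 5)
Your proof is correct: the coordinatewise subadditivity of $a_n=\log\abs{B_n(\Omega)}$ (valid because restrictions of globally admissible patterns are globally admissible, by shift-invariance) followed by the tiling/Fekete argument with the remainder boxes bounded by a constant $C_m$ is exactly the standard argument, and your bookkeeping of the $O(2^d)$ remainder types and the limits is sound. Note that the paper offers no proof of this Fact at all — it simply cites \cite[Section 2.2]{HocMey10} — and the subadditivity argument you give is precisely the one underlying that reference, so there is no substantive difference in approach to report.
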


Two SFTs, $\Omega_1 \subseteq \Sigma_1^{\Z^{d}}$ and $,\Omega_2 \subseteq \Sigma_2^{\Z^{d}}$, are said to be conjugate if there exists a homeomorphism $\Phi:\Omega_1\to \Omega_2$ that commutes with the action of $\Z^d$. Such a map is called a conjugacy. 

\begin{fact} 
\label{fact:conj} \cite[Chapter 1]{KerHan11} If $\Omega_1\subseteq \Sigma_1^{\Z^{d}}$ and $\Omega_2\subseteq \Sigma_2^{\Z^{d}}$ are conjugate, then $\topent(\Omega_1)=\topent(\Omega_2)$.
\end{fact}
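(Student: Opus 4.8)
The plan is to show that any conjugacy between two $\Z^d$-subshifts is in fact a \emph{sliding block code}, and then to use this local structure to compare pattern counts on large boxes. The argument is symmetric in $\Omega_1$ and $\Omega_2$, so it suffices to prove $\topent(\Omega_2)\le \topent(\Omega_1)$; applying the identical reasoning to the inverse conjugacy then yields the reverse inequality and hence equality.

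Let $\Phi:\Omega_1\to\Omega_2$ be the given conjugacy. The key step is the Curtis--Hedlund--Lyndon theorem: since $\Phi$ is continuous, $\Omega_1$ is compact, and $\Phi$ commutes with every shift $\sigma_n$, there exist a radius $r\in\N$ and a local rule $\phi$ defined on patterns over the cube $C\eqdef\mathset{-r,\dots,r}^d$ such that $\Phi(\omega)_0=\phi(\omega(C))$ for every $\omega\in\Omega_1$, and consequently, by shift-equivariance, $\Phi(\omega)_n=\phi\parenv{(\sigma_n\omega)(C)}$ for all $n\in\Z^d$. Establishing this is the main obstacle. It follows from a compactness argument: continuity of $\Phi$ together with the fact that the cylinder sets $\mathset{\omega:\omega(C)=P}$ form a clopen basis shows that $\Phi(\omega)_0$ depends on only finitely many coordinates of $\omega$ near each point, and compactness of $\Omega_1$ lets one choose a single window radius $r$ that works uniformly over all of $\Omega_1$.

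With the local rule in hand the counting step is routine. Fix a multi-index $N=(N_1,\dots,N_d)$ with every $N_i>2r$, and write $N-2r\eqdef(N_1-2r,\dots,N_d-2r)$. Because $\Phi(\omega)_n$ depends on $\omega$ only through the window $n+C$, the pattern of $\Phi(\omega)$ on the sub-box obtained by deleting $r$ layers from each side of $[N]$ is completely determined by $\omega([N])$. After translating this sub-box to the origin, this yields a well-defined map $B_N(\Omega_1)\to B_{N-2r}(\Omega_2)$; since $\Phi$ is surjective and $\Omega_2$ is shift-invariant, every pattern of $\Omega_2$ on $[N-2r]$ arises in this way, so the map is onto and therefore $\abs{B_{N-2r}(\Omega_2)}\le \abs{B_N(\Omega_1)}$.

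Finally I would pass to the limit. By Fact~\ref{prop:fekete} the defining $\limsup$ is a genuine limit, and as $N\to\infty$ every coordinate tends to infinity, so $\abs{[N-2r]}/\abs{[N]}\to 1$. Combining these with the inequality above gives
\[ \topent(\Omega_2)=\lim_{N\to\infty}\frac{\log\abs{B_{N-2r}(\Omega_2)}}{\abs{[N-2r]}}\cdot\frac{\abs{[N-2r]}}{\abs{[N]}}=\lim_{N\to\infty}\frac{\log\abs{B_{N-2r}(\Omega_2)}}{\abs{[N]}}\le \lim_{N\to\infty}\frac{\log\abs{B_N(\Omega_1)}}{\abs{[N]}}=\topent(\Omega_1). \]
Exchanging the roles of $\Omega_1$ and $\Omega_2$ via $\Phi^{-1}$, which is again a sliding block code, yields $\topent(\Omega_1)\le\topent(\Omega_2)$, completing the proof. (Alternatively, one could invoke the coordinate-free definition of topological entropy via open covers or separated sets, which is manifestly invariant under any homeomorphism commuting with the action; since the excerpt records that this general notion agrees with the combinatorial one for shifts, the conclusion is immediate. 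The sliding-block argument above is the more self-contained route.)
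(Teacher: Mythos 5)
Your proof is correct, but it is worth pointing out that the paper itself does not prove this statement: it is recorded as a Fact with a citation to \cite{KerHan11}, the implicit justification being that topological entropy of a $\Z^d$-action, defined via open covers or separated sets, is manifestly invariant under any homeomorphism intertwining the two actions, and that (as the paper notes after its definition of entropy, citing \cite{Schm12}) the combinatorial pattern-counting formula agrees with this general notion for shift actions. That is exactly the alternative you sketch in your closing parenthetical. Your main argument is a genuinely different, self-contained route that never leaves the combinatorial definition actually used in the paper: Curtis--Hedlund--Lyndon turns the conjugacy $\Phi$ into a sliding block code of some radius $r$; the induced map $B_N(\Omega_1)\to B_{N-2r}(\Omega_2)$ is well defined (the image pattern depends only on $\omega([N])$, since each output symbol reads a window $n+C\subseteq[N]$) and surjective (by shift-invariance of $\Omega_2$ and surjectivity of $\Phi$), giving $\abs{B_{N-2r}(\Omega_2)}\le\abs{B_N(\Omega_1)}$; and the limit comparison is legitimate because Fact~\ref{prop:fekete} upgrades the $\limsup$ to a limit and $\abs{[N-2r]}/\abs{[N]}\to 1$ as all coordinates of $N$ tend to infinity. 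Applying the same reasoning to $\Phi^{-1}$, which is again continuous and equivariant, closes the argument. What your route buys is elementarity and independence from the abstract theory of entropy for group actions; what the citation route buys is brevity and generality, since conjugacy-invariance of the open-cover definition holds for arbitrary compact systems, not just subshifts.
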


The model of $\Z^d$ permutations restricted by some finite set, presented in Example~\ref{ex:GenToZd}, which will be the main focus of this work, was introduced by Schmidt and Strasser in \cite{SchStr17}. A permutation $\pi\in \Omega(A)$ for some finite $A\subseteq \Z^d$ can be identified with an element $\omega_\pi \in A^{\Z^d}$, where $\omega_\pi(n)=\pi(n)-n\in A$. This identification induces an embedding of $\Omega(G_A)$ in $A^{\Z^d}$, which we denote by $\Omega(A)$. Formally,
\[ \Omega(A)\eqdef \mathset{\omega_\pi : \pi\in \Omega(G_A)} =\{ \omega \in A^{\mathbb{Z}^d}~:~ n \mapsto (n+\omega(n)) \mbox{ is a permutation of } \mathbb{Z}^d\}.\]

From now on, we will use this notation in order to describe $\Z^d$-restricted permutations.
With this embedding, from a simple calculation it follows that the action of $\Z^d$ on $\Omega(G_A)$ by $m(\pi)=\sigma_m^{-1} \circ \pi \circ \sigma_m$ translates to a shift operation in $\Omega(A)$. That is, $\omega_{m(\pi)}(n)=(m(\omega_\pi))(n)$.

In their work \cite{SchStr17}, Schmidt and Strasser have shown that $\Omega(A)$ (with the shift operation) is an SFT for any finite $A\subseteq \Z^d$. They investigated the dynamical properties of such SFTs and their entropy, in general, and in some specific examples. We will focus on studying the entropy, mostly in the two-dimensional cases. 

\begin{definition}
Given a finite  set $A\subseteq \Z^d$ and $n=(n_1,n_2,\dots,n_d)\in \N^d$, a function $f:[n]\to \Z^d$ is said to be a permutation of the $n_1\times \cdots \times n_d$ discrete torus if $\tilde{f}:[n]\to [n]$ defined by
\[ \tilde{f}(m)=f(m) \bmod n,\]
is a permutation of $[n]$, where $m\bmod n=\left(m_{1}\bmod n_{1},m_{2}\bmod n_{2},\dots,m_{d}\bmod n_{d}\right)$ is the modulus of $m\in \Z^d$ from $n\in \N^d$. If $f$ is restricted by $A$,  we say that $f$ is a restricted permutation (by $A$) of the torus.
\end{definition}

\begin{definition}
Let $\Omega\subseteq \Sigma^{\Z^d}$ be a d-dimensional SFT over some finite alphabet $\Sigma$. For  a subgroup $\Gamma\subseteq \Z^d$ of finite index, we denote the set of $\Gamma$ periodic points by
\[ \fix_\Gamma(\Omega)\eqdef \mathset{\omega\in \Omega ~:~ \omega \circ \sigma_n=\omega \text{ for all }n\in \Gamma}.  \]
\end{definition}
Given a finite set $A\subseteq \Z^d$ and $n=(n_1,n_2,\dots,n_d)\in \N^d$, consider the group 
\[ \Gamma_n\eqdef n_1\Z\times n_2 \Z \times \cdots \times n_d\Z \subseteq \Z^d.\]

We observe that as long as $\abs{A\bmod n}=\abs{A}$, elements in $\fix_{\Gamma_n}(\Omega(A))$ correspond bijectively to restricted permutations of the $n_1\times\cdots\times n_d$ discrete torus, in the usual manner. 
We identify $\omega\in \fix_{\Gamma_n}(\Omega(A))$ with the function defined by the restriction of $\omega$ to $[n]$, denoted by $f_\omega$, which is, a restricted permutation of the torus.  That is, $\tilde{f}_\omega\eqdef f_\omega \bmod n$  is a permutation of $[n]$. 

\begin{definition}
The periodic entropy of an SFT $\Omega\subseteq \Sigma^{\Z^d}$ is defined to be 
\[ \topent_p(\Omega)\eqdef \limsup_{n\to \infty}\frac{\log \abs{\fix_{\Gamma_n}(\Omega)}}{\abs{[n]}}. \] 
\end{definition}

\begin{fact} \label{prop:GenPerEnt}
\cite[Proposition 4.1.15, Theorem 4.3.6]{LinMar85}  For an SFT $\Omega\subseteq \Sigma^{Z^d}$, 
\[ \topent_p(\Omega)\leq \topent(\Omega).\]
Furthermore, if $d=1$, equality holds.
\end{fact}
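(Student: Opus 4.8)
The plan is to treat the two assertions separately, since they rest on genuinely different tools.

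For the inequality $\topent_p(\Omega)\leq\topent(\Omega)$, I would exploit that a $\Gamma_n$-periodic point is completely determined by its values on the fundamental domain $[n]$. Every $m\in\Z^d$ decomposes uniquely as $m=k+r$ with $k\in\Gamma_n$ and $r\in[n]$, so $\Gamma_n$-invariance forces $\omega(m)=\omega(r)$; hence the restriction map $\omega\mapsto\omega([n])$ is injective on $\fix_{\Gamma_n}(\Omega)$. Since each such point lies in $\Omega$, its restriction is an admissible pattern, so the map lands inside $B_n(\Omega)$. This yields $\abs{\fix_{\Gamma_n}(\Omega)}\leq\abs{B_n(\Omega)}$ for every $n$; taking logarithms, dividing by $\abs{[n]}$, and passing to the $\limsup$ gives the bound. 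This direction holds in all dimensions and is the routine part.

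For the equality when $d=1$, I would first pass to a canonical model. Any one-dimensional SFT is topologically conjugate, via a higher-block recoding, to the vertex shift $X_A$ of a finite directed graph with $0$--$1$ adjacency matrix $A$. A conjugacy is a shift-commuting homeomorphism, so it preserves topological entropy (Fact~\ref{fact:conj}) and carries period-$n$ points bijectively onto period-$n$ points; hence it preserves periodic entropy as well, and it suffices to treat $X_A$. Here both sides are explicit: the $\Gamma_n=n\Z$-periodic points of $X_A$ are exactly the closed walks of length $n$, so $\abs{\fix_{\Gamma_n}(X_A)}=\operatorname{tr}(A^n)$, while $\topent(X_A)=\log\lambda_A$ with $\lambda_A$ the Perron (spectral-radius) eigenvalue of $A$. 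In view of the first part, it then remains only to prove $\limsup_{n}\tfrac1n\log\operatorname{tr}(A^n)\geq\log\lambda_A$.

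The main obstacle is precisely this last inequality, because $\operatorname{tr}(A^n)=\sum_k\lambda_k^n$ can suffer cancellation — and even vanish — for individual $n$ when $A$ is imprimitive, so one cannot simply read off the growth rate termwise. The resolution is Perron--Frobenius: after restricting to an irreducible component of $A$ attaining the spectral radius, the eigenvalues of maximal modulus are $\lambda_A$ times the $p$-th roots of unity (with $p$ the period of that component), each simple, so along the arithmetic progression $n\in p\Z$ they reinforce rather than cancel and $\operatorname{tr}(A^n)=p\,\lambda_A^{n}(1+o(1))$. Evaluating $\tfrac1n\log\operatorname{tr}(A^n)$ along this subsequence yields $\log\lambda_A$, and since only a $\limsup$ is required, this subsequence suffices. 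Pinching between this lower bound and the upper bound from the first part gives $\topent_p(\Omega)=\topent(\Omega)$ when $d=1$. The absence of an analogous Perron--Frobenius normal form governing the higher-dimensional periodic-point count is exactly why equality is not asserted for $d\geq 2$.
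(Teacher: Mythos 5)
The paper gives no proof of this Fact at all---it is quoted verbatim from Lind--Marcus \cite{LinMar85}---so there is no internal argument to compare against; what you have written is essentially the standard textbook argument lying behind that citation, and it is correct. Your first part (the injection $\omega\mapsto\omega([n])$ of $\fix_{\Gamma_n}(\Omega)$ into $B_n(\Omega)$, giving $\abs{\fix_{\Gamma_n}(\Omega)}\leq\abs{B_n(\Omega)}$ and hence the inequality in every dimension) matches the paper's definitions of $\topent_p$ and $\topent$ exactly, and your second part (higher-block recoding to a vertex shift, $\abs{\fix_{n\Z}(X_A)}=\operatorname{tr}(A^n)$, and Perron--Frobenius to beat the cancellation of $\sum_k\lambda_k^n$ along the subsequence $n\in p\Z$) is precisely the right way to get equality for $d=1$; using only a subsequence is legitimate because $\topent_p$ is defined as a $\limsup$. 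Two small points are worth making explicit. First, you implicitly assume $\Omega\neq\emptyset$ (equivalently $\lambda_A\geq 1$); if the underlying graph has no cycles then $X_A=\emptyset$ and the statement is degenerate, so this hypothesis should be stated. Second, when you restrict to an irreducible component $C^*$ attaining the spectral radius, you should justify that this bounds the full trace from below: every closed walk stays inside a single strongly connected component, so $\operatorname{tr}(A^n)=\sum_{C}\operatorname{tr}(A_C^n)\geq\operatorname{tr}(A_{C^*}^n)$, all summands being nonnegative. With those two remarks added, your proof is complete and self-contained, which is more than the paper itself provides.
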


\begin{remark}
\label{re:1}
 The inequality from Fact~\ref{fact:conj} holds in the more general settings of shift spaces, in any dimension.  While equality holds in the one dimensional case, it can fail badly for general $\Z^d$ shift spaces when $d>1$, since there exists $\Z^d$ shift spaces with positive entropy and no periodic points (see \cite[ Section 9]{HocMey10}).
\end{remark}

Consider $n\in \N^d$ and a permutation $f\in S([n])$, that is, a permutation of the $n_1\times\cdots \times n_d$ array. We observe that $f$ is also a permutation of the $n_1\times\cdots \times n_d$ discrete torus, as $\tilde{f}=f \bmod n$  is a permutation of $[n]$, since $f=\tilde{f}$. Thus, for some finite $A\subseteq \Z^d$, denoting
\[ \Pi_n(A)\eqdef \mathset{\pi \in S([n])~:~\forall m\in [n], \pi(m)-m\in A },\] 
we have that $\Pi_n(A)$ is a subset of permutations of the $[n]$-discrete torus restricted by $A$ and therefore can be identified with a subset of $\fix_{\Gamma_n}$.
We conclude that $\abs{\Pi_n(A)} \leq \abs{\fix_{\Gamma_n}(\Omega(A))}. $
Given a finite  set $A\subseteq \Z^d$ we denote the exponential growth rate of $A$-restricted rectangular permutations by $\topent_c(A)$. That is, 
\[ \topent_c(A)\eqdef \limsup_{n\to\infty} \frac{\log\abs{\Pi_n(A)}}{\abs{[n]}}.\]
We call it the closed entropy of $A$-restricted permutations.
Followed by this observation and Fact~\ref{prop:GenPerEnt}, we have
 \[\topent_c(A) \leq \topent_p(\Omega(A))\leq \topent(\Omega(A)).\]

We now have three entropy-like quantities associated to permutations restricted by a fixed finite subset $A\subseteq \Z^d$: $\topent(\Omega(A)),\topent_p(\Omega(A))$ and $\topent_c(A)$. In the next sections, we will further study the relations between them.


\section{Restricted Permutations and Perfect Matchings}
\label{CHA:PM}
	Let  $G=(V,E)$ be an undirected graph. A perfect matching of $G$ is a subset of edges in which every vertex $v\in V$ is covered by exactly one edge. That is, $M\subseteq{E}$ is a perfect matching of $G$ if for every vertex $v\in V$ there exists a unique edge $e_v \in M$ for which $v\in e_v$. We denote the set of perfect matchings of a graph by $\PM(G)$. The set of perfect matchings of some locally finite graph $G=(V,E)$ can be naturally identified with a subset of $\prod_{v\in V}E(v)$ where $E(v)$ is the set of edges containing $v$. A perfect matching $M\in \PM(G)$ corresponds to a an element $w_M$ where $w_M(v)$ is the unique edge in $M$ which covers $v$. Thus, $\PM(G)$ can be considered as topological space with the topology inherited from the product topology on $\prod_{v\in V}E(v)$ (where $E(v)$ has the discrete topology for all $v\in V$). Since $G$ is locally finite $\prod_{v\in V}E(v)$ is compact and therefore $\PM(G)$ is compact as a closed subset $\prod_{v\in V}E(v)$.

	A weight function on the edges $W:E\to \C$ naturally defines a function on perfect matchings by
\[ W(M)\eqdef \prod_{e\in M }W(e).\] 
The perfect matching polynomial of $G$ with respect to $W$ is defined to be 
\[ \PM(G,W)\eqdef \sum_{M\in \PM(G)} W(M )=\sum_{M\in \PM(G)} \prod_{e\in M} W(e).\] 
We note that for the constant function $W \equiv 1$, $\PM(G,1)$ is just the number of perfect matchings of $G$.

 In \cite{Kas61, Kas63}, Kasteleyn presented an ingenious method for computing the perfect
matching polynomial of finite planar graphs. This method was used by Kasteleyn himself in order to compute the exponential growth rate of the number of perfect matchings of the two-dimensional square lattice. In 2006, Kenyon, Okounkov, and Sheffield \cite{KenOkoShe06} computed the exponential growth rate of perfect matchings of $\Z^2$-periodic bipartite planar graphs. In their work, they were also using Kasteleyn's method.

In this section we show two different characterizations of restricted permutations by perfect matchings (Theorem~\ref{th:GenCan} and  Theorem~\ref{th:PerToPM}). We use the results on perfect matchings of $\Z^2$-periodic bipartite planar graphs in order to compute the topological entropy of restricted permutations in a couple of two-dimensional cases (see Section~\ref{A_L} and Section~\ref{A_+}). We show an application Kasteleyn's method and present a polynomial-time algorithm for computing the exact number of $n \times n$-possible patterns in one specific case. Finally, we show a natural generalization of this algorithm (see Section~\ref{A_L}).

	\subsection{General Correspondence}
	\label{PM_char}
Let $G=(V,E)$ be a directed graph. Consider the undirected graph $G'=(V',E')$ defined by 
\[ V'\eqdef \mathset{I,O}\times V, \ \text{ and }\ E'\eqdef \mathset{\mathset{(O,v),(I,u) } ~:~ (v,u)\in E}. \]
Edges in $G'$ will be used to encode functions from $V$ to $V$ which are restricted by the original graph $G$. An edge of the form $\mathset{(v,O),(u,I)}$ will represent a mapping of $v$ to $u$. We note that $G'$ is always a bipartite graph, that will be important later on.  In Theorem~\ref{th:GenCan} we will show that perfect matchings of $G'$ correspond to restricted permutations of $G$.

Assume that a group $\Gamma$ is acting on $G$ by graph isomorphisms, one can define an action of $\Gamma$ on $G'$ by $\gamma (a,v)=(a,\gamma v)$, where $\ a\in \mathset{I,O}$ and 	$ \ v\in V$.
Clearly, this action is a group action on $G'$ and each element $\gamma\in \Gamma$ acts on $G'$ by graph isomorphism.

\begin{example}
Let $G=G_{A_L}$ be the graph described in Example~\ref{ex:GenToZd}. We recall that $\Omega(G_{A_L})$, also denoted by $\Omega(A_L)$, is the set of $\Z^2$-permutations restricted by the set $A_L\eqdef \mathset {(0,0),(0, 1), (1,0)}$. In that case, the graph $G'$ consists of two copies $\Z^2$ with edges between vertices whose difference are in $A_L$ (see Figure~\ref{fig:CanonL}).
\begin{figure}
 \centering
  \includegraphics[width=70mm, scale=0.5]{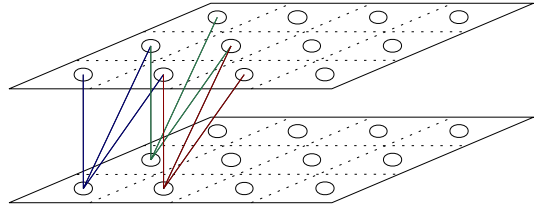}
  \caption{The graph $G_{A_L}'$}
  \label{fig:CanonL}
\end{figure} 
\end{example}

\begin{theorem}
	\label{th:GenCan}
There is a homeomorphism, $\Psi$, between the elements of $\Omega(G)$ and the perfect matchings in $\PM(G')$.
If  a group $\Gamma$ acts on $G$ by graph isomorphisms, then the action of $\Gamma$ on $G'$ induces a group action of $\Gamma$ on $\PM(G')$ such that $(\Omega(G),\Gamma)$ and $(\PM(G'),\Gamma)$ are topologically conjugated and $\Psi$ is a conjugation map. That is, $\Psi$ is an homeomorphism and the following diagram commutes
\[
\begin{tikzcd}
\Omega(G) \arrow[r,"\Gamma"] \arrow[d,swap,"\Psi"] &
 \Omega(G) \arrow[d,"\Psi"] \\
\PM(G')  \arrow[r,"\Gamma"] & \PM(G') 
\end{tikzcd}
\]
\end{theorem}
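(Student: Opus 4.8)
The plan is to define $\Psi$ explicitly, reading a restricted permutation as the perfect matching that pairs each ``output'' copy of a vertex with the ``input'' copy of its image, and then to verify in turn that this map is a well-defined bijection, a homeomorphism, and $\Gamma$-equivariant. Concretely, for $\pi\in\Omega(G)$ I would set
\[ \Psi(\pi) \eqdef \mathset{\mathset{(O,v),(I,\pi(v))} : v\in V}. \]
Since $\pi$ is restricted by $G$ we have $(v,\pi(v))\in E$ for every $v$, so each listed pair is genuinely an edge of $G'$ by the definition of $E'$, and hence $\Psi(\pi)\subseteq E'$. To see it is a perfect matching I would check the two sides of the bipartition separately: each vertex $(O,v)$ lies in exactly one edge of $\Psi(\pi)$ because $\pi$ is a function (there is a single value $\pi(v)$), and each vertex $(I,u)$ lies in exactly one edge because $\pi$ is a bijection (there is a single preimage $\pi^{-1}(u)$). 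Thus $\Psi(\pi)\in\PM(G')$.

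For the bijectivity of $\Psi$ I would build the inverse directly. Given $M\in\PM(G')$, the matching condition on the output side assigns to every $v$ a unique edge $\mathset{(O,v),(I,u)}\in M$; setting $\pi_M(v)\eqdef u$ defines a function $V\to V$, and membership of this edge in $E'$ forces $(v,\pi_M(v))\in E$, so $\pi_M$ is restricted by $G$. The matching condition on the input side says every $(I,u)$ is covered exactly once, which is precisely the assertion that $\pi_M$ is a bijection; hence $\pi_M\in\Omega(G)$. The assignments $\pi\mapsto\Psi(\pi)$ and $M\mapsto\pi_M$ are mutual inverses, since both are governed by the single rule ``$v$ is paired with $\pi(v)$''.

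To upgrade this bijection to a homeomorphism I would show $\Psi$ is continuous and then invoke compactness. A subbasic open set of $\PM(G')$ fixes the edge covering one vertex; its preimage under $\Psi$ has the form $\mathset{\pi:\pi(v)=u}$ (for an output vertex $(O,v)$ this is immediate, and for an input vertex $(I,u)$ it is the equivalent condition determined by the covering edge), and such sets are open cylinders in $\Omega(G)$. Hence $\Psi$ is continuous; since $\Omega(G)$ is compact and $\PM(G')$ is Hausdorff (both are subspaces of products of discrete spaces), a continuous bijection is automatically a homeomorphism. Alternatively one checks directly that $\Psi^{-1}$ pulls the cylinder $\mathset{\pi:\pi(v)=u}$ back to the open set of matchings whose edge at $(O,v)$ is $\mathset{(O,v),(I,u)}$.

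Finally, for equivariance I would recall that $\gamma\in\Gamma$ acts on $\Omega(G)$ by conjugation $\gamma(\pi)=\gamma\circ\pi\circ\gamma^{-1}$ and on $G'$ by $\gamma(a,v)=(a,\gamma v)$, which induces $\gamma\cdot M=\mathset{\gamma e : e\in M}$ on matchings. A short reindexing closes the argument: applying $\gamma$ edgewise to $\Psi(\pi)$ and substituting $w=\gamma v$ turns the edge $\mathset{(O,v),(I,\pi(v))}$ into $\mathset{(O,w),(I,(\gamma\pi\gamma^{-1})(w))}$, so $\gamma\cdot\Psi(\pi)=\Psi(\gamma\circ\pi\circ\gamma^{-1})=\Psi(\gamma(\pi))$ and the square commutes. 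I expect no genuine obstacle here, only bookkeeping: keeping the $I/O$ labels and the $\gamma^{-1}$ in the conjugation straight, and being precise that the matching condition on the input side encodes \emph{bijectivity} of $\pi$ and not merely surjectivity, which is the one place where the permutation hypothesis (as opposed to an arbitrary restricted function) is actually used.
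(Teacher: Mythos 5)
Your proposal is correct and follows essentially the same route as the paper's proof: the same explicit definition of $\Psi$, the same verification that the output side of the matching encodes functionality while the input side encodes bijectivity, the same inverse construction, and the same reindexing computation for $\Gamma$-equivariance. The only difference is presentational — you justify the homeomorphism via continuity plus the compact-to-Hausdorff argument (or a direct check on cylinders), where the paper simply notes that $\Psi$ and $\Psi^{-1}$ are continuous because they are defined locally.
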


\begin{proof}
Consider the function $\Psi:\Omega(G)\to 2^{E'}$ defined by
\[ \Psi(\pi)\eqdef \mathset{\mathset{(O,v),(I,\pi(v))}~:~v\in V}.\]
Since $\pi$ is restricted by $G$, for any $v\in V$, $(v,\pi(v))\in E$. Thus, by the definition of $E'$, $\mathset{(O,v),(I,\pi(v))}\in E'$. This shows that $\Psi(\pi)\subseteq E'$.

We now show that  $\Psi(\pi)$ is a perfect matching of $G'$. Let $x$ be a vertex in $V'$. If $x$ is of the form $(O,v)$, $v\in V$, by the definition of $\Psi(\pi)$, $\mathset{(O,v),(I,\pi(v))}\in E'$ is the unique edge in $\Psi(\pi)$ containing $(O,v)$. If $x$ is of the form $(I,u)$, $u\in V$, we have that $\mathset{(O,\pi^{-1}(u)),(I,u)}\in E'$ is the unique edge which cover $(I,u)$, as $\pi$ is a bijection. 

For a perfect matching $M\in \PM(G')$ and $v\in V$ let $M(v)\in V$ be the unique vertex such that $\mathset{(O,v),(I,M(v))}\in M$. Consider the function $\Phi:\PM(G')\to \Omega(G)$ defined by \[ \Phi(M)(v)=M(v).\] It is easy to verify that $\Phi$ is well defined and that it is the inverse function of $\Psi$. Thus, $\Psi$ is a bijection. We note that $\Psi$ and $\Psi^{-1}$ are continuous as they are defined locally. Hence, $\Psi$ is a homeomorphism.

For the second part of the proof, let $\Gamma$ be a group, acting on $G$ by graph isomorphisms. 
For any $\gamma\in \Gamma$, the isomorphic action of $\gamma$ on $G'$ defines a map $\gamma:E'\to E'$ by 
\[\gamma(\mathset{(O,v),(I,u)})\eqdef \mathset{\gamma(O,v),\gamma(I,u)}\in E'.\]
This function maps perfect matchings of $G'$ to other perfect matchings of $G'$ as $\gamma$ acts on $G$ by graph isomorphism. It remains to show that the diagram commutes. Indeed, 
\begin{align*}
\Psi(\gamma(\pi))
&=\Psi(\gamma\circ \pi \circ \gamma^{-1} ))\\
&=\mathset{\mathset{(O,v),(I,\gamma(\pi(\gamma^{-1}v)))}~:~v\in V}\\
&=\mathset{\gamma\parenv{\mathset{(O,\gamma^{-1}v),(I,\pi(\gamma^{-1}v))}}~:~v\in V}\\
&=\gamma\parenv{\mathset{\mathset{(O,u),(I,\pi(u))}~:~u\in \gamma^{-1}V}}\\
\underset{\gamma^{-1}V=V}{\Rsh} &=\gamma \underset{\Psi(\pi)}{\underbrace{\parenv{\mathset{\mathset{(O,u),(I,\pi(u))}~:~u\in V}}}}=\gamma(\Psi(\pi)).
\end{align*}

\end{proof}

	\subsubsection{Permutations of $\Z^2$ Restricted by $A_L$ }
		\label{A_L}
 Permutations of $\Z^2$ restricted by the set $A_L=\mathset{(0,0),(0,1),(1,0)}$ were first studied by Schmidt and Strasser in \cite{SchStr17}. They proved that the topological entropy and the periodic entropy are equal in that case and speculated that it is around $\log(1.38)$ (which appears to be quite close to the exact value). We will show a connection between permutations of $\Z^2$ restricted by $A_L$ and perfect matchings of the honeycomb lattice and derive an exact expression for the topological entropy (and periodic entropy) of $\Omega(A_L)$. In the second part we describe a polynomial-time algorithm for computing the exact number of patterns in $B_{n,m}(A_L)$, and discuss a natural generalization of this algorithm.

By Theorem~\ref{th:GenCan}, we can  (bijectively) encode elements from $\Omega(A_L)$ by perfect matchings of the graph $G_{A_L}'$. If we draw the $G_{A_L}'$ on the plane, we may see that it is in fact the well known honeycomb lattice, $L_H$, which is a $\Z^2$-periodic bipartite planar graph (see Figure~\ref{fig:Honey_basic} (different colors of vertices represent the two disjoint and independent sets). By this, we mean that it can be embedded in the plane so that translations of the fundamental domain in $\Z^2$ act by color-preserving isomorphisms of $L_{H}$ -- isomorphisms which map black vertices to black vertices and white to white.

For $n\in \N$, let $L_{H,n}$ be the quotient of $L_{H}$ by the action of $n\Z^2$, which is a finite bipartite on the $n\times n$ torus (see Figure~\ref{fig:Honey_quotient}). A perfect matching of $L_{H,n}$ corresponds to a permutation of the $n\times n$ torus, restricted by $A$, in the same manner as in Theorem~\ref{th:GenCan}. Thus, 
\[ \abs{\fix_{n\Z^2}(\Omega(A_L))}=\abs{\PM(L_{H,n})}.\]

Kenyon, Okounkov and Sheffield \cite{KenOkoShe06} found an exact expression for the exponential growth rate of the number of toral perfect matchings of $\Z^2$-periodic bipartite planar graph. We use the following result which is a direct application of their work.
\begin{figure}
 \centering
  \includegraphics[width=70mm, scale=0.25]{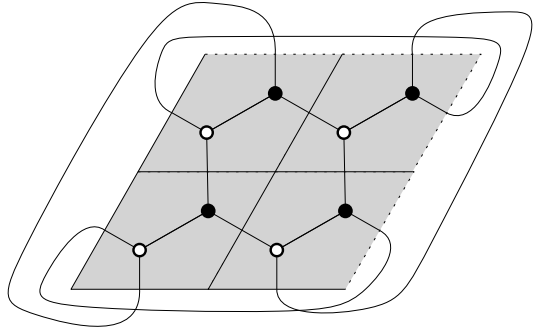}
  \caption{The quotient of $L_{H}$ by the action of $(2 \Z)^2$ }
  \label{fig:Honey_quotient}
\end{figure}
\begin{proposition} \cite{KenOkoShe06, Ken00}
\label{prop:HoneyEnt}
\[ \lim_{n\to \infty} \frac{\log\abs{\PM(L_{H,n})}}{n^2}=\frac{1}{4\pi^2}\intop_0^{2\pi}\intop_0^{2\pi}\log\abs{1+e^{ix}+e^{iy}}dxdy.\] 
\end{proposition}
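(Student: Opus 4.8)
The plan is to specialize the Kenyon--Okounkov--Sheffield (KOS) theory of dimers on $\Z^2$-periodic bipartite planar graphs \cite{KenOkoShe06} to the honeycomb lattice $L_H$ and read off its characteristic (spectral) polynomial. The quantity $\lim_{n\to\infty} n^{-2}\log\abs{\PM(L_{H,n})}$ is precisely the free energy per fundamental domain studied there, and the main theorem of \cite{KenOkoShe06} identifies it with the Mahler measure of that polynomial; so the whole task reduces to computing one $1\times 1$ Kasteleyn matrix and then quoting their asymptotics.

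First I would fix the Kasteleyn data. As recorded in Example~\ref{ex:Honeycomb}, $L_H$ is bipartite and $\Z^2$-periodic; choose a fundamental domain containing a single white vertex $w_0$ and a single black vertex $b_0$. In the coordinates given by the two fundamental-domain translations, $w_0$ is adjacent exactly to $b_0$ and to the two translates $b_0+(1,0)$ and $b_0+(0,1)$. With unit edge weights and a Kasteleyn orientation---which on a fundamental domain carrying one edge of each of the three types can be chosen so that all three signs are $+1$---the associated $1\times 1$ Kasteleyn matrix with Bloch parameters $(z,w)$ is $K(z,w)=1+z+w$, so the characteristic polynomial is $P(z,w)=\det K(z,w)=1+z+w$.

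Next I would invoke the KOS asymptotics. For the $n\times n$ toral quotient $L_{H,n}$, the number $\abs{\PM(L_{H,n})}$ is expressed as a signed combination $\tfrac12\sum \pm\det K_{\theta\tau}$ of four determinants indexed by the spin structures $\theta,\tau\in\mathset{0,1}$, each determinant factoring as a product of $P(z,w)$ over the $n^2$ Bloch frequencies $(z,w)$ running through the appropriate (possibly half-shifted) $n$-th roots of unity. Taking logarithms, dividing by $n^2$, and letting $n\to\infty$ turns each such product into a Riemann sum converging to $\frac{1}{4\pi^2}\intop_0^{2\pi}\intop_0^{2\pi}\log\abs{P(e^{i\theta},e^{i\phi})}\,d\theta\,d\phi$. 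Since \cite{KenOkoShe06} show that all four terms share this common exponential growth rate and that the signed combination does not reduce it, we obtain
\[ \lim_{n\to\infty}\frac{\log\abs{\PM(L_{H,n})}}{n^2}=\frac{1}{4\pi^2}\intop_0^{2\pi}\intop_0^{2\pi}\log\abs{1+e^{i\theta}+e^{i\phi}}\,d\theta\,d\phi, \]
which is the asserted identity after renaming the integration variables $(\theta,\phi)$ to $(x,y)$.

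The main obstacle is the criticality of this particular model: the spectral curve $1+z+w=0$ meets the unit torus $\abs{z}=\abs{w}=1$ (the honeycomb dimer model is in its ``liquid'' phase), so the integrand $\log\abs{P}$ is singular on a one-dimensional subset of the torus. One must therefore check both that the limiting integral still converges---it does, the singularity being only logarithmic, hence integrable against the one-dimensional zero locus---and that this boundary singularity neither invalidates the Riemann-sum approximation over roots of unity nor the identification of the growth rate with the Mahler measure. These analytic points are exactly what is established in Kenyon's direct honeycomb computation \cite{Ken00} and in the general framework of \cite{KenOkoShe06}; consequently the proposition follows by quoting their results, the only model-specific input being the identification $P(z,w)=1+z+w$ carried out above.
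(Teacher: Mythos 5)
Your proposal is correct and follows essentially the same route as the paper: the paper states Proposition~\ref{prop:HoneyEnt} purely as a citation of \cite{KenOkoShe06} and \cite{Ken00}, treating it as a direct application of their toral dimer asymptotics to the honeycomb lattice. The only model-specific input---identifying the spectral polynomial $P(z,w)=1+z+w$ via a $1\times 1$ Kasteleyn matrix with all-plus signs (valid since hexagonal faces satisfy the Kasteleyn sign condition)---is exactly what you carry out, so your write-up simply makes explicit the specialization that the paper leaves implicit.
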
 
The connection between the periodic entropy and the topological entropy of $\Omega(A_L)$ was investigated by Schmidt and Strasser in their first work on restricted movement. They  proved the following proposition:
\begin{proposition} \cite{SchStr17}
\label{prop:PerFreeL}
\[ \limsup_{n\to \infty}\frac{\abs{\fix_{n\Z^2}(\Omega(A_L))}}{n^2}=\lim_{n\to \infty}\frac{\abs{\fix_{4n^3\Z^2}(\Omega(A_L))}}{(4n^3)^2}=\topent(\Omega(A_L)).\]
\end{proposition}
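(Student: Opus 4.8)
The inequality $\topent_p(\Omega(A_L))\le \topent(\Omega(A_L))$ is Fact~\ref{prop:GenPerEnt}; concretely, the restriction of an $N\Z^2$-periodic point to a fundamental domain is an $N\times N$ pattern of $\Omega(A_L)$, and distinct periodic points give distinct patterns, so $\abs{\fix_{N\Z^2}(\Omega(A_L))}\le \abs{B_{N,N}(\Omega(A_L))}$, whence $\limsup_N \frac{1}{N^2}\log\abs{\fix_{N\Z^2}(\Omega(A_L))}\le \lim_N \frac{1}{N^2}\log\abs{B_{N,N}(\Omega(A_L))}=\topent(\Omega(A_L))$. In particular the limit superior along the subsequence $N=4n^3$ is also at most $\topent(\Omega(A_L))$, so it suffices to prove the matching lower bound $\liminf_{n\to\infty}\frac{1}{N^2}\log\abs{\fix_{N\Z^2}(\Omega(A_L))}\ge \topent(\Omega(A_L))$ for $N=4n^3$. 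Granting this, the $4n^3$-subsequence has a genuine limit equal to $\topent(\Omega(A_L))$; and since the full sequence has $\limsup \le \topent(\Omega(A_L))$ while its $4n^3$-subsequence already attains $\topent(\Omega(A_L))$, the $\limsup$ over all $n$ equals $\topent(\Omega(A_L))$ as well.

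For the lower bound I would work in the non-crossing path picture of Example~\ref{ex:GenToZd}: a configuration of $\Omega(A_L)$ is a family of pairwise disjoint bi-infinite lattice paths that step only north or east, the remaining sites being fixed points. The plan is to manufacture at least $e^{(\topent(\Omega(A_L))-\varepsilon)N^2}$ distinct $N$-periodic points by tiling the $N\times N$ torus with a grid of free blocks separated by thin buffer strips: into each block one inserts, independently of the others, a pattern drawn from a large family of near-entropy-optimal patterns, while the buffers reconcile the paths of neighbouring blocks and close them up around the torus. If the free blocks have side $m$, the buffers width $w=o(m)$, and each block admits $\approx e^{\topent(\Omega(A_L))\,m^2}$ independent fillings, then the periodic points produced have per-site exponent $\topent(\Omega(A_L))\,(m/(m+w))^2\to \topent(\Omega(A_L))$, which is exactly what is needed.

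The hard part is the gluing, because the north/east restriction makes $\Omega(A_L)$ far from freely gluable. Along every path the row coordinate is non-decreasing in the eastward direction, so in a buffer a path may be raised but never lowered, and on the torus the net rise accumulated by a path winding once around the horizontal direction must be an exact multiple of $N$. This height function --- equivalently, flux or slope --- is a conserved, genuinely two-dimensional quantity, and it is precisely the type of obstruction that permits periodic and topological entropy to differ for general $\Z^2$ shifts (Remark~\ref{re:1}). To neutralize it I would restrict the free family in each block to patterns carrying one common near-optimal boundary slope, so that neighbouring blocks are automatically compatible and all winding numbers can be fixed once and for all by a single periodic skeleton threaded through the buffers; the input that makes this harmless is that the entropy-maximizing slope of the model is unique, so that conditioning on it does not lower the per-site entropy density in the limit. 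The two points requiring genuine work are (a) proving this entropy-preservation under slope-conditioning, so that the near-optimal family in each block is still of size $e^{(\topent(\Omega(A_L))-o(1))m^2}$, and (b) bounding the buffer width needed to realize the fixed slope on the torus: making the buffers asymptotically negligible while the global slope is rendered torus-compatible is what forces $N$ to grow like $4n^3$ rather than like $n$, the cubic growth reflecting the critical, polynomially mixing nature of this model. Equivalently, in transfer-matrix language this is the statement that $\mathrm{tr}(T_N^{\,N})$ stays within a sub-$N^2$-exponential factor of $\lambda_{\max}(T_N)^{N}$, which is delicate precisely because the strip transfer matrix $T_N$ has a vanishing spectral gap.

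Granting the construction, for each $n$ one obtains $\abs{\fix_{4n^3\Z^2}(\Omega(A_L))}\ge e^{(\topent(\Omega(A_L))-\varepsilon_n)(4n^3)^2}$ with $\varepsilon_n\to0$, hence the required $\liminf$ bound. Together with the upper bound of the first paragraph this produces the displayed limit along $4n^3$, and the squeeze then yields the $\limsup$ identity and its equality with $\topent(\Omega(A_L))$, completing the proof.
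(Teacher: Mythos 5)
The upper bound half of your argument (periodic points inject into square patterns, hence $\limsup\le \topent(\Omega(A_L))$) is correct but routine; the entire content of the proposition is the lower bound, and there your text is a plan rather than a proof. You yourself flag the two decisive steps as ``requiring genuine work'': (a) that conditioning block patterns on a common near-optimal slope does not lower the entropy density, and (b) that buffers of negligible width can reconcile the conserved height/flux around the torus. Neither is carried out, and neither is elementary --- (a) is essentially the variational principle / uniqueness of the entropy-maximizing slope for this height model (Cohn--Kenyon--Propp-type input), which you invoke but do not prove or reduce to a citable statement, and (b) is exactly where a naive gluing argument for $\Omega(A_L)$ breaks down, since the north/east constraint makes the winding numbers rigid. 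The transfer-matrix assertion that $\mathrm{tr}(T_N^N)$ is within a sub-exponential factor of $\lambda_{\max}(T_N)^N$ despite a vanishing spectral gap is likewise asserted, not established. Finally, the specific sequence $N=4n^3$ enters your sketch only through a heuristic (``polynomially mixing nature of the model''), whereas any actual proof must produce it from the construction; as written, nothing in your argument distinguishes $4n^3$ from, say, $n^2$ or $n$.

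For context: the paper does not prove this proposition at all --- it is quoted from Schmidt and Strasser \cite{SchStr17}, and the remark following it in the paper explains that their proof builds periodic points by \emph{reflecting} polygonal path patterns, an argument in the spirit of reflection positivity (as in \cite{MeyCha19} for the dimer model). Reflection sidesteps both of your hard points at once: reflecting a globally admissible pattern across the boundary of a box automatically produces a compatible, torus-closable configuration without any slope bookkeeping, and the factor $4n^3$ emerges naturally from iterating the reflections. If you want to pursue your slope-conditioning route, it can in principle be made to work via the dimer variational principle, but that is a substantially heavier machine than what the cited proof uses, and in its present form your argument has a genuine gap at precisely the steps that matter.
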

\begin{remark} 
The proof of Proposition~\ref{prop:PerFreeL} presented in \cite{SchStr17} by Schmidt and Strasser involves arguments regarding forming periodic points using reflections of polygonal patterns. Although using different machinery, the idea behind their proof is conceptually similar to the principle of reflection positivity, used by Meyerovitch and Chandgotia in \cite{MeyCha19} in order to explain that the topological entropy and the periodic entropy of the square lattice dimer model are equal. This suggests that the principle of reflection positivity may be used in order to prove that periodic entropy and topological entropy are equal in the more general case of perfect matchings of bipartite planar $\Z^2$-periodic graphs. 
\end{remark} 
We combine the results presented above with the observation about the correspondence between perfect matchings of the honeycomb lattice to $A_L$-restricted permutations to obtain:
\begin{theorem}
\label{th:SolL}
\[ \lim_{n\to\infty}\frac{\log\abs{\fix_{n\Z^2}(\Omega(A_L))}}{n^2}  =\topent_p(\Omega(A_L))=\topent(\Omega(A_L))=\frac{1}{4\pi^2}\intop_0^{2\pi}\intop_0^{2\pi}\log\abs{1+e^{ix}+e^{iy}}dxdy.\] 
\end{theorem}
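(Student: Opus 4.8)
The plan is to assemble the statement from three ingredients that are already in place: the combinatorial correspondence of Theorem~\ref{th:GenCan}, the Kenyon--Okounkov--Sheffield evaluation in Proposition~\ref{prop:HoneyEnt}, and the Schmidt--Strasser identity in Proposition~\ref{prop:PerFreeL}. No analytic work is redone; the proof is a matter of chaining these results and carefully tracking which limits are genuine limits and which are merely limsups.

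First I would record the counting identity. By Theorem~\ref{th:GenCan} applied to $G=G_{A_L}$, together with the observation that $G_{A_L}'$ is an embedding of the honeycomb lattice $L_H$ and that the induced $\Z^2$-action is exactly translation of the fundamental domain, a point of $\fix_{n\Z^2}(\Omega(A_L))$ corresponds to a perfect matching of the quotient $L_{H,n}=L_H/n\Z^2$. (For $n\geq 2$ one has $\abs{A_L\bmod n}=\abs{A_L}=3$, so the quotient identification is clean and the asymptotics are unaffected.) Hence $\abs{\fix_{n\Z^2}(\Omega(A_L))}=\abs{\PM(L_{H,n})}$ for all large $n$, and Proposition~\ref{prop:HoneyEnt} then gives that the limit along square tori exists and equals the stated double integral,
\[ \lim_{n\to\infty}\frac{\log\abs{\fix_{n\Z^2}(\Omega(A_L))}}{n^2}=\frac{1}{4\pi^2}\intop_0^{2\pi}\intop_0^{2\pi}\log\abs{1+e^{ix}+e^{iy}}dxdy, \]
which settles the leftmost equality.

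Next, for the topological entropy I would invoke Proposition~\ref{prop:PerFreeL}, which identifies $\topent(\Omega(A_L))$ with the limsup of $\log\abs{\fix_{n\Z^2}(\Omega(A_L))}/n^2$ along square tori. Since the previous step shows this sequence actually converges, its limsup coincides with its limit, so $\topent(\Omega(A_L))$ equals the same integral. For the periodic entropy I would then use a sandwiching argument: by Fact~\ref{prop:GenPerEnt}, $\topent_p(\Omega(A_L))\leq \topent(\Omega(A_L))$; on the other hand, the square sublattices $\Gamma_{(n,n)}=n\Z\times n\Z$ form a subfamily of the rectangular lattices defining $\topent_p$, so the limit computed in the first step is at most $\topent_p(\Omega(A_L))$, and that limit already equals $\topent(\Omega(A_L))$. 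Combining, $\topent(\Omega(A_L))\leq \topent_p(\Omega(A_L))\leq \topent(\Omega(A_L))$, forcing equality throughout, so all three quantities equal the double integral.

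The only real subtlety — a point of bookkeeping rather than a genuine difficulty — is the interplay between the honest limit supplied by Kenyon--Okounkov--Sheffield along the diagonal $n\mapsto(n,n)$, the limsup over all rectangular shapes defining $\topent_p$, and the limsup over square shapes appearing in Schmidt--Strasser's Proposition~\ref{prop:PerFreeL}. The argument hinges precisely on the fact that convergence along the square subfamily upgrades these limsups to limits and lets the sandwich close. Everything else — the planarity and $\Z^2$-periodicity of $L_H$ underlying Proposition~\ref{prop:HoneyEnt}, and the reflection-type construction behind Proposition~\ref{prop:PerFreeL} — is imported wholesale from the cited results.
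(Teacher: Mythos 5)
Your proposal is correct and follows essentially the same route as the paper: the identification $\abs{\fix_{n\Z^2}(\Omega(A_L))}=\abs{\PM(L_{H,n})}$ from Theorem~\ref{th:GenCan}, the Kenyon--Okounkov--Sheffield limit of Proposition~\ref{prop:HoneyEnt}, Proposition~\ref{prop:PerFreeL}, and the sandwich with Fact~\ref{prop:GenPerEnt}. The only (immaterial) difference is bookkeeping: the paper routes the sandwich through the $4n^3\Z^2$ subsequence clause of Proposition~\ref{prop:PerFreeL}, while you use its square-tori limsup clause upgraded to a limit by the convergence you established first.
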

\begin{proof}
From Fact~\ref{prop:GenPerEnt}, we know that $\topent_p(\Omega(A))\leq \topent(\Omega(A))$. 
On the other hand, by Proposition~\ref{prop:PerFreeL},
\begin{align*}
 \topent_p(\Omega(A))&=\limsup_{n_1,n_2\to\infty}\frac{\log\abs{\fix_{\Gamma_{(n_1,n_2)}}(\Omega(A))}}{n_1 n_2}\\
 &\geq \lim_{n\to\infty} \frac{\log\abs{\fix_{4n^3\Z^2}(\Omega(A))}}{(4n^3)^2}=\topent(\Omega(A)).
\end{align*}
This shows that 
\[\topent(\Omega(A))=\topent_p(\Omega(A))=\lim_{n\to \infty}\frac{\abs{\fix_{4n^3\Z^2}(\Omega(A))}}{(4n^3)^2}. \] 
Using Proposition~\ref{prop:HoneyEnt} and the equivalence between perfect matchings of $L_{H,n}$ and periodic restricted permutations, we conclude
\begin{align*}
\frac{1}{4\pi^2}\intop_0^{2\pi}\intop_0^{2\pi}\log\abs{1+e^{ix}+e^{iy}}dxdy&=\lim_{n\to \infty}\frac{\log\abs{\PM(L_{H,n})}}{n^2}\\
&=\lim_{n\to \infty}\frac{\log\abs{\fix_{n\Z^2}(\Omega(A))}}{n^2}\\
&=\lim_{n\to \infty}\frac{\log \abs{\fix_{4n^3\Z^2}(\Omega(A))}}{(4n^3)^2}=\topent_p(\Omega(A)).
\end{align*}
This completes the proof of the theorem.
\end{proof}
\begin{remark}
Theorem~\ref{th:SolL} provides a complete solution to the question raised in the work by Schmidt and Strasser \cite{SchStr17}, whether it is true that $ \lim_{n\to\infty} \frac{\log\abs{\fix_{n\Z^2}(\Omega(A))}}{n^2}$ exists (and is equal to $\topent(\Omega(A))$).
\end{remark}

\begin{remark}
Theorem \ref{th:SolL} verifies the numerical based guess for the value of $\topent(\Omega(A_L))$ suggested by Schmidt and Strasser \cite{SchStr17}. According to their numerical calculation, $\topent(\Omega(A_L))\approx \log(1.38)=0.322083\dots$, which is quite close to the exact value $\topent(\Omega(A_L))=0.323066\dots $, given by the double integral in Theorem \ref{th:SolL}.
\end{remark}

We saw that there exists a natural correspondence between permutations restricted by $A_L$ and perfect matchings of the honeycomb lattice. Unfortunately, this correspondence does not translate to a matching between elements in $B_{n,m}(A_L)$  and perfect matching of finite sub-graphs of the honeycomb lattice. Therefore, we cannot use Kasteleyn's method for counting perfect matchings in order to compute $\abs{B_{n,m}(A_L)}$. However, we show that patterns from $B_{n,m}(A_L)$ correspond to objects which we call perfect covers, that may be counted in polynomial time.

\begin{definition}
Let $G=(V,E)$ be an undirected graph and $\hat{V}\subseteq V$ be a subset of vertices. A set of edges $C\subseteq E$ is said to be a perfect cover of $\hat{V}$ if the following are satisfied:
\begin{itemize}
\item Any vertex $v\in \hat{V}$ has an edge $e_v\in C$ such that $v\in e_v$.
\item No two different edges in $C$ share a vertex.
\item For any edge $e \in C$, the intersection $e\cap \hat{V}$ is non-empty.  
\end{itemize} 
Denote the set of all perfect covers of $\hat{V}$ in $G$ by $\PC(\hat{V},G)$. 
\end{definition}

In Section~\ref{LocGlob} we show that locally admissible rectangular patterns in $\Omega(A_L)$ are also globally admissible. That is, the elements in $B_{n,m}(A_L)$ are injective functions $[n]\times [m] \to [n]\times [m]+A_L$ such that the inner square $[1,n]\times[1,m]$ is contained in their image.

Let $\pi\in \Omega(A_L)$. In the corresponding perfect matching of  $G_{A_L}'$, the edges connected to vertices in the set $\mathset{(O,k) ~:~ k\in [n]\times [m]}$ determine the restriction of $\pi$ to the rectangle $[n]\times [m]$. Thus, similarly to the proof of Theorem~\ref{th:GenCan}, perfect covers of the set \[ \hat{V}_{n,m}\eqdef \parenv{ \mathset{O}\times([n]\times [m])} \bigcup \parenv{\mathset{I}\times([1,n] \times [1,m])}\]
has a bijective correspondence with injective functions  $[n]\times [m] \to [n]\times [m]+A_L$ such that the inner square $[1,n]\times[1,m]$ is contained in their image (see Figure~\ref{fig:PerToPCnew}). Therefore, we have
 \[ \abs{\PC(\hat{V}_{n,m},G_{A_L})}=\abs{B_{n,m}(A_L)}.\]

\begin{figure}
 \centering
  \includegraphics[width=130mm, scale=0.5]{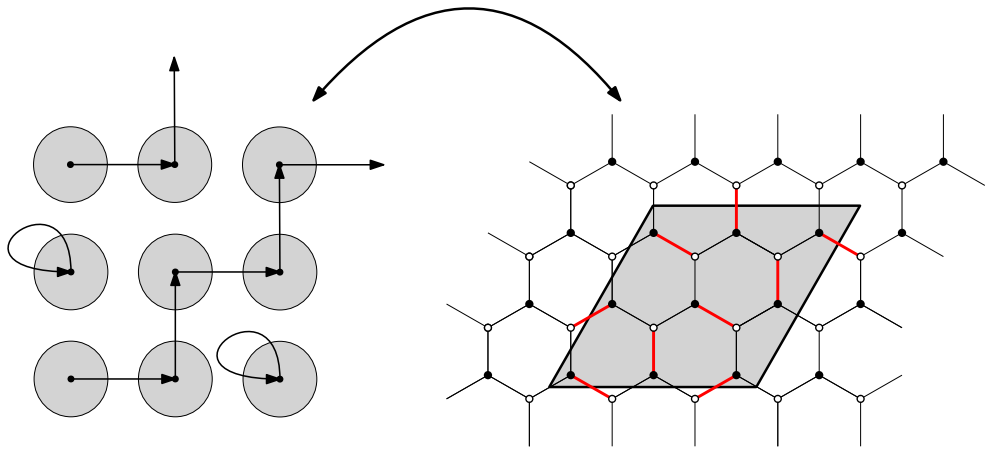}
  \caption{A correspondence between a function in $B_{3,3}(A_L)$ and a perfect cover of $\hat{V}_{3,3}$ in $G_{A_L}$. }
  \label{fig:PerToPCnew}
\end{figure}

\begin{proposition} \cite{Eli19}
\label{prop:GenAlg}
Let $G=(V,E)$ be a locally finite planar graph and $\hat{V}\subseteq V$ be a finite subset of vertices with even size that can be separated from $V\setminus \hat{V}$ by a simply connected domain (in some planar representation of $G$). Then, there exists a polynomial time algorithm for computing $\abs{\PC(\hat{V},G)}$.
\end{proposition}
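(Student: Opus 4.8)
The plan is to reduce the counting of perfect covers to the counting of perfect matchings of an auxiliary \emph{finite} planar graph, and then invoke Kasteleyn's method, which computes the number of perfect matchings of a finite planar graph in polynomial time via a Pfaffian. First I would pass to a finite graph. Since $G$ is locally finite and $\hat{V}$ is finite, the set $N(\hat{V})$ of vertices of $V\setminus \hat{V}$ adjacent to $\hat{V}$ is finite, and by the third defining property of a perfect cover every edge that can appear in a perfect cover meets $\hat{V}$, hence lies in the finite induced subgraph $G_0\eqdef G[\hat{V}\cup N(\hat{V})]$. Thus $\PC(\hat{V},G)=\PC(\hat{V},G_0)$, and it suffices to work with $G_0$. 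The hypothesis that $\hat{V}$ is separated from $V\setminus \hat{V}$ by a simply connected domain $D$ enters here essentially: it forces all of $N(\hat{V})$ to lie on the boundary of $D$, hence on a single face of a planar embedding of $G_0$. I would record this single-face property, since it is what keeps the gadget below planar.

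Next I would reformulate a perfect cover of $\hat{V}$ in $G_0$ as a matching of $G_0$ that saturates every vertex of $\hat{V}$ while leaving the vertices of $N(\hat{V})$ free to be matched or not; that is, a monomer--dimer configuration whose monomers are confined to the single boundary face. A parity remark is useful: since $\abs{\hat{V}}$ is even and every edge with both endpoints in $\hat{V}$ covers two of its vertices, the number of vertices of $N(\hat{V})$ actually used has the same parity as $\abs{\hat{V}}$, so the set of unused (monomer) vertices ranges over subsets of $N(\hat{V})$ of one fixed parity.

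The core step is to build an auxiliary finite planar graph $\tilde{G}$ with $\abs{\PM(\tilde{G})}=\abs{\PC(\hat{V},G_0)}$. I would attach, inside the single face carrying $N(\hat{V})$, a planar ``absorbing'' gadget on the vertices of $N(\hat{V})$ (taken in their cyclic boundary order) whose perfect matchings encode exactly the admissible monomer patterns: each perfect matching of $\tilde{G}$ restricts to a $\hat{V}$-saturating matching of $G_0$ together with a gadget-matching absorbing precisely the unused vertices of $N(\hat{V})$, and conversely, with multiplicity one. Planarity of $\tilde{G}$ is guaranteed because $N(\hat{V})$ lies on a single face, so the dummy edges of the gadget can be drawn in the exterior region without crossings. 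With $\tilde{G}$ in hand I would apply Kasteleyn's method: fix a Pfaffian orientation of the planar graph $\tilde{G}$, which exists and is computable in polynomial time, and evaluate $\abs{\Pf(K)}$ for the associated signed skew-symmetric matrix $K$; this equals $\abs{\PM(\tilde{G})}$ and is computable in polynomial time as a determinant, yielding $\abs{\PC(\hat{V},G)}$.

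The main obstacle is the construction of the absorbing gadget. Planarity is handled by the single-face property, but arranging that the gadget's perfect matchings are in exact, multiplicity-one correspondence with the fixed-parity monomer subsets of $N(\hat{V})$ is delicate: a single local dummy per optional vertex fails on parity grounds, so the gadget must couple the boundary vertices globally, in the spirit of a matchgate built along the boundary cycle. Verifying that such a planar gadget realizes the correct boundary signature, and hence that the reduction is faithful, is the technical heart of the argument; everything after it is a direct appeal to Kasteleyn's polynomial-time Pfaffian computation.
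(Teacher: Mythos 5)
Your overall architecture --- pass to the finite induced subgraph $G_0=G[\hat{V}\cup N(\hat{V})]$, use the separating simply connected domain to place the outside neighbors in cyclic order on a single face, view perfect covers as matchings saturating $\hat{V}$ with monomers confined to that face, absorb the monomers with a planar gadget, and finish with Kasteleyn's Pfaffian method --- is exactly the architecture of the paper's proof. But your write-up has a genuine gap, and you name it yourself: the absorbing gadget is never constructed. You correctly observe that a single local dummy per boundary vertex fails on parity grounds and that the boundary vertices must be coupled globally, but you then merely assert that a planar matchgate ``realizing the correct boundary signature'' exists, with multiplicity one. That existence claim is precisely the technical content of the proposition; without an explicit gadget and a verification of the correspondence, nothing has been proved.

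The paper closes this gap with a construction that is one idea away from yours: it drops the demand for an \emph{unweighted, multiplicity-one} correspondence and instead works with the weighted matching polynomial $\PM(\hat{G},W)$, which Kasteleyn's method computes for finite planar graphs just as easily as the unweighted count. Concretely, each boundary vertex $s_i$ is replaced by a copy $T_i$ of $K_4$; the vertex $T_{i,1}$ inherits the edges from $s_i$ into $\hat{V}$; consecutive gadgets are chained by edges between $T_{i,2}$ and $T_{i+1,3}$ (this chain is what couples the boundary globally and resolves the parity obstruction you identified); and the innermost gadget edges receive weight $\frac{1}{3}$, so that the several internal matchings of a gadget whose boundary vertex goes unused contribute total \emph{weight} one rather than multiplicity one. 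One then verifies that $\PM(\hat{G},W)$ equals the number of perfect covers containing an even number of edges meeting $V\setminus\hat{V}$, and since $\abs{\hat{V}}$ is even, every perfect cover has this property --- this is where your (correct) parity remark enters. Allowing weights is what dissolves the ``delicate'' signature-realization problem you ran into; if you insist on your unweighted formulation, you must actually build the matchgate and prove it has the stated signature, which is exactly the step your proposal defers.
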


The idea in the proof of Proposition~\ref{prop:GenAlg}, presented in detail in \cite{Eli19}, is that we can construct a finite undirected  planar graph $\hat{G}$, and a weight function $W$ on the edges in $\hat{G}$,  such that $\PM(\hat{G},W)=\abs{\PC(\hat{V},G)}$. 

Let $S$ be the set of all vertices in $V\setminus \hat{V}$ connected to some vertex in $\hat{V}$. Since $G$ is locally finite and $\hat{V}$ is a finite set, $S$ is finite as well. Let $s_1,\dots,s_n$ be a clockwise order enumeration of $S$ (with respect to the planar representation of $G$ in which $\hat{V}$ and $\hat{V}\setminus V$ are separated by a simply connected domain). In the new graph, $\hat{G}$, vertices of $\hat{V}$ and edges between them remain as in the original graph. We replace any vertex $s_i\in S$ by a gadget, the graph $K_4$ (see Figure~\ref{fig:L-GadgetsNew})). We denote this gadget by $T_i$, and add edges connecting $T_{i,1}$ with all the vertices from $\hat{V}$ connected to $s_i$ in the original graph $G$. Finally, we add the edges connecting between $T_{i,2}$ and $T_{i+1,3}$ for all $i=1,2,\dots,n-1$ (see Figure~\ref{fig:GenralAlg}). We set $W(e)=1$ for all of the edges in $\hat{G}$ beside the most inner edges of the $T$-gadgets, for which we set $W(e)=\frac{1}{3}$ (see Figure~\ref{fig:L-GadgetsNew}). In this new graph, $\PM(\hat{G},W)$ is equal to the number of perfect covers in $\PC(\hat{V},G)$ that contain an even number of edges that intersect $V\setminus \hat{V}$. Since we choose $\hat{V}$ such that $\abs{\hat{V}}$ is even, all of the perfect covers in $\PC(\hat{V},G)$ are such, and therefore $\PM(\hat{G},W)=|\PC(\hat{V},G)|$.

\begin{figure}
    \centering
    \subfigure[]
    {
  \includegraphics[width=25mm, scale=0.5]{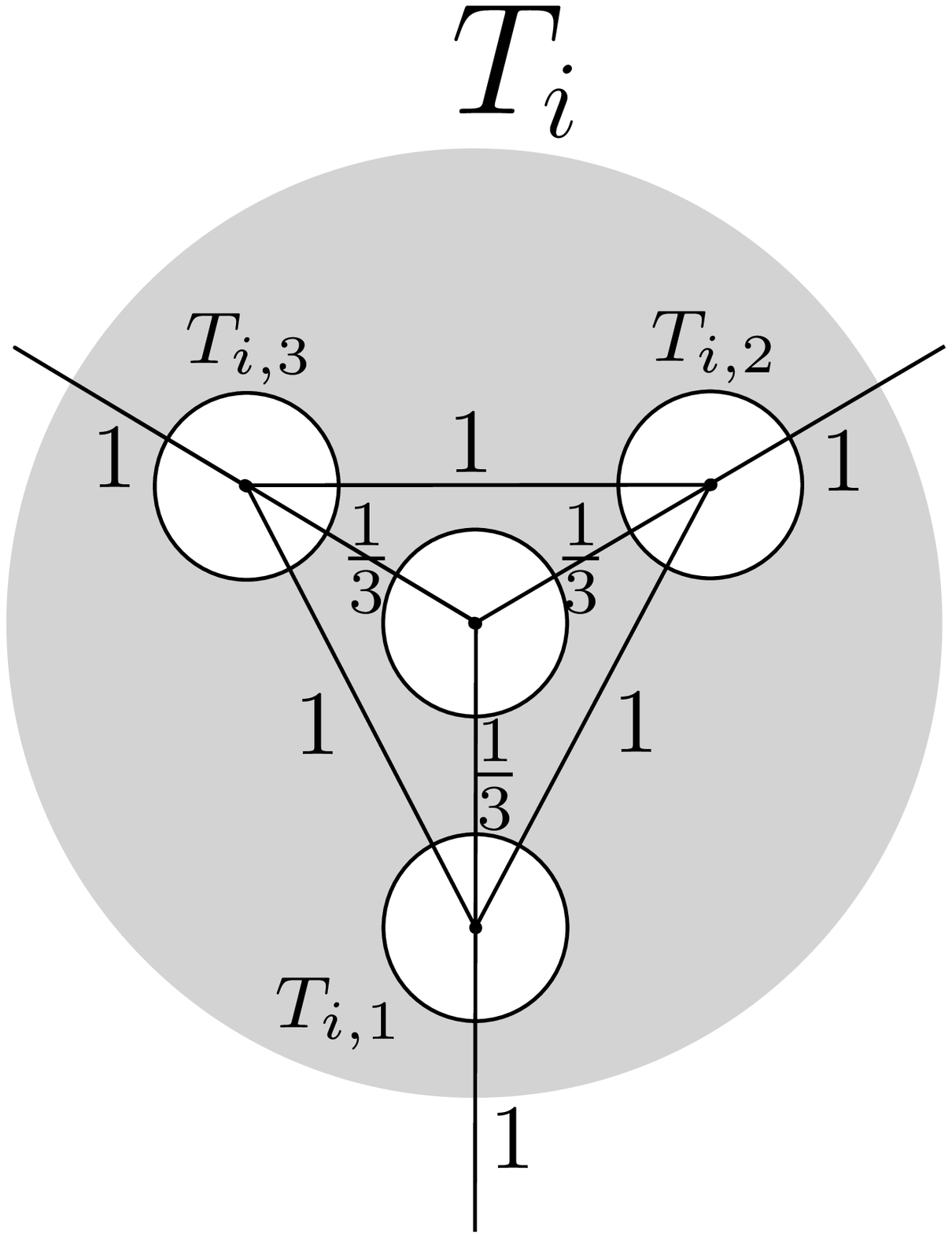}
  \label{fig:L-GadgetsNew}
    }
    \subfigure[]
    {
  \includegraphics[width=120mm, scale=0.65]{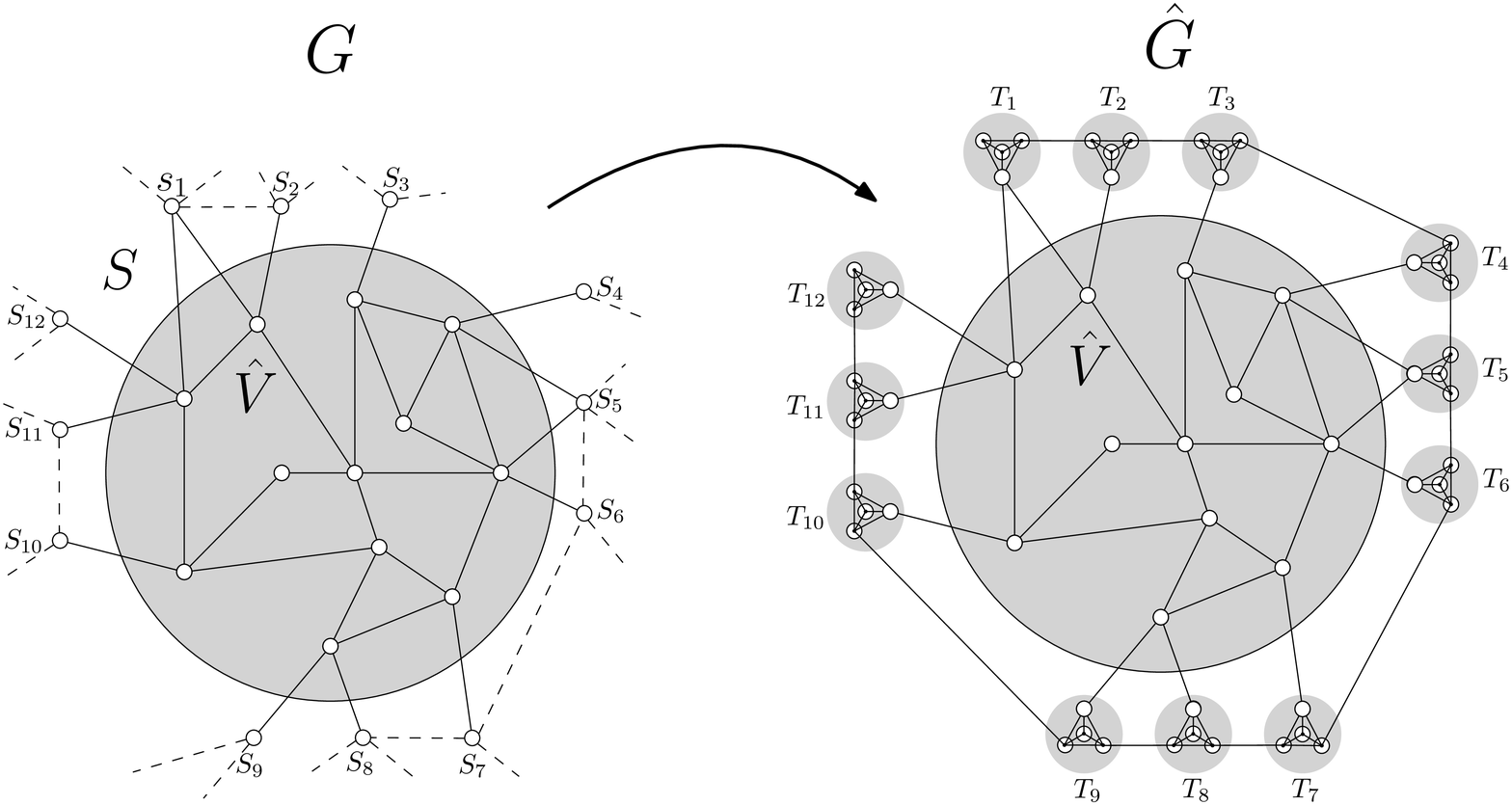}
  \label{fig:GenralAlg}
    }
    \caption
    {
        (a) The $T$-gadget and the weights on its edges.
        (b) The construction of $\hat{G}$ from $G$.
    }
    \label{fig:AlgG}
\end{figure}

In his work \cite{Kas63}, Kasteleyn showed that for any finite planar graph $G=(V,E)$, and a weight function on the edges $W$, it is possible to find an orientation of the edges such that $\abs{\PM(\hat{G},W)}=\abs{\det(A)}$, where $A$ is the $|V|\times |V|$ adjacency matrix of the orientation, defined by
\[ A (i,j)=\begin{cases}
W\parenv{\mathset{i,j}} & \text{If } i\to j \\
-W\parenv{\mathset{i,j}} & \text{If } j\to i\\
0 & \text{Otherwise}.
\end{cases}\] 
Such an orientation is called a Pfaffian orientation. The inductive proof of his theorem gives rise to an algorithm for finding a Pfaffian orientation in a complexity of $O(|E|^2)$ operations, and since in a planar graph $|E|=O(|V|)$, a Pfaffian orientation can be found in $O(|V|^2)$ operations.

We note that the number of vertices in our newly constructed graph $\hat{G}$ is bounded by $=C|\hat{V}|$ for some $C>0$, as the original graph is planar.  Therefore, applying Kasteleyn's method we can fined a Pfaffian orientation in $O(|\hat{V}|^2)$ operations and compute $\PM(\hat{G},W)$, which is the determinant of the adjacency matrix of this orientation. The determinant of a $|\hat{V}|\times |\hat{V}|$ matrix is computable by $O(|\hat{V}|^3)$ operations and therefore so as $\PM(\hat{G},W)$.

By Proposition~\ref{prop:GenAlg}, we can compute $\abs{\PC(\hat{V}_{n,m},G_{A_L})}$ in polynomial-time as $G_{A_L}$ is planar and we may separate between $ \hat{V}_{n,m}$ and ${V_{A_L}}\setminus  \hat{V}_{n,m}$ by a parallelogram (see Figure~\ref{fig:PerToPCnew}). This yields the following
\begin{proposition}
\label{prob:algA_l}
There exists an algorithm for computing $\abs{B_{n,m}(A_+)}$ with time complexity $O(m^3n^3)$.
\end{proposition}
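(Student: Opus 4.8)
The plan is to run the honeycomb/perfect-cover template above, but with the graph $G_{A_+}'$ produced by Theorem~\ref{th:GenCan} in place of $G_{A_L}'$, and the decisive first step is to observe that $G_{A_+}'$ is planar. By construction $G_{A_+}'$ has vertex set $\{I,O\}\times\Z^2$ and an edge $\{(O,v),(I,u)\}$ exactly when $u-v\in A_+$, i.e. when $u$ and $v$ are orthogonal neighbours in $\Z^2$. This is precisely the bipartite (Kronecker) double cover of the square lattice; since $\Z^2$ is itself bipartite, the cover splits into two connected components, each isomorphic to $\Z^2$: the component of $(O,b)$ with $b$ in the even sublattice is $\{(O,b):b\text{ even}\}\cup\{(I,w):w\text{ odd}\}$, and symmetrically for the other. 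Thus $G_{A_+}'$ is a disjoint union of two square lattices, hence planar, which is exactly what lets us invoke Kasteleyn's method through Proposition~\ref{prop:GenAlg}. (This also re-derives the double-dimer picture of Theorem~\ref{th:Sol+}, since a perfect matching of $G_{A_+}'$ is a pair of perfect matchings of $\Z^2$.)

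Next I would pin down which rectangular patterns occur. Writing $\hat I_{n,m}=[1,n-1]\times[1,m-1]$ for the set of cells all four of whose neighbours lie in $[n]\times[m]$ (the ``interior''), the claim is
\[ B_{n,m}(A_+)=\{f:[n]\times[m]\to([n]\times[m])+A_+ : f \text{ injective and } \hat I_{n,m}\subseteq f([n]\times[m])\}.\]
The inclusion $\subseteq$ is immediate: if $f=\omega|_{[n]\times[m]}$ for $\omega\in\Omega(A_+)$ then $f$ is injective, and each interior cell $u$ has $\omega^{-1}(u)$ a neighbour of $u$ (as $A_+=-A_+$), so $\omega^{-1}(u)\in[n]\times[m]$ and $u\in f([n]\times[m])$. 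Exactly as in the $A_L$ argument, perfect covers of
\[ \hat V_{n,m}\eqdef \parenv{\{O\}\times([n]\times[m])}\cup\parenv{\{I\}\times\hat I_{n,m}}\]
in $G_{A_+}'$ then correspond bijectively to such $f$: covering each $(O,k)$ records an out-edge $f(k)\sim k$, disjointness of the cover forces injectivity, covering each interior $(I,u)$ forces $\hat I_{n,m}\subseteq f([n]\times[m])$, and every cover-edge meets $\hat V_{n,m}$ automatically through its $O$-endpoint. Granting the reverse inclusion (addressed below), this yields $\abs{\PC(\hat V_{n,m},G_{A_+}')}=\abs{B_{n,m}(A_+)}$.

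The algorithmic conclusion is then a direct application of Proposition~\ref{prop:GenAlg}. The graph $G_{A_+}'$ is planar; the set $\hat V_{n,m}$ is finite and, in a planar drawing placing the two lattice sheets in disjoint half-planes, is separated from its complement by a simply connected domain (a ``dumbbell'' enclosing the two bounded blocks through a vertex-free corridor); and its size is even, since $\abs{\hat V_{n,m}}=nm+(n-2)(m-2)=2(nm-n-m+2)$. Hence $\abs{\PC(\hat V_{n,m},G_{A_+}')}$ is computable by the Pfaffian/determinant procedure of Proposition~\ref{prop:GenAlg} in $O(\abs{\hat V_{n,m}}^3)=O(n^3m^3)$ operations, which is the stated bound.

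The genuinely delicate point — equivalently, the reverse inclusion $\supseteq$ — is the local-to-global principle: that every injective $f$ covering $\hat I_{n,m}$ extends to a full permutation in $\Omega(A_+)$. Unlike the $A_L$ case, where patterns are non-crossing monotone lattice paths and extension is transparent (Section~\ref{LocGlob}), $A_+$-permutations contain $2$-cycles and bi-infinite paths of every orientation, so extension must complete the partial matching into perfect matchings of each $\Z^2$-sheet, i.e. must perfectly match $\Z^2\setminus S_i$, where $S_i$ is the finite, cover-saturated (hence colour-balanced) covered set in sheet $i$. The interior-covering condition is exactly what rules out an isolated uncovered vertex — an uncovered white would need all four neighbours among the rectangle's blacks and would therefore be interior — and one checks that no exterior vertex is isolated either, since its outermost neighbour is too far to be covered. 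I expect the crux to be upgrading ``no isolated vertices plus balance'' to an actual perfect matching of the infinite region $\Z^2\setminus S_i$; the cleanest route is an explicit completion, matching by horizontal dominoes outside a box containing $S_i$ and repairing the finitely many affected rows with vertical dominoes, exploiting that the exterior is infinite and connected. This is the $A_+$-analog of the reflection/path extension used for $A_L$, and is where essentially all of the work lies.
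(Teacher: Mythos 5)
Your proposal proves the statement as literally printed, but note that the ``$A_+$'' in Proposition~\ref{prob:algA_l} is a typo for $A_L$: the label, the abstract (``counting rectangular patterns in $\Omega(A_L)$''), and the placement at the end of Section~\ref{A_L} all show the intended claim is about $\abs{B_{n,m}(A_L)}$, and the paper's proof is precisely the preceding paragraph --- Proposition~\ref{prop:compli2} identifies $B_{n,m}(A_L)$ with injective interior-covering functions, these biject with $\PC(\hat{V}_{n,m},G_{A_L}')$, and Proposition~\ref{prop:GenAlg} applies since the honeycomb graph is planar. For the literal $A_+$ statement the paper takes yet another route, in the Remark closing Section~\ref{A_+}: via Theorem~\ref{th:PerToPM} it asserts $\abs{B_n(\Omega(A_+))}=\abs{B_n(\Omega_D)}^2=\abs{\PC([n],\Z^2)}^2$, each factor computable by Proposition~\ref{prop:GenAlg}. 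Your observation that $G_{A_+}'$ splits into two disjoint square-lattice sheets is correct and in effect rediscovers this double-dimer structure through Theorem~\ref{th:GenCan} rather than Theorem~\ref{th:PerToPM}; your bijection between perfect covers of $\hat{V}_{n,m}$ and injective interior-covering functions, the evenness count, the planarity/separation argument, and the $O(n^3m^3)$ accounting are all sound.

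The genuine gap is the one you yourself flag: the reverse inclusion, i.e.\ that every injective $A_+$-restricted $f$ on $[n]\times[m]$ whose image contains the interior extends to an element of $\Omega(A_+)$. You cannot borrow this from the paper: Proposition~\ref{prop:compli2} is proved only for $A_\oplus$ and $A_L$ (sets containing $(0,0)$, where fixed points make the exterior completion trivial away from the boundary), and the paper's own Remark for $A_+$ asserts the analogous dimer extension fact without proof. Your reduction --- perfectly match $\Z^2\setminus S_i$ in each sheet, where $S_i$ is the finite matched set --- is the right formulation, but ``balanced, matched, and no isolated vertex'' genuinely does not suffice for such a complement to be matchable: removing the four dominoes $\mathset{(1,0),(2,0)}$, $\mathset{(-1,0),(-2,0)}$, $\mathset{(0,1),(0,2)}$, $\mathset{(0,-1),(0,-2)}$ leaves the origin isolated, so a matched removal can strand the complement, and beyond isolation one must also exclude enclosed odd pockets and Hall-type failures near the boundary ring where the protruding image vertices sit. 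Your closing sentences (``I expect the crux to be\dots'', ``the cleanest route is an explicit completion\dots'') sketch a plausible construction --- matching uncovered boundary vertices outward, shooting rays past protruding images, and tiling leftover half-rows, exactly in the spirit of the explicit $e_m$-ray construction in the proof of Proposition~\ref{prop:compli2} --- but the row-repair/bookkeeping step where this could fail is never carried out. As written, the identity $\abs{\PC(\hat{V}_{n,m},G_{A_+}')}=\abs{B_{n,m}(A_+)}$ is therefore asserted, not proved, and since everything else in your argument is routine, this unproved extension lemma is essentially the whole content of the proposition.
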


We comment that these results bear some resemblance to the results in \cite{SchBru08} that use holographic reductions.

	\subsection{Alternative Correspondence For Bipartite Graphs}
	\label{PM_alt}
In the first part of this section we described a general correspondence of restricted permutations by perfect matching. This correspondence proved to be useful for studying cases where the corresponding graph is a $\Z^2$-periodic bipartite planar graph. Unfortunately, this is usually not the case. In this section, we find an alternative correspondence of restricted movement permutations and perfect matchings, for the case where the original graph is bipartite. We use this correspondence in order to study  restricted permutations of the graph $G_{A_+}$ presented in Example~\ref{ex:GenToZd}.

\begin{definition}
Let $G=(V,E)$ be a directed graph. The undirected version of $G$ is the undirected graph $\tilde{G}=(V,\tilde{E})$ obtained by removing the directions of edges in $V$. That is, $\tilde{E}\eqdef\mathset{\mathset{v,u}~:~ (v,u)\in E}$.
\end{definition}
\begin{theorem}
\label{th:PerToPM}
Let $G=(V\uplus U,E)$ be a directed bipartite graph. There is a continuous embedding of $\Omega(G)$ inside $\PM(\tilde{G})\times \PM(\tilde{G})$, where $\tilde{G}=(V\uplus U,\tilde{E})$ is the undirected version of $G$.
If a group $\Gamma$ acts on $G$ by bipartite graph isomorphisms (that is, $\gamma V=V$ for all $\gamma\in \Gamma$) then it induces an action on $\PM(\tilde{G})$ and the following diagram commutes: 
\[
\begin{tikzcd}
\Omega(G) \arrow[r,"\Gamma"] \arrow[d,swap,"\Phi"] &
 \Omega(G) \arrow[d,"\Phi"] \\
\PM(\tilde{G})^2  \arrow[r,"\Gamma"] & \PM(\tilde{G})^2 
\end{tikzcd}
\]
Furthermore, if $G$ is symmetric (that is, $(v,u)\in E$ implies $ (u,v)\in E$), $\Phi$ is a homeomorphism and $(\Omega(G),\Gamma)$ and $(\PM(\tilde{G}\times \tilde{G}),\Gamma) $ are topologically conjugated. 
\end{theorem}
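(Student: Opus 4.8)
The plan is to construct the embedding $\Phi$ explicitly from the bipartite structure, then verify its key properties in the order: well-definedness, equivariance, and finally (under symmetry) bijectivity with continuous inverse. Given a permutation $\pi\in\Omega(G)$ restricted by the bipartite directed graph $G=(V\uplus U,E)$, the decisive observation is that since $\pi$ is a bijection and every edge of $G$ joins $V$ to $U$, the permutation $\pi$ must map $V$ to $U$ and $U$ to $V$ in a way that pairs them up. More precisely, $\pi$ restricted to $V$ gives a bijection $V\to U$ (all images of $V$-vertices lie in $U$ because edges only go between the parts), and $\pi$ restricted to $U$ gives a bijection $U\to V$. Each of these two bijections, read as a set of edges in $\tilde{G}$, is a perfect matching: the edges $\{v,\pi(v)\}$ for $v\in V$ cover every vertex of $V$ once and (by bijectivity of $\pi|_V$) every vertex of $U$ once, and symmetrically for $\pi|_U$. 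So I would define $\Phi(\pi)\eqdef(M_1,M_2)$ where $M_1\eqdef\mathset{\mathset{v,\pi(v)}:v\in V}$ and $M_2\eqdef\mathset{\mathset{u,\pi(u)}:u\in U}$, both elements of $\PM(\tilde{G})$.

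First I would check that $\Phi$ is well-defined (each coordinate is genuinely a perfect matching, as sketched above) and injective: the ordered pair $(M_1,M_2)$ records $\pi$ on $V$ and on $U$ separately, and since $V\uplus U$ is all of the vertex set, this recovers $\pi$ completely. Continuity of $\Phi$ is immediate because it is defined locally (the matching edge at a vertex depends only on the value of $\pi$ at that vertex), exactly as in the proof of Theorem~\ref{th:GenCan}. For the equivariance diagram, I would take $\gamma\in\Gamma$ acting by bipartite isomorphisms, so $\gamma V=V$ and $\gamma U=U$, and compute $\Phi(\gamma\circ\pi\circ\gamma^{-1})$ directly; the computation is essentially the same conjugation-unwinding as in Theorem~\ref{th:GenCan}, where the hypothesis $\gamma V=V$ is precisely what guarantees that the two coordinates $M_1,M_2$ are permuted into themselves rather than swapped, so that the induced $\Gamma$-action on $\PM(\tilde{G})^2$ is the diagonal action and the square commutes.

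The substantive part is the "furthermore" clause: when $G$ is symmetric, $\Phi$ should be a homeomorphism onto $\PM(\tilde{G})\times\PM(\tilde{G})$, i.e. surjective with continuous inverse. Here I would take an arbitrary pair $(M_1,M_2)\in\PM(\tilde{G})^2$ and assemble a candidate permutation: for $v\in V$ let $\pi(v)$ be the $U$-endpoint of the unique $M_1$-edge at $v$, and for $u\in U$ let $\pi(u)$ be the $V$-endpoint of the unique $M_2$-edge at $u$. Because $M_1$ and $M_2$ are perfect matchings of the bipartite graph $\tilde{G}$, the map $v\mapsto\pi(v)$ is a bijection $V\to U$ and $u\mapsto\pi(u)$ is a bijection $U\to V$, so $\pi$ is a bijection of $V\uplus U$. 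The crucial point is that $\pi$ is \emph{restricted by $G$}: the matching edges come from $\tilde{E}$, so $\{v,\pi(v)\}\in\tilde{E}$, and symmetry of $G$ upgrades this undirected adjacency to the directed containment $(v,\pi(v))\in E$ required for membership in $\Omega(G)$. This is exactly where symmetry is indispensable, and it is the main obstacle: without symmetry, an arbitrary pair of matchings may encode an undirected adjacency that is not a valid directed edge in the chosen orientation, so $\Phi$ need only be an embedding and not onto.

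Finally I would note that continuity of $\Phi^{-1}$ follows, as before, from its local definition, completing the verification that $\Phi$ is a homeomorphism and hence, together with the already-established equivariance, a topological conjugacy between $(\Omega(G),\Gamma)$ and $(\PM(\tilde{G})^2,\Gamma)$.
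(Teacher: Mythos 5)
Your proposal is correct and follows essentially the same route as the paper's proof: the same map $\Phi(\pi)=(M^1_\pi,M^2_\pi)$ built from the $V$-side and $U$-side of $\pi$, the same local-definition argument for continuity and equivariance (with $\gamma V=V$ preventing the two coordinates from swapping), and the same inverse construction $\pi_{M_1,M_2}$ in the symmetric case, with symmetry used exactly where you use it, to upgrade $\{x,\pi(x)\}\in\tilde{E}$ to $(x,\pi(x))\in E$. The only difference is cosmetic: you phrase bijectivity via the observation that a perfect matching of a bipartite graph induces a bijection between the two parts, where the paper verifies injectivity and surjectivity element by element.
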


\begin{proof}
Let $\pi\in \Omega(G)$ be a restricted permutation. Consider $M_{\pi}^1\eqdef \mathset{\mathset{v,\pi(v)}~:~v\in V} $
and $M_{\pi}^2\eqdef \mathset{\mathset{u,\pi(u)}~:~u\in U}$. 
Clearly, $M_{\pi}^1,M_{\pi}^2\subseteq \tilde{E}$ as $\pi$ is restricted by $G$. From the definition of $M_{\pi}^1$, since $\pi$ is bijective, for $x\in V\uplus U$, the unique edge which covers $x$ in $M_{\pi}^1$ is $\mathset{x,\pi(x)}$ if $x\in V$ and $\mathset{x,\pi^{-1}(x)}$ if $x\in U$. This shows that $M_{\pi}^1\in \PM (\tilde{G})$. Similarly we have $M_{\pi}^2\in \PM (\tilde{G})$.

Define $\Phi(\pi)=(M_{\pi}^1,M_{\pi}^2)$. We now turn to prove that $\Phi$ is injective. Let $\pi_1,\pi_2\in \Omega(G)$ be two distinct restricted permutations. Since $\pi_1\neq \pi_2$, there exists $x\in V\uplus U$ such that $\pi_1(x)\neq \pi_2(x)$, where without loss of generality $x\in V$. From the definition of $M_{\pi_1}^1$ and  $M_{\pi_2}^1$, we have that 
$\mathset{x,\pi_1(x)}\in M_{\pi_1}^1$ and   $\mathset{x,\pi_2(x)}\in M_{\pi_2}^1$. Since $M_{\pi_1}^1$ and $M_{\pi_2}^1$ are perfect matchings of $\tilde{G}$, $\mathset{x,\pi_2(x)}\notin M_{\pi_1}^1 $
as $x$ is already covered by the edge $\mathset{x,\pi_1(x)}$ in $M_{\pi_1}^1$. Thus, $ M_{\pi_1}^1\neq  M_{\pi_2}^1$ and in particular $\Phi(\pi_1)\neq \Phi(\pi_2)$. Similarly to the proof of Theorem \ref{th:GenCan}, $\Phi$ is continuous as it is defined locally.

Let $\Gamma$ be a group acting on $G$ by bipartite graph isomorphisms. For any $v\in V$, $u\in U$, and $\gamma\in \Gamma$ we have $\mathset{u,v}\in \tilde{E} \iff \mathset{\gamma u,\gamma v}\in \tilde{E}$. This implies that $\Gamma$ acts on $\tilde{G}$ by isomorphisms and similarly to the proof of Theorem~\ref{th:GenCan}, this action induces a group action of $G$ on $\PM(\tilde{G})$ by $ \gamma(M)\eqdef \mathset{\mathset{\gamma v,\gamma u}~:~\mathset{v,u}\in M}$.

In order to complete the first part of the theorem, it remains to show that for any $\pi\in \Omega(G)$ and $\gamma\in \Gamma$ we have $\Phi(\gamma(\pi))=\gamma(\Phi(\pi))$. That is, $(\gamma M_\pi^1, \gamma M_\pi^2)=(M_{\gamma(\pi)}^1,M_{\gamma(\pi)}^2)$. Indeed,
\begin{align*}
\gamma M_\pi^1&=\mathset{\mathset{\gamma v,\gamma \pi(v)}~:~v\in V}=\mathset{\mathset{\gamma v,(\gamma\pi \gamma^{-1})(\gamma v))}~:~v\in V}\\
&=\mathset{\mathset{u,(\gamma(\pi))(u))}~:~u\in \gamma V}\underset{(\gamma V=V)}{=}\mathset{\mathset{u,(\gamma(\pi))(u))}~:~u\in V}=M_{\gamma(\pi)}^1.
\end{align*}
Symmetrically, we show that $\gamma M_\pi^2=M_{\gamma(\pi)}^2$, which completes the first part of the proof.

Assume furthermore that $G$ is symmetric. We need to show that the map $\pi \to (M_\pi^1,M_\pi^2)$ is invertible. Given two perfect matchings, $M_1,M_2\in \PM (\tilde{G})$, for any $x\in V\uplus U$ and $i \in \mathset{1,2}$, denote by $M_i(x)$ the unique vertex in $V\uplus U$ such that $\mathset{x,M_i(x)}\in M_i'$. Define $\pi_{M_1,M_2}:V\uplus U \to V\uplus U$ by 
\[ 
\pi_{M_1,M_2}(x)=\begin{cases}
M_1(x) & \text {if } x\in V\\
M_2(x) & \text {if } x\in U.
\end{cases}
\] 
Note that for any $x\in V\uplus U$,
\[ \mathset{x,\pi_{M_1,M_2}(x)} \in \mathset{\mathset{x,M_1(x)},\mathset{x,M_2(x)}} \subseteq \tilde{E}.\]
Since $G$ is symmetric, $(x,M_1(x)),(x,M_2(x))\in E$ and therefore $(x,\pi_{M_1,M_2}(x))\in E$. That is, $\pi_{M_1,M_2}$ is restricted by $G$. Assume that $\pi_{M_1,M_2}(x)=\pi_{M_1,M_2}(x')$. Since $G$ is bipartite and $\pi_{M_1,M_2}$ is restricted by $G$ we have that $x,x'\in V$ or $x,x'\in U$, where without the loss of generality let us assume that $x,x'\in V$. From the definition of $\pi_{M_1,M_2}$, it follows that 
\[ M_1(x)=\pi_{M_1,M_2}(x)=\pi_{M_1,M_2}(x')=M_1(x'),\]
and therefore $x=x'$ (as $M_1$ is a perfect matching of $G$). This shows that $\pi_{M_1,M_2}$ is injective. Let $x\in U$, we note that $M_1(x)\in V$ and $M_1(M_1(x))=x$. From the definition of $\pi_{M_1,M_2}$, it follows that 
\[ \pi_{M_1,M_2}(M_1(x))=x.\]
Similarly, if $x\in V$, $\pi_{M_1,M_2}(M_2(x))=x$ and $\pi_{M_1,M_2}$ is onto $V\uplus U$. Define $\Psi:\PM(\tilde{G})^2\to \Omega(G)$ by $\Psi(M_1,M_2)=\pi_{M_1,M_2}$.
It is easy to see that $\Psi\circ \Phi$ and $\Phi\circ \Psi$ are the identity functions on $\PM(\tilde{G})^2$ and $\Omega(G)$ respectively. By similar argument as in the first part of the proof,  $\Psi$ is continuous. This shoes that $\Phi$ is a homeomorphism and completes the proof. 
\end{proof}

\begin{remark}
\label{re:bipartiso}
In the setting of Theorem~\ref{th:PerToPM}, if $\Gamma$ acts by graph isomorphisms which might switch between the sides of the graph, the theorem remains true when we set the action of $\Gamma$ on $\PM(\tilde{G})^2$ to be 
\[ \gamma (M_1,M_2)\eqdef\begin{cases}
(\gamma M_1,\gamma M_2) & \text{if } \gamma (V)=V\\
(\gamma M_2,\gamma M_1) & \text{if } \gamma (V)=U\\
\end{cases} \]
instead of $\gamma (M_1,M_2)=(\gamma M_1,\gamma M_2)$. The original proof will work with this minor change. Thus, we may generalize the definition of bipartite group actions on a graph $G$ and allow element to switch between sides. Defining the induced group action on $\PM(\tilde{G})^2$ as above, the statement in Theorem~\ref{th:PerToPM} holds.
\end{remark}

We recall that for an undirected graph $G=(V,E)$, the set of $G$-restricted permutations is defined to be $\Omega(G)\eqdef \Omega(\vec{G})$, where $\vec{G}=(\vec{V},\vec{E})$ is the directed version of $G$. That is, the graph defined by 
\[ \vec{V}=V, \  \vec{E}=\mathset{(v,u)\in V\times V : \mathset{v,u}\in E}.\]
We note that the undirected version of $\vec{G}$ is $G$. Therefore, by Theorem \ref{th:PerToPM}, we conclude the following:

\begin{corollary}
\label{cor:undirected-PM}
For any $G$ undirected  bipartite graph, there is a homeomorphism between $\Omega(G)$ and $\PM(G)^2$. If $\Gamma$ is acting on $G$ by bipartite graph isomorphisms, then $(\Omega(G),\Gamma)$ and $(\PM(G)^2,\Gamma)$ are topologically conjugated.  
\end{corollary}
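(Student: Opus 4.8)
The plan is to deduce Corollary~\ref{cor:undirected-PM} directly from Theorem~\ref{th:PerToPM} by observing that an undirected bipartite graph, viewed as a directed graph via $\vec{G}$, is automatically symmetric, so the strongest conclusion of the theorem (the homeomorphism, not merely the embedding) applies. First I would record the setup: given an undirected bipartite graph $G=(V\uplus U,E)$, we form $\vec{G}$ as in the excerpt, with $\vec{E}=\mathset{(v,u):\mathset{v,u}\in E}$, and set $\Omega(G)\eqdef\Omega(\vec{G})$ by convention. The key structural remark is that $\vec{G}$ is symmetric in the sense of Theorem~\ref{th:PerToPM}: if $(v,u)\in\vec{E}$ then $\mathset{v,u}\in E$, hence $\mathset{u,v}\in E$ (undirected edges are unordered), so $(u,v)\in\vec{E}$. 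Moreover $\vec{G}$ is bipartite with the same bipartition $V\uplus U$ as $G$, since directing edges does not change which endpoints lie in $V$ versus $U$.

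Next I would identify the undirected version of $\vec{G}$ with the original $G$. By definition the undirected version $\tilde{\vec{G}}$ has edge set $\mathset{\mathset{v,u}:(v,u)\in\vec{E}}=\mathset{\mathset{v,u}:\mathset{v,u}\in E}=E$, so $\tilde{\vec{G}}=G$. This is the point flagged in the paragraph preceding the corollary, and it is what lets us replace the abstract $\tilde{G}$ appearing in Theorem~\ref{th:PerToPM} by $G$ itself. With these two observations in hand I would simply invoke the ``furthermore'' clause of Theorem~\ref{th:PerToPM} applied to the directed graph $\vec{G}$: since $\vec{G}$ is a symmetric directed bipartite graph, the map $\Phi$ of that theorem is a homeomorphism between $\Omega(\vec{G})=\Omega(G)$ and $\PM(\tilde{\vec{G}})^2=\PM(G)^2$, giving the first assertion.

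For the equivariant statement I would let $\Gamma$ act on $G$ by bipartite graph isomorphisms and note that such isomorphisms act on $\vec{G}$ as well, preserving $\vec{E}$ because they preserve $E$ and commute with the unordered-to-ordered passage; they satisfy $\gamma V=V$, so they are bipartite isomorphisms of $\vec{G}$ in the sense required by Theorem~\ref{th:PerToPM}. The theorem then yields that $(\Omega(\vec{G}),\Gamma)$ and $(\PM(\tilde{\vec{G}})^2,\Gamma)=(\PM(G)^2,\Gamma)$ are topologically conjugated, which is exactly the claimed conjugacy after rewriting $\Omega(\vec{G})=\Omega(G)$.

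I do not expect a genuine obstacle here, since the corollary is a specialization rather than a new argument; the only care needed is bookkeeping. The single point to get exactly right is the identification $\tilde{\vec{G}}=G$, because Theorem~\ref{th:PerToPM} is phrased with $\tilde{G}$ denoting the undirected version of the \emph{given} directed graph, and one must confirm that starting from an undirected $G$, passing to $\vec{G}$, and then taking the undirected version returns $G$ on the nose (including that no parallel edges or orientation artifacts are introduced). Once that round-trip identity is verified, the homeomorphism and the commuting diagram transfer verbatim, so the proof is essentially a two-line invocation of the symmetric case of the theorem.
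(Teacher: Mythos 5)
Your proposal is correct and follows exactly the paper's own route: the paper also deduces the corollary by noting that $\Omega(G)\eqdef\Omega(\vec{G})$, that the undirected version of $\vec{G}$ is $G$ itself, and that $\vec{G}$ is symmetric and bipartite, so the ``furthermore'' clause of Theorem~\ref{th:PerToPM} applies verbatim. Your extra care in checking $\tilde{\vec{G}}=G$ and the equivariance of the $\Gamma$-action is precisely the bookkeeping the paper leaves implicit.
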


\begin{example}
Consider the two-dimensional (undirected) honeycomb lattice from Example~\ref{ex:Honeycomb}, denoted by $L_H$. By Corollary~\ref{cor:undirected-PM}, restricted permutations of the honeycomb lattice correspond with pairs of perfect matchings of $L_H$.  In Section~\ref{A_L} we have shown that perfect matchings of the honeycomb lattice are in 1-1 correspondence with permutations of $\Z^2$ restricted by the set $A_L$. Combining the results, we conclude that restricted permutations of the honeycomb lattice correspond with pairs of $\Z^2$ permutations restricted by $A_L$. That is, $\Omega(G_{L_H})$ and $\Omega(A_L)\times \Omega(A_L)$ are topologically conjugated.
\begin{figure}
 \centering
  \includegraphics[width=120mm, scale=0.65]{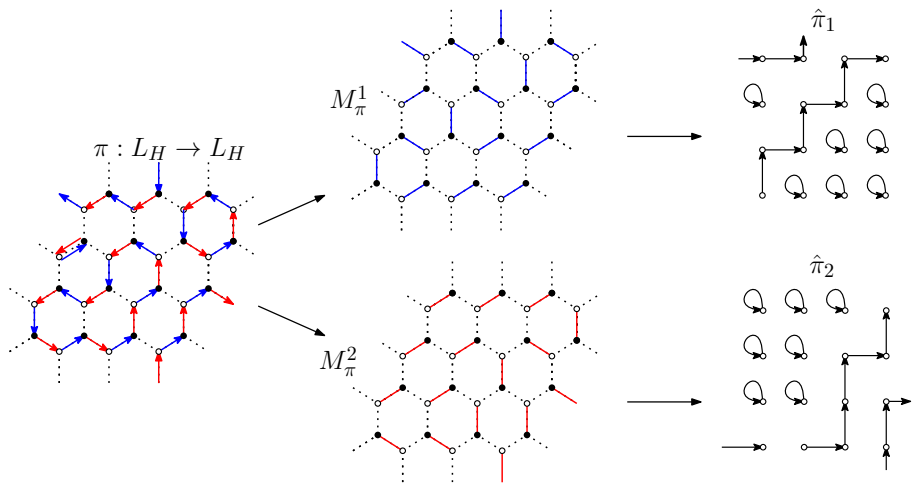}
  \caption{The correspondence between a restricted permutation of the honeycomb lattice, perfect matchings and permutations of $\Z^2$ restricted by $A_L$.}
  \label{fig:HoneycombPerToPMtoAL}
\end{figure}
\end{example}
		
	\subsubsection{Permutations of $\Z^2$ Restricted by $A_+$ }
		\label{A_+}
In this section, we consider the case of permutations of $\Z^2$ restricted by the set $A_+=\mathset{(0,\pm 1),(\pm 1,0)}$, presented in Example~\ref{ex:GenToZd}. In that case, the corresponding graph described in Theorem~\ref{th:GenCan} (the general correspondence) is a $\Z^2$-periodic bipartite graph, but in this $\Z^2$-periodic presentation it has intersecting edges (see Figure~\ref{fig:Gen_A+}). Thus, we cannot use the results from the theory of $\Z^2$-periodic bipartite planar graphs as in the case of $\Omega(A_L)$. Fortunately, the graph $G_{A_+}$ (see Figure~\ref{fig:Graph_PL}) is $\Z^2$-periodic, bipartite, and planar (when we think of it as an undirected graph by merging the two edges with opposite directions between adjacent vertices). Using the alternative correspondence to perfect matchings, we have that restricted permutation of $G_{A_+}$ correspond to pairs of perfect matchings of the square lattice in $\Z^2$. 
\begin{figure}
 \centering
  \includegraphics[width=80mm, scale=0.5]{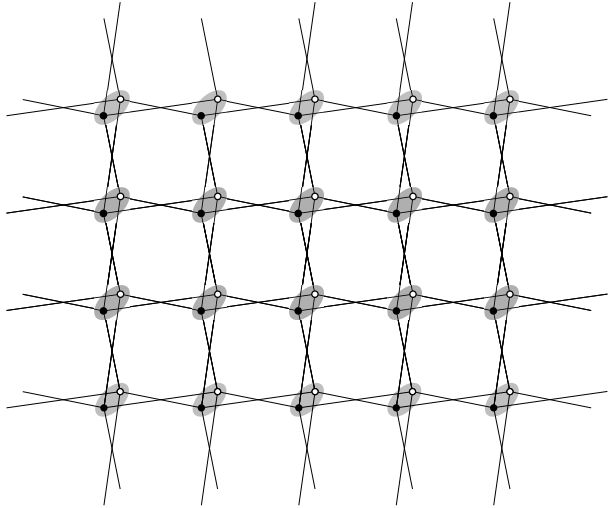}
  \caption{The corresponding graph for $G_{A_+}$ from Theorem~\ref{th:GenCan}. Black points represent vertices of the form $(I,n)$ and white points represent vertices of the form $(O,n)$.    }
  \label{fig:Gen_A+}
\end{figure}

 The problem of finding the exponential growth rate of perfect matchings (also called dimer coverings) of the square lattice , also known as the square lattice dimer problem or domino tiling problem, was studied thoroughly in the last century  (e.g., see \cite{Fis61, Kas61, ChoKenPro01, Fis66, TemFis60}). 

We show that $\Omega(A_+)$ is conjugated to the Cartesian product of the SFT of dimer coverings of the square lattice. We use some well known results regarding the square lattice dimer model in order to find the (topological, periodic and closed) entropy. Finally, we discuss methods for counting patterns in polynomial-time.

Given $M\in \PM(\Z^2)$ and $n\in \Z^2$, there exists a unique element  $m\in \Z^2$ such that $\mathset{n,m}\in M$, we denote such an $m$ by $M(n)$. Furthermore, from the definition of the square lattice $M(n)\in n+A_+$. For all $n\in \Z^2$, we define $\omega_M(n)\eqdef M(n)-n$. When we identify each element $M\in \PM(\Z^2)$ with $\omega_M\in A_+^{\Z^2}$, we get an embedding of $\PM(\Z^2)$ in $A_+^{\Z^2}$. This embedding of $\PM(\Z^2)$ in $A_+^{\Z^2}$ is an SFT, which we denote by $\Omega_D$.
We consider the topological space $\Omega_D^2\eqdef \Omega_D\times \Omega_D$, equipped with the product topology, which is compact. With the continuous group action of $\Z^2$ on $\Omega_D^2$ given in Remark~\ref{re:bipartiso}, the pair $(\Omega_D^2,\Z^2)$ forms a topological dynamical system. Applying  Corollary ~\ref{cor:undirected-PM}, we have that $\Omega(A_+)$ and $\Omega_D^2$ are topologically conjugate. 
\begin{corollary}
\label{cor:PlusEnt}
\begin{align*}
\topent(\Omega(A_+))=2 \topent \parenv{\Omega_D}
\end{align*}
\end{corollary}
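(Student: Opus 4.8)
The plan is to combine the topological conjugacy $\Omega(A_+)\cong\Omega_D^2$, already established through Corollary~\ref{cor:undirected-PM}, with the additivity of topological entropy under Cartesian products. Since conjugate systems share the same entropy (Fact~\ref{fact:conj}), the conjugacy immediately gives $\topent(\Omega(A_+))=\topent(\Omega_D^2)$, and it then remains to prove the purely product-theoretic identity $\topent(\Omega_D\times\Omega_D)=2\topent(\Omega_D)$. So the argument splits into a ``transport across the conjugacy'' step and a ``product additivity'' step.

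For the additivity I would argue at the level of block counts. Under the diagonal $\Z^2$-action, a legal $[n]$-pattern of the product system is exactly a pair consisting of one legal $[n]$-pattern from each factor, so $B_n(\Omega_D\times\Omega_D)$ is in bijection with $B_n(\Omega_D)\times B_n(\Omega_D)$ and hence $\abs{B_n(\Omega_D\times\Omega_D)}=\abs{B_n(\Omega_D)}^2$. Taking logarithms, dividing by $\abs{[n]}=n^2$, and invoking Fact~\ref{prop:fekete} (so that the defining $\limsup$ is a genuine limit and the product splits into a sum), I obtain
\[ \topent(\Omega_D\times\Omega_D)=\lim_{n\to\infty}\frac{\log\abs{B_n(\Omega_D)}^2}{n^2}=2\lim_{n\to\infty}\frac{\log\abs{B_n(\Omega_D)}}{n^2}=2\topent(\Omega_D). \]

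The step that requires care, and which I expect to be the main obstacle, is that the $\Z^2$-action carried by $\Omega_D^2$ is not literally the diagonal product action: by Remark~\ref{re:bipartiso}, those translations of $\Z^2$ that flip the black/white bipartition of the square lattice act on $\Omega_D^2$ by additionally interchanging the two coordinates. Thus the block count above computes the entropy of the diagonal action, while the conjugacy to $\Omega(A_+)$ carries the twisted action, and I must verify that the two actions have equal entropy. I would resolve this by passing to the index-two sublattice $\Lambda=\mathset{n\in\Z^2 : n_1+n_2\text{ even}}$ of parity-preserving translations. Every element of $\Lambda$ satisfies $\gamma V=V$, so on $\Lambda$ the twisted and diagonal actions coincide; since restricting a $\Z^2$-action to a finite-index subgroup rescales its topological entropy by exactly the index (the fundamental domain grows by the index factor, a standard block-counting fact), the two actions on $\Omega_D^2$ agree on a finite-index subgroup and therefore have the same entropy. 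Combining the three steps gives $\topent(\Omega(A_+))=\topent(\Omega_D^2)=2\topent(\Omega_D)$, as claimed.
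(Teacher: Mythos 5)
Your proposal is correct, and it follows the same overall route as the paper's proof, which consists of exactly two sentences: the entropy of a direct product is the sum of the entropies, hence $\topent(\Omega_D^2)=2\topent(\Omega_D)$, and conjugate systems have equal entropy, hence $\topent(\Omega(A_+))=\topent(\Omega_D^2)$. Where you go beyond the paper is precisely at the point you flagged as the main obstacle. The conjugacy of Corollary~\ref{cor:undirected-PM} equips $\Omega_D^2$ with the twisted $\Z^2$-action of Remark~\ref{re:bipartiso}, in which odd-parity translations (those that exchange the two sides of the bipartition of the square lattice) also swap the two dimer configurations, whereas the block-counting identity $\abs{B_n(\Omega_D\times\Omega_D)}=\abs{B_n(\Omega_D)}^2$ computes the entropy of the plain diagonal product action. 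The paper's proof silently identifies these two actions. Your patch --- restricting both actions to the index-two sublattice $\Lambda$ of parity-preserving translations, where they literally coincide, and invoking the subgroup entropy formula (the same fact the paper itself cites from \cite{Schm12} in the proof of Proposition~\ref{prop:MatEnt}) so that $2\,\topent(\text{twisted})=\topent(\text{twisted}|_\Lambda)=\topent(\text{diagonal}|_\Lambda)=2\,\topent(\text{diagonal})$ --- is correct and closes this gap; your block-counting justification of product additivity via Fact~\ref{prop:fekete} is likewise sound. In short, your argument is the paper's argument, made rigorous at the one point where the paper is cavalier.
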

\begin{proof}
We recall that the entropy of a direct product is the sum of entropies. Thus, $\topent(\Omega_D^2)=2\topent(\Omega_D)$. Since $\Omega(A_+)$ and $\Omega_D^2$ are conjugate they have the same entropy and \[ \topent(\Omega(A_+))=\topent(\Omega_D^2)= 2 \topent \parenv{\Omega_D}. \]
\end{proof}

The equivalence between the double dimer model and $\Omega(A_+)$ we have just proved may also be used in order to find the periodic and closed entropy of $\Omega(A_+)$.  For $n\in \N$, let $L_{S,n}=(V_n,E_n)$ be the $n\times n$ square sub-graph of the square lattice, that is 
\[ V_n\eqdef [n]\times [n], \ E_n\eqdef \mathset{\mathset{v_0,v_1}\in ([n]\times [n])^2 ~:~ \norm{v_0-v_1}_1=1 } .\]
Let $L_{S,n}^T=(V_n^T,E_n^T)$ be the $n\times n$ square lattice on the torus, 
\[V_n^T\eqdef [n]\times [n], \ E_n^T\eqdef \mathset{\mathset{v_0,v_1}\in ([n]\times [n])^2 ~:~ \norm{(v_0-v_1)\bmod (n,n)}_1=1 }. \]
We note that $L_{S,n}$ is a sub graph of $L^T_{S,n}$ and therefore $\PM(L_{S,n})$ is a subset of $L_{S,n}^T.$

In Kasteleyn's original work \cite{Kas61}, an exact formula for $\abs{\PM(L_{S,2n}^T)}$ was given, which was later used to show that
\[ \lim_{n\to \infty} \frac{\log\abs{\PM(L_{S,2n}^T)}}{4n^2}=\frac{1}{4}\cdot \intop_0^1 \intop_0^1\log(4-2\cos(2\pi x) -2\cos (2\pi y))dxdy.\]
It is also shown in \cite{Kas61}, that the exponential growth rate of $\abs{\PM(L_{S,n})}$ is the same as  $\abs{\PM(L_{S,n}^T)}$, That is 
\[ \lim_{n\to \infty} \frac{\log\abs{\PM(L_{S,2n}^T)}}{4n^2}= \lim_{n\to \infty} \frac{\log\abs{\PM(L_{S,2n})}}{4n^2}.\]

\begin{proposition}
\label{prop:A_+perCloed}
\[ \topent_c(A_+)=\topent_p(\Omega(A_+))=2 \lim_{n\to \infty} \frac{\log\abs{\PM(L_{2n}^T)}}{4n^2} \]
\end{proposition}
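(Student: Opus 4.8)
The plan is to squeeze the three entropies between two coinciding bounds. Set $c\eqdef\lim_{n\to\infty}\frac{\log\abs{\PM(L_{S,2n}^T)}}{4n^2}$, the Kasteleyn constant of the square lattice, so that the asserted value is $2c$. I will use the chain of inequalities $\topent_c(A_+)\le\topent_p(\Omega(A_+))\le\topent(\Omega(A_+))$ recorded in Section~\ref{CHA:preliminaries}, show that the right-hand end equals $2c$ and that the left-hand end is at least $2c$, and thereby force all three quantities to equal $2c$. The advantage of routing through the sandwich is that it lets me avoid the growth rate of dimers on general rectangular tori (needed for a head-on computation of $\topent_p$) and rely only on the square-torus results actually quoted from Kasteleyn.

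For the upper end, Corollary~\ref{cor:PlusEnt} gives $\topent(\Omega(A_+))=2\topent(\Omega_D)$. The topological entropy of the square lattice dimer SFT is the classical dimer entropy $\topent(\Omega_D)=c$: the per-site growth rate of box patterns agrees with the torus matching count computed by Kasteleyn, and the topological and periodic entropies of the square lattice dimer model coincide (cf. \cite{Kas61, MeyCha19}). Hence $\topent(\Omega(A_+))=2c$.

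For the lower end, note that a permutation counted by $\Pi_n(A_+)$ is precisely a permutation of the vertex set of the finite grid $L_{S,n}$ sending each vertex to an orthogonal neighbour inside the box, i.e. $\Pi_n(A_+)=\Omega(L_{S,n})$. Since $L_{S,n}$ is a finite undirected bipartite graph, Corollary~\ref{cor:undirected-PM} yields a bijection $\Omega(L_{S,n})\cong\PM(L_{S,n})^2$, so $\abs{\Pi_n(A_+)}=\abs{\PM(L_{S,n})}^2$. Restricting the $\limsup$ defining $\topent_c(A_+)$ to the square even indices $n=(2m,2m)$ and using the equality of the box and torus matching growth rates from \cite{Kas61}, I get
\[\topent_c(A_+)\ge\lim_{m\to\infty}\frac{2\log\abs{\PM(L_{S,2m})}}{4m^2}=2c.\]
Combined with the chain of inequalities and the upper end this gives $2c\le\topent_c(A_+)\le\topent_p(\Omega(A_+))\le\topent(\Omega(A_+))=2c$, so all three equal $2c$, which is the assertion.

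The only non-formal ingredient is the identification $\topent(\Omega_D)=c$: equating the exponential growth rate of locally admissible box patterns of the dimer model with the torus (or box) matching count is a thermodynamic-limit statement whose content is the vanishing of boundary corrections, and I would import it from the established square lattice dimer theory rather than reprove it. The remaining steps — recognising $\Pi_n(A_+)$ as grid-restricted permutations and invoking the finite-graph case of Corollary~\ref{cor:undirected-PM} — are routine, the only mild care being the parity observation that $L_{S,n}$ carries no perfect matching for odd $n$, which is harmless since the lower bound is extracted along even indices.
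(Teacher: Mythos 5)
Your proof is correct, but it takes a genuinely different route from the paper's for half of the statement. The paper never invokes the topological entropy $\topent(\Omega(A_+))$ inside this proposition: it computes both quantities head-on, restricting the equivariant bijection $\Phi$ of Theorem~\ref{th:PerToPM} to the $n\Z^2$-fixed points to obtain the exact identity $\abs{\fix_{n\Z^2}(\Omega(A_+))}=\abs{\PM(L_{S,n}^T)}^2$, and to box permutations to obtain $\abs{\Pi_{(n,n)}(A_+)}=\abs{\PM(L_{S,n})}^2$; Kasteleyn's theorem that box and torus matchings have the same growth rate then makes the two resulting limits coincide. You share the second (box) computation --- your identification $\Pi_{(n,n)}(A_+)=\Omega(L_{S,n})$ followed by the finite-graph case of Corollary~\ref{cor:undirected-PM} is the paper's bijection in different clothing --- but you handle $\topent_p$ without ever touching periodic points, by squeezing it in the chain $\topent_c\le\topent_p\le\topent$ between the lower bound $2c$ and the value $\topent(\Omega(A_+))=2\topent(\Omega_D)=2c$, where $c$ is your Kasteleyn constant. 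The identification $\topent(\Omega_D)=c$ that this requires is precisely the Meyerovitch--Chandgotia result that the paper quotes only \emph{after} this proposition in order to deduce Theorem~\ref{th:Sol+}; so your argument is not circular, but it front-loads the deepest external ingredient, whereas the paper's proof of the proposition itself needs nothing beyond Kasteleyn's counting results. What your route buys: you get $\topent(\Omega(A_+))=2c$ (the substance of Theorem~\ref{th:Sol+}) in the same stroke, and the sandwich is insensitive to whether the limsup defining $\topent_p$ runs over all rectangular tori or only square ones --- a point the paper's direct computation passes over silently when it writes $\topent_p$ as a limsup over square tori. What it costs: you lose the exact finite-$n$ identities $\abs{\fix_{n\Z^2}(\Omega(A_+))}=\abs{\PM(L_{S,n}^T)}^2$ and $\abs{\Pi_{(n,n)}(A_+)}=\abs{\PM(L_{S,n})}^2$, which the paper's proof produces along the way and later reuses (in the remark following Theorem~\ref{th:Sol+}) for exact polynomial-time counting of toral and box restricted permutations.
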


\begin{proof}
We saw that a permutation of $\Z^2$ restricted by $A_+$ corresponds to a pair of dimer coverings of $\Z^2$. For $n\in \N$, we consider the restriction of the bijection $\Phi: \Omega(A_+) \to \PM(\Z^2)\times \PM(\Z^2)$ to $\fix_{n\Z^2}(\Omega(A_+))$. Because this bijection is equivariant, for each $n$ it induces a bijection between $\fix_{n\Z^2}(\Omega(A_+))$ and $\fix_{n\Z^2}(\PM(\Z^2) \times \PM(\Z^2))=\fix_{n\Z^2}(\PM(\Z^2))\times \fix_{n\Z^2}(\PM(\Z^2))$. Any periodic perfect matching $M\in \fix_{n\Z^2}(\PM(\Z^2))$ is naturally identified with a perfect matching of the $n\times n$-torus   $M'\in \PM(L_{S,n}^T)$ defined by \[ M'=\mathset{\mathset{u \bmod (n,n),v \bmod (n,n)}: \mathset{u,v}\in M}.\] This shows that $\abs{\fix_{n\Z^2}(\Omega(A_+))}=\abs{\PM(L_{S,n}^T)}^2$.

We recall that the set of $A_+$-restricted permutations of the square $[n]\times [n]$, $\Pi_{(n,n)}(A_+)$, is identified with a subset of $\fix_{n\Z^2}(\Omega(A_+))$ (see Section \ref{CHA:preliminaries}, after Remark \ref{re:1}). Applying the bijection $\Phi$ to this subset, $\Phi(\Pi_{(n,n)}(A_+))\subseteq \fix_{n\Z^2}(\PM(\Z^2))\times \fix_{n\Z^2}(\PM(\Z^2)) $ is identified with a subset of $\PM(L_{S,n}^T)\times \PM(L_{S,n}^T)$. It is easy to verify that this subset is exactly $\PM(L_{S,n})\times \PM(L_{S,n})$ and thus $\abs{\Pi_{(n,n)}(A_+)}=\abs{\PM(L_{S,n})}^2$. All together we have, 
\[\topent_c(A_+)=\limsup_{n\to\infty}\frac{\log \abs{\Pi_{(n,n)}(A_+)}}{n^2}=2\limsup_{n\to\infty}\frac{\log \abs{\PM(L_{S,n})}}{n^2}\\
\] 
and 
\[ \topent_p(\Omega(A_+)= \limsup_{n\to\infty}\frac{\log \abs{\fix_{n\Z^2}(\Omega(A_+))}}{n^2} =2\limsup_{n\to\infty}\frac{\log \abs{\PM(L_{S,n}^T)}}{n^2}. \]
For odd $n$, since $\abs{L_{S,n}}=\abs{L_{S,n}}=n^2$ is odd, we have that $\PM(L_{S,n}^T)=\PM(L_{S,n})=\emptyset$. Hence, 
\begin{align*}
\limsup_{n\to\infty}\frac{\log \abs{\PM(L_{S,n})}}{n^2}&=
\lim_{n\to\infty}\frac{\log \abs{\PM(L_{S,2n})}}{4n^2}\\ 
&=\lim_{n\to\infty}\frac{\log \abs{\PM(L_{S,2n}^T)}}{4n^2}=\limsup_{n\to\infty}\frac{\log \abs{\PM(L_{S,n}^T)}}{n^2}.
\end{align*}
This completes the proof.
\end{proof}

In their work, Meyerovitch and Chandgotia  \cite{MeyCha19} explain the well known result 
\[ \topent (\Omega_D)=\lim_{n\to \infty} \frac{\log\abs{\PM(L_{S,2n}^T)}}{4n^2}. \]
In their proof, they use a principle called reflection positivity, relying on the symmetry of the uniform measure on perfect matchings,  with respect to reflection along
some hyperplanes.

Combining this result with Proposition \ref{prop:A_+perCloed} and Corollary \ref{cor:PlusEnt}, we obtain:
\begin{theorem}
\label{th:Sol+}
\[ \topent_c(A_+)=\topent_p(\Omega(A_+))=\topent(\Omega(A_+))=\frac{1}{2}\cdot \intop_0^1 \intop_0^1\log(4-2\cos(2\pi x) -2\cos (2\pi y))dxdy. \]
\end{theorem}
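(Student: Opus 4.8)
The plan is to assemble Theorem~\ref{th:Sol+} from three results already available in the excerpt, reducing everything to the known growth rate of toral dimer coverings of the square lattice. First I would invoke Proposition~\ref{prop:A_+perCloed}, which already establishes that
\[ \topent_c(A_+)=\topent_p(\Omega(A_+))=2\lim_{n\to\infty}\frac{\log\abs{\PM(L_{S,2n}^T)}}{4n^2}. \]
This collapses the closed and periodic entropies into a single quantity expressed through toral perfect matchings. Next I would bring in the result of Meyerovitch and Chandgotia \cite{MeyCha19}, quoted just before the statement, which identifies the topological entropy of the dimer SFT with the same limit:
\[ \topent(\Omega_D)=\lim_{n\to\infty}\frac{\log\abs{\PM(L_{S,2n}^T)}}{4n^2}. \]
Combined with Corollary~\ref{cor:PlusEnt}, which gives $\topent(\Omega(A_+))=2\topent(\Omega_D)$, this forces $\topent(\Omega(A_+))$ to agree with the common value of $\topent_c(A_+)$ and $\topent_p(\Omega(A_+))$.

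Having aligned all three entropy quantities, the remaining task is purely to evaluate the limit in closed form. Here I would appeal to Kasteleyn's computation \cite{Kas61}, cited in the excerpt, which yields
\[ \lim_{n\to\infty}\frac{\log\abs{\PM(L_{S,2n}^T)}}{4n^2}=\frac{1}{4}\intop_0^1\intop_0^1\log\parenv{4-2\cos(2\pi x)-2\cos(2\pi y)}\,dx\,dy. \]
Multiplying by the factor of $2$ coming from the double-dimer structure produces the factor $\tfrac12$ in front of the integral and completes the identification of all three entropies with the stated double integral. The proof is therefore a short chain of equalities, and I would write it as such.

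The only subtlety worth flagging — rather than a genuine obstacle — is the bookkeeping of the normalizing constants and the even/odd distinction. One must be careful that the $n^{-2}$ normalization in the definitions of $\topent_c$, $\topent_p$, and $\topent$ is consistent with the $4n^2$ normalization appearing in Kasteleyn's toral formula, and that the vanishing of $\PM(L_{S,n}^T)$ for odd $n$ (already handled inside the proof of Proposition~\ref{prop:A_+perCloed}) does not disturb the limit along even $n$. Since Proposition~\ref{prop:A_+perCloed} has already reconciled the $\limsup$ over all $n$ with the limit over even $n$, the factor-of-two accounting is the single place where a sign or coefficient error could creep in, so I would double-check that the $\tfrac14$ from Kasteleyn times the $2$ from the double-dimer correspondence gives exactly $\tfrac12$. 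Everything else follows immediately from the previously established conjugacy and entropy facts, so no new machinery is required.
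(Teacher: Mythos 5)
Your proposal is correct and follows exactly the paper's own route: the paper proves Theorem~\ref{th:Sol+} by combining Proposition~\ref{prop:A_+perCloed}, Corollary~\ref{cor:PlusEnt} together with the Meyerovitch--Chandgotia identification $\topent(\Omega_D)=\lim_{n\to\infty}\frac{\log\abs{\PM(L_{S,2n}^T)}}{4n^2}$, and Kasteleyn's evaluation of that limit as $\frac{1}{4}\intop_0^1\intop_0^1\log(4-2\cos(2\pi x)-2\cos(2\pi y))\,dx\,dy$. Your factor bookkeeping ($2\cdot\frac{1}{4}=\frac{1}{2}$) and your handling of the odd-$n$ degeneracy also match what the paper does inside Proposition~\ref{prop:A_+perCloed}.
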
  

\begin{remark}
Kasteleyn provided an exact formula for $\abs{\PM(L_{S,2n}^T)}$ \cite{Kas61}. This formula can be used in order to compute the exact number of toral permutations restricted by $A_+$, as $\abs{\fix_{n\Z^2}(\Omega(A_+))}=\abs{\PM(L_{S,2n}^T)}^2$. In order to get a more complete picture, we want to be able to compute the exact number of patterns in $B_n(\Omega(A_+))$ and in $\Pi_n(A_+)$ (permutations of $[n]$ restricted by $A_+$), for any given $n\in\N^d$. We already know that 
\[ \abs{\Pi_n(A_+)}=\abs{\PM(L_{S,n})}^2. \]
Since $L_{S,n}$ is a finite planar graph, using Kasteleyn's method, $\abs{\PM(L_{S,n})}$ is computable in polynomial time. Therefore, $\abs{\Pi_n(A_+)}$ is computable in polynomial time as well.  
For the computation of $\abs{B_n(\Omega(A_+))}$, we consider the conjugation map between $\Omega(A_+)$ and $\Omega_D^2$ from Theorem \ref{th:PerToPM}. It is easy to show that the restricting of this map to patterns in $B_n(\Omega(A_+))$ we get a bijection between $B_n(\Omega(A_+))$ and $B_n(\Omega_D)\times B_n(\Omega_D)$. Therefore, we have  $\abs{B_n(\Omega(A_+))}=\abs{B_n(\Omega_D)}^2$. Elements of $B_n(\Omega_D)$ represent perfect coverings of $[n]$ in $\Z^2$. By Theorem~\ref{prop:GenAlg}, $\abs{\PC([n],\Z^2)}$ is computable in polynomial time, and therefore 
$\abs{B_n(\Omega(A_+))}=\abs{\PC([n],\Z^2)}^2$ as well.
\end{remark}

\section{Entropy}
\label{CHA:Entropy}
In this section, we investigate the entropy of dynamical systems defined by $\Z^d$-permutations, restricted by some finite set $A\subseteq \Z^d$. We start by proving some basic properties of the topological entropy for such SFTs and use them in order to find the topological entropy whenever $A$ is composed of $3$ elements which are not contained in a line. We discuss the topic of global and local admissibility of patterns.  In the last part, we review two related models of injective and surjective restricted functions of graphs.

	\subsection{Properties}
	\label{EntProperties}
We show that the entropy of $\Z^d$-permutations, restricted by some finite set $A$, is invariant under the operation of an injective affine transformation on $A$. Furthermore, we prove the conjugacy of $\Omega(TA)$ and $\Omega(A)^{[\Z^d:T\Z^d]}$ in the case where $T\in \End(\Z^d)$  (where $\End(\Z^d)$  denotes the sets of $\Z^d$-endomorphisms).

\begin{fact}
\label{fact:ShiftInv}
\cite[Proposition 1.1]{SchStr17} Let $d\geq 1$, and $A\subseteq\mathbb{Z}^{d}$ be a finite set. For any $b\in \Z^d$, $\Omega(A)$ and $\Omega(A+b)$ are topologically conjugate (where $A+b$ denotes $\sigma_b(A)$). 
\end{fact}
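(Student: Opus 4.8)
The plan is to work entirely within the symbolic representation $\Omega(A)\subseteq A^{\Z^d}$ and exhibit an explicit conjugacy obtained by relabeling the alphabet. On the level of genuine permutations the idea is transparent: if $\pi$ is restricted by $A$, then $\sigma_b\circ\pi$ is restricted by $A+b$, since $(\sigma_b\circ\pi)(n)-n=(\pi(n)-n)+b\in A+b$, and post-composition with the bijection $\sigma_b$ preserves bijectivity. Translating this through the identification $\omega_\pi(n)=\pi(n)-n$, the induced map is just the pointwise shift of symbols. So I would define $\Phi:\Omega(A)\to\Omega(A+b)$ by
\[ \Phi(\omega)(n)\eqdef \omega(n)+b, \qquad n\in\Z^d, \]
and prove it is a topological conjugacy.

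First I would verify that $\Phi$ is a well-defined bijection onto $\Omega(A+b)$. Since $\sigma_b$ is injective, $\abs{A+b}=\abs{A}$, so $\omega\mapsto\omega(\cdot)+b$ is a bijection of the full shift spaces $A^{\Z^d}\to (A+b)^{\Z^d}$ with inverse $\eta\mapsto\eta(\cdot)-b$. The only point requiring an argument is that this bijection carries $\Omega(A)$ onto $\Omega(A+b)$. By definition $\omega\in\Omega(A)$ means $n\mapsto n+\omega(n)$ is a permutation of $\Z^d$, whereas $\Phi(\omega)\in\Omega(A+b)$ means $n\mapsto n+\Phi(\omega)(n)=n+\omega(n)+b=\sigma_b(n+\omega(n))$ is a permutation. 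Because $\sigma_b$ is a bijection of $\Z^d$, the composite $n\mapsto\sigma_b(n+\omega(n))$ is a bijection if and only if $n\mapsto n+\omega(n)$ is, giving the equivalence $\omega\in\Omega(A)\iff\Phi(\omega)\in\Omega(A+b)$.

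Next I would check the two conditions defining a conjugacy. For continuity, both $\Phi$ and $\Phi^{-1}$ are defined coordinate-wise—the symbol of $\Phi(\omega)$ at $n$ depends only on $\omega(n)$—so, exactly as in the proofs of Theorem~\ref{th:GenCan} and Theorem~\ref{th:PerToPM}, they are continuous, and hence $\Phi$ is a homeomorphism. For equivariance, the $\Z^d$-action is the shift $(\sigma_m\omega)(n)=\omega(n+m)$, which moves only coordinates and leaves symbols untouched, so
\[ \Phi(\sigma_m\omega)(n)=\omega(n+m)+b=\Phi(\omega)(n+m)=(\sigma_m\Phi(\omega))(n), \]
whence $\Phi\circ\sigma_m=\sigma_m\circ\Phi$ for every $m\in\Z^d$.

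I do not anticipate a serious obstacle here: the entire content of the statement reduces to the single observation that post-composing a permutation with the translation $\sigma_b$ preserves the permutation property, which is precisely what makes the alphabet relabeling $\omega\mapsto\omega+b$ restrict to a bijection $\Omega(A)\to\Omega(A+b)$. The homeomorphism and equivariance properties are then immediate, since $\Phi$ acts identically and independently on every coordinate.
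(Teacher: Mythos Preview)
Your proof is correct. Note, however, that the paper does not supply its own proof of this statement: it is recorded as a Fact with a citation to \cite[Proposition 1.1]{SchStr17}, so there is nothing in the present paper to compare against. Your argument---post-composing with the translation $\sigma_b$ at the level of permutations, which becomes the alphabet relabeling $\omega\mapsto\omega+b$ on the symbolic side---is exactly the expected one, and the verifications of bijectivity, continuity, and shift-equivariance are all in order.
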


\begin{proposition}
\label{prop:MatEnt}
Let $d\geq 1$ be an integer and $A\subseteq \Z^2$ be a finite set. For any $\Z^d$-endomorphism, $M\in \End(\Z^d)$, $\Omega(MA)$ with the action of $M\Z^d$ induced from the regular shift action of $\Z^d$ is isomorphic to $\Omega(A)^{[ \Z^d : M\Z^d ]}$ with the $\Z^d$ action defined by $n(\omega_1,\dots,\omega_d)=(\sigma_n(\omega_1),\dots,\sigma_n(\omega_d))$. As a consequence, $ \topent(\Omega(A))=\topent(\Omega(M A))$.
\end{proposition}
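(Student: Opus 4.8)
The plan is to exploit the fact that $MA \subseteq M\Z^d$, so that every permutation restricted by $MA$ must fix each coset of the sublattice $M\Z^d \le \Z^d$ setwise, and therefore decomposes into $k \eqdef [\Z^d : M\Z^d] = \abs{\det M}$ independent permutations, one on each coset. First I would record the structural observation: if $\pi$ is a permutation of $\Z^d$ with $\pi(n)-n \in MA$ for all $n$, then $\pi(n) \equiv n \pmod{M\Z^d}$, so $\pi$ maps each coset $c + M\Z^d$ into itself; being a bijection of $\Z^d$ that permutes the coset-partition, it restricts to a bijection of every coset. (Here the injectivity of $M$, which is forced by the finiteness of the index, is what lets me invert $M$ on $MA$.)

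Next I would transport each coset back to $\Z^d$. Fixing a set $R$ of coset representatives of $\Z^d/M\Z^d$, for $\omega \in \Omega(MA)$ (with associated permutation $\pi(n)=n+\omega(n)$) and $c \in R$ I would define $\omega_c \in A^{\Z^d}$ by $\omega_c(n) \eqdef M^{-1}\omega(c+Mn)$. The identity $\pi(c+Mn) = c + M\parenv{n + \omega_c(n)}$ shows that $n \mapsto n + \omega_c(n)$ is precisely the conjugate by $n \mapsto c+Mn$ of $\pi$ restricted to the coset, hence $\omega_c \in \Omega(A)$. I would then set $\Phi(\omega) \eqdef (\omega_c)_{c\in R} \in \Omega(A)^k$. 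The inverse reassembles a family $(\omega_c)_c$ into $\omega$ via $\omega(c+Mn) \eqdef M\omega_c(n)$, and one checks this lands in $\Omega(MA)$ by running the coset argument in reverse. Both $\Phi$ and $\Phi^{-1}$ are defined by reading finitely many coordinates, so they are continuous and $\Phi$ is a homeomorphism.

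The equivariance is a direct substitution: writing $\sigma_{Mm}(\omega)(n) = \omega(n+Mm)$ one finds $(\sigma_{Mm}\omega)_c(n) = M^{-1}\omega(c+M(n+m)) = \omega_c(n+m) = \sigma_m(\omega_c)(n)$, so $\Phi \circ \sigma_{Mm} = (\text{diagonal }\sigma_m) \circ \Phi$. Thus $\Phi$ is an isomorphism between $\Omega(MA)$ with the $M\Z^d$-shift action and $\Omega(A)^k$ with the diagonal $\Z^d$-action, which is exactly the asserted isomorphism (the group isomorphism $m \mapsto Mm$ intertwining the two actions).

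For the entropy consequence I would combine three facts. By Fact~\ref{fact:conj} the isomorphism $\Phi$ gives $\topent(\Omega(MA), M\Z^d) = \topent(\Omega(A)^k, \Z^d)$. The right-hand side equals $k\cdot \topent(\Omega(A))$, since the diagonal system is the $k$-fold product of $(\Omega(A),\Z^d)$ with itself and entropy is additive over products. The left-hand side equals $k\cdot \topent(\Omega(MA))$ by the scaling law that restricting a $\Z^d$-action to a finite-index subgroup $\Lambda$ (identified with $\Z^d$) multiplies the topological entropy by $[\Z^d:\Lambda]$. Cancelling the common factor $k$ yields $\topent(\Omega(MA)) = \topent(\Omega(A))$. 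I expect the only delicate point to be this last scaling law, where the normalization of the entropy must be handled carefully (patterns are now counted per fundamental domain of $\Lambda$ rather than per unit cell of $\Z^d$); I would either cite it from the general theory of entropy of $\Z^d$-actions, or verify it directly by comparing pattern counts over boxes $[n]$ in the $\Lambda$-coordinates with boxes of volume $k\cdot\abs{[n]}$ in $\Z^d$ via Fact~\ref{prop:fekete}.
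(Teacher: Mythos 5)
Your proposal is correct and follows essentially the same route as the paper: the same coset decomposition $\omega_c(n)=M^{-1}\omega(c+Mn)$ with a fixed set of coset representatives, the same reassembly map as inverse, the same equivariance computation $\Phi\circ\sigma_{Mm}=(\text{diagonal }\sigma_m)\circ\Phi$, and the same entropy argument combining the finite-index subgroup scaling formula with additivity of entropy over direct products. The only difference is cosmetic: where you use the coset-preservation observation (forced by $MA\subseteq M\Z^d$) to conclude that each $\omega_c$ defines a permutation, the paper verifies injectivity and surjectivity of the associated map by direct computation.
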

\begin{proof}
Denote the index of the $M\Z^d$ inside $\Z^d$ by $k$. Let $H_1,\dots,H_k$ be the cosets of $M\Z^d$ and $v_1,\dots,v_k$ be representing vectors, i.e., $v_i\in H_i$ for all $i$.

Given $\omega\in \Omega(M A)\subseteq (M A)^{\Z^d}$, we define $\Phi(\omega)\eqdef (\omega_1,\omega_2,\dots, \omega_k)$, where $\omega_i\eqdef M^{-1}\circ \omega \circ \sigma_{v_i} \circ  M$, and $\sigma_{v_i}$ is the regular shift by $v_i$ in $\Z^d$. That is,
\[ \omega_i(n) =M^{-1} \omega(Mn+v_i). \]
Clearly, $\omega(Mn+v_i)\in M A$ and therefore $\omega_i(n) =M^{-1} \omega(Mn+v_i)\in A$ and $\omega_i\in A^{\Z^d}$ for all $1\leq i \leq k$.  First we show that $\omega_j\in \Omega(A)$ for all $j$. That is, $\pi_{\omega_j}$ is a permutation of $\Z^d$  (where as usual, $\pi_{\omega_j}$ is defined by $\pi_{\omega_j}(n)=n+\omega_j(n)$).

 Injectivity -- let $n,n'\in \Z^d$, since $\pi_\omega$ is a permutation of $\Z^d$ we have:
\begin{align*}
&\pi_{\omega_j}(n)=\pi_{\omega_j}(n')\\
&\iff n+\omega_j(n)=n'+\omega_j(n')\\
&\iff M( n+\omega_j(n))=M(n'+\omega_j(n'))\\
&\iff M(n)+v_j+ M(\omega_j(n))= M(n')+v_j+ M(\omega_j(n'))\\
&\iff M(n)+v_j+ M\parenv{M^{-1}(\omega(M(n)+v_j))}= M(n')+v_j+ M\parenv{M^{-1}(\omega(M(n')+v_j))}\\
&\iff \underset{\pi_\omega(M(n)+v_j)}{\underbrace{M(n)+v_j+\omega(M(n)+v_j)}}=\underset{\pi_\omega(M(n')+v_j)}{\underbrace{M(n')+v_j+\omega(M(n')+v_j})}\\
&\iff M(n')+v_j=M(n')+v_j\iff n=n'.
\end{align*}

 Surjectivity -- let $n\in \Z^d$. There exists $m\in \Z^d$ such that $\pi_{\omega}(m)=M(n)+ v_j$. Since $\pi_\omega$ is restricted by $M   A$, $m$ belongs to the same coset as $M(n)+v_j$, which is $H_j$.
Thus, $m$ is of the form $M(m')+v_j$ for some $m'\in \Z^d$. We have 
\begin{align*}
\pi_{\omega_j}(m')&=m'+\omega_j(m')=m'+M^{-1}(\omega(M(m')+v_j))\\
&=M^{-1}(\underset{m}{\underbrace{M(m' ) +v_j}} +\underset{\omega(m)}{\underbrace{\omega(M(m')+v_j)}})-	M^{-1}(v_j)=M^{-1}(\underset{\pi_\omega(m)}{\underbrace{m+\omega(m)}})- M^{-1}(v_j)\\
&=M^{-1}(M(n) + v_j)-M^{-1}(v_j)=M^{-1}(M(n))=n.
\end{align*}

We claim that $\Phi$ is invertible. For $n\in \Z^d$, we define $j_n$ to be the index of the coset for which $n\in H_{j_n}$. Given $\omega_1,\dots \omega_{k}\in \Omega(A)$ and $n\in \Z^d$, we define
\[ \omega(n)=M(\omega_{j_n}(M^{-1}(n-v_{j_n})))\in M  A ,\]
and $\Psi(\omega_0,\dots,\omega_{k-1})=\omega$. We observe that for any $j$, $\omega_j$ defines the restriction of  $\pi_\omega$ to the coset $H_j$. We may repeat the same arguments used in the first part of the proof (in reversed order) to show that this restriction is a permutation of the coset $H_j$. Thus $\pi_\omega$ is a permutation of $\Z^d$ and $\omega \in \Omega(M  A)$. It is easy to verify that $\Psi$ is exactly the inverse function of $\Psi$, and thus $\Psi$ and $\Phi$ are bijections.

It remains to to prove that the actions of $M\Z^d$ and $\Z^d$ on $\Omega(MA)$ and $\Omega(A)$ commutes and that $\Phi$ is a homeomorphism. For $k\in \Z^d$, 
\begin{align*}
\Phi_j(\sigma_{Mk} \omega)(n)&=M^{-1}(\sigma_{Mk}\omega)(Mn+v_j)=M^{-1}(\omega)(M(n+k)+v_j)\\
&=\Phi_j(\omega)(n+k)=(\sigma_k\Phi(\omega))(n).
\end{align*} 
We note that $\Phi$ and $\Phi^{-1}$ depend on finitely many coordinates. Hence, $\Phi$ is an homeomorphism. 

By the subgroup entropy formula \cite[Proposition 13.1]{Schm12}, the topological entropy of $ \topent(\Omega(MA)$ with the $\Z^d$ action given by $n\omega=\sigma_{Mn}\omega$ is  $k$ times the topological entropy of $\Omega(MA)$ with the usual $\Z^d$ action by shifts. Since the entropy of the direct product is the sum of entropies,  we have $\topent(\Omega(A)^k)=k \topent(\Omega(A))$ and the desired equality holds.

\end{proof}
\begin{theorem}
\label{th:AffInvEnt}
Let $A\subseteq \Z^d$ be a finite set. If $T$ is an invertible affine transformation, that is $Tx=Mx+b$ where $b\in \Z^d$ and $M$ is a $d\times d$ invertible matrix (over $\R$), then
\[ \topent (\Omega(A))=\topent (\Omega (T(A)) ).\] 
\end{theorem}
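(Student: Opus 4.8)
The plan is to deduce the statement from the two results already established: Fact~\ref{fact:ShiftInv}, which lets me discard the translation part of $T$, and Proposition~\ref{prop:MatEnt}, which handles integer endomorphism matrices. Writing $Tx = Mx+b$ with $b\in\Z^d$, I note $T(A)=MA+b$, so Fact~\ref{fact:ShiftInv} gives $\topent(\Omega(T(A)))=\topent(\Omega(MA))$; moreover, for any fixed $a_0\in A$ both $\topent(\Omega(A))$ and $\topent(\Omega(MA))$ are unchanged if I replace $A$ by $A-a_0$, since $M(A-a_0)=MA-Ma_0$ and $Ma_0\in MA\subseteq\Z^d$ (again by Fact~\ref{fact:ShiftInv}). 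Hence I may assume $b=0$ and $0\in A$, and it suffices to prove $\topent(\Omega(A))=\topent(\Omega(MA))$ for an $\R$-invertible linear $M$ with $MA\subseteq\Z^d$.

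The heart of the argument is the full-dimensional case, where $\Lambda:=\langle A\rangle$ has rank $d$. Here $M$ is forced to be rational: $A$ contains $d$ linearly independent integer vectors that $M$ sends to integer vectors, so $M$ is an integer matrix times a rational inverse. The key construction is then a single common refinement $B$: pick $P\in\End(\Z^d)$, invertible over $\R$, whose columns form a $\Z$-basis of $\Lambda$, so that $P\Z^d=\Lambda\supseteq A$, and set $B:=P^{-1}A\subseteq\Z^d$ and $Q:=MP$. Because $A\subseteq\Lambda$ we have $B\subseteq\Z^d$ with $A=PB$, and because $Qe_i=M(Pe_i)\in M\Lambda=\langle MA\rangle\subseteq\Z^d$ the matrix $Q$ also lies in $\End(\Z^d)$ and is invertible over $\R$, with $MA=QB$. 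Now Proposition~\ref{prop:MatEnt} applied to $P$ and to $Q$ gives
\[ \topent(\Omega(A))=\topent(\Omega(PB))=\topent(\Omega(B))=\topent(\Omega(QB))=\topent(\Omega(MA)), \]
which settles this case.

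For the degenerate case, where $r:=\operatorname{rank}\langle A\rangle<d$, I would first reduce the dimension. Since $A\subseteq\langle A\rangle=:L$ and every restricted permutation satisfies $\pi(n)-n\in A\subseteq L$, each $\pi\in\Omega(A)$ preserves the cosets of $L$ in $\Z^d$; writing $L=P\Z^r$ for an injective integer map $P\colon\Z^r\to\Z^d$ and $B:=P^{-1}A$ (which generates $\Z^r$), this exhibits $\Omega(A)$ as a product of copies of the $r$-dimensional system $\Omega(B)$, one per coset. A pattern-counting computation then yields the reduction lemma $\topent(\Omega(A))=\topent(\Omega(B))$, where the right-hand side is the $r$-dimensional entropy. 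Applying the same reduction to $MA$ (whose generated lattice $ML$ has rank $r$) produces $B'\subseteq\Z^r$ with $\topent(\Omega(MA))=\topent(\Omega(B'))$, and $B'$ is the image of $B$ under the $\R$-invertible $r\times r$ map $(P')^{-1}MP$. Since $B$ has full rank in $\Z^r$, the full-dimensional case already proved (now in dimension $r$) gives $\topent(\Omega(B))=\topent(\Omega(B'))$, and chaining the three equalities completes the proof.

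The step I expect to be the main obstacle is precisely this degenerate case: when $A$ fails to span, $M$ may be irrational in the directions transverse to $\operatorname{span}(A)$, so there is no full-rank sublattice $\Lambda\supseteq\langle A\rangle$ with $M\Lambda\subseteq\Z^d$, and the clean $P$/$Q=MP$ bridge is unavailable. The remedy above—collapsing the irrelevant transverse directions via the coset decomposition and invoking the full-dimensional result in the reduced dimension $r$—works, but it requires the somewhat delicate verification that the $\Z^d$-entropy of the product system equals the $\Z^r$-entropy of a single factor (the transverse directions multiply the pattern counts in a way that cancels in the limit), which is the one genuinely non-formal computation in the argument.
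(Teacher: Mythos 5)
Your proposal is organized around a broader reading of the hypothesis than the paper intends, and this is the source of both its merit and its gap. In the paper, $T$ is an injective affine map of $\Z^d$ into itself (this is exactly how Theorem~\ref{th:AffInvEnt} is phrased in the introduction), so $M$ is an integer matrix with $\det M\neq 0$, i.e.\ $M\in\End(\Z^d)$; the parenthetical ``over $\R$'' only signals that $M^{-1}$ need not be integral. Under that reading the theorem really does follow directly, which is the paper's entire proof: Fact~\ref{fact:ShiftInv} disposes of $b$, and Proposition~\ref{prop:MatEnt} applies verbatim to $M$ --- its statement and proof place no condition whatsoever on $A$, and in particular do not require the subgroup $\langle A\rangle$ generated by $A$ to have rank $d$. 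The rank dichotomy that structures your whole argument is therefore unnecessary for the theorem as meant; the conceptual oversight is the implicit belief that Proposition~\ref{prop:MatEnt} needs $A$ to span, when in fact the coset decomposition in its proof works for every finite $A$.

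The concrete gap is in your degenerate case: the reduction lemma $\topent_{\Z^d}(\Omega(A))=\topent_{\Z^r}(\Omega(B))$ is asserted (``a pattern-counting computation then yields\dots'') but never proved, and you flag it yourself as the one non-formal step. It is a genuine lemma, not a formality: the box $[n]^d$ meets the cosets of $L$ in slices that are not cubes, so one needs monotonicity of pattern counts under inclusion, submultiplicativity under partitions, and F{\o}lner-type control of the thin boundary slices to see that the coset-wise counts multiply out to $\exp\parenv{n^d\,\topent_{\Z^r}(\Omega(B))+o(n^d)}$. As written, then, the proposal is incomplete precisely in a case that the paper's intended hypotheses render trivial. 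On the positive side, your full-rank case is complete and correct, and it genuinely goes beyond the paper: the common refinement $A=PB$, $MA=QB$ with $P,Q\in\End(\Z^d)$, followed by two applications of Proposition~\ref{prop:MatEnt}, proves affine invariance for every rational invertible $M$ with $MA\subseteq\Z^d$ --- e.g.\ $M=\operatorname{diag}(1/2,3)$ and $A=\{(0,0),(2,0),(0,1)\}$, where neither $M$ nor $M^{-1}$ is an integer matrix, so Proposition~\ref{prop:MatEnt} alone is silent. If you supply a proof of the reduction lemma you will have strengthened the theorem to all real invertible $M$ with $T(A)\subseteq\Z^d$; as it stands, the degenerate case is not proved.
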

\begin{proof}
Follows directly from Propositions~\ref{fact:ShiftInv} and \ref{prop:MatEnt}.
\end{proof}

\begin{definition}
\label{def:Aff}
Let $A=\mathset{x_1,x_2,\dots,x_n}$ be a finite set of points, contained in some vector space $V$ over $\R$. The affine dimension of $A$, denoted by $\dim_{\text{aff}}(A)$, is defined to be the dimension of the vector space $V_A$, where 
\[ V_A\eqdef \mathset{\sum_{i=1}^n \alpha_i x_i~:~\alpha_1,\alpha_2,\dots, \alpha_d\in \R \such
\sum_{i=1}^n \alpha_i =0}.\]
We say that $A$ has full affine dimension if $\dim_{\text{aff}}(A)+1=\abs{A}$ and that the vectors composing $A$ are affinely independent.
\end{definition}

\begin{theorem}
\label{th:AffEqCEnt}
Let $d\geq 1$ and $A,B\subseteq \Z^d$ be finite sets with full affine dimension such that $\abs{B}=\abs{A}=d'\leq d+1$.
Then, $\topent(\Omega(A))=\topent(\Omega(B))$. Furthermore, If $d'=3$
\[ \topent(\Omega(A))=\topent(\Omega(B))=\frac{1}{4\pi^2}\intop_0^{2\pi}\intop_0^{2\pi}\log \abs{1+e^{ix}+e^{iy}}dxdy. \]
\end{theorem}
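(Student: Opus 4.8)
The plan is to reduce both $\Omega(A)$ and $\Omega(B)$, up to entropy, to a single canonical model and then identify its entropy. Write $A=\{a_0,a_1,\dots,a_{d'-1}\}$ and set $v_i\eqdef a_i-a_0$ for $1\le i\le d'-1$; the full affine dimension hypothesis says exactly that $v_1,\dots,v_{d'-1}$ are linearly independent over $\R$. Let $S_{d'}\eqdef\{0,e_1,\dots,e_{d'-1}\}\subseteq\Z^d$ denote the standard corner simplex, where $e_i$ is the $i$-th standard basis vector. I claim there is an injective affine transformation $T(x)=Mx+a_0$ with $M\in\End(\Z^d)$ invertible over $\R$ and $T(S_{d'})=A$. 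Granting this, Theorem~\ref{th:AffInvEnt} applied to $S_{d'}$ gives $\topent(\Omega(S_{d'}))=\topent(\Omega(T(S_{d'})))=\topent(\Omega(A))$; running the same argument for $B$ yields $\topent(\Omega(B))=\topent(\Omega(S_{d'}))$, and the first assertion follows at once, since the common value does not depend on $A$ or $B$.

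To build $M$, first I would complete $v_1,\dots,v_{d'-1}$ to an $\R$-basis of $\R^d$ using integer vectors: since these vectors span a $(d'-1)$-dimensional subspace and $d'-1\le d$, the Steinitz exchange lemma lets me pick $d-(d'-1)$ of the standard basis vectors $e_j$ so that $v_1,\dots,v_{d'-1}$ together with them form a basis. Taking $M$ to be the integer matrix whose $i$-th column is $v_i$ for $i\le d'-1$ and whose remaining columns are the chosen $e_j$ gives $M\in\End(\Z^d)$ with $\det M\neq 0$ and $Me_i=v_i$, hence $M(S_{d'})=\{0,v_1,\dots,v_{d'-1}\}$ and $T(S_{d'})=a_0+\{0,v_1,\dots,v_{d'-1}\}=A$. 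This $M$ is exactly of the form required by Proposition~\ref{prop:MatEnt}, which underlies Theorem~\ref{th:AffInvEnt}.

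For the case $d'=3$ the canonical model is $S_3=\{0,e_1,e_2\}$, whose support lies in the first two coordinates. I would observe that any $\omega\in\Omega(S_3)$ leaves the last $d-2$ coordinates fixed, so the associated permutation preserves every slice $L_c\eqdef\{n\in\Z^d:(n_3,\dots,n_d)=c\}$ for $c\in\Z^{d-2}$, and its restriction to each $L_c\cong\Z^2$ is a permutation restricted by $\{0,e_1,e_2\}$, i.e.\ by $A_L$ (indeed $S_3=A_L$ when $d=2$). Consequently a pattern of $\Omega(S_3)$ on a box $[n]=[n_1]\times\cdots\times[n_d]$ amounts to an independent choice, on each of the $n_3\cdots n_d$ two-dimensional layers, of a pattern in $B_{(n_1,n_2)}(\Omega(A_L))$, so that $\abs{B_n(\Omega(S_3))}=\abs{B_{(n_1,n_2)}(\Omega(A_L))}^{n_3\cdots n_d}$. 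Dividing by $\abs{[n]}=n_1\cdots n_d$ and passing to the limit (using Fact~\ref{prop:fekete} that this is a genuine limit), the factors $n_3\cdots n_d$ cancel and
\[ \topent(\Omega(S_3))=\lim_{n_1,n_2\to\infty}\frac{\log\abs{B_{(n_1,n_2)}(\Omega(A_L))}}{n_1 n_2}=\topent(\Omega(A_L)). \]
Combining this with Theorem~\ref{th:SolL} produces the claimed honeycomb integral and finishes the proof.

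I expect the $d'=3$ reduction, rather than the affine-invariance step, to be the main obstacle: one must justify carefully that the slicing is genuinely decoupled (that the moves in $S_3$ never mix distinct layers) and that the pattern count factorizes exactly, so that the $(d-2)$-dimensional stack of independent two-dimensional systems contributes no extra entropy per unit volume. The construction of $M$ is routine linear algebra, but it is worth emphasizing that $M$ need only be an injective $\Z^d$-endomorphism with nonzero determinant, not an element of $\GL_d(\Z)$, which is precisely the generality in which Proposition~\ref{prop:MatEnt} is stated.
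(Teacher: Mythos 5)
Your proof is correct and follows essentially the same route as the paper's: reduce both $A$ and $B$ by an injective integral affine map to the canonical corner set $\{0,e_1,\dots,e_{d'-1}\}$ using Theorem~\ref{th:AffInvEnt}, then for $d'=3$ observe that the dynamics decompose into independent two-dimensional slices each of which is a copy of $\Omega(A_L)$, so the pattern counts factorize and Theorem~\ref{th:SolL} supplies the integral. The only cosmetic differences are that the paper first translates $A$ and $B$ to contain $0$ and completes $A\setminus\{0\}$ to an integer basis of $\mathbb{Q}^d$, whereas you absorb the translation into the affine map and complete the difference vectors to a basis by Steinitz exchange.
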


\begin{proof}

By the previous theorem it is enough to show that there exist two invertible integral affine maps $T_1$ and $T_2$ and a set $C_{d',d}\subseteq\Z^d$ such that $T_1(C_{d',d})=A $ and $T_2(C_{d',d})=B$. By applying a translation to $A$ and $B$, we can assume that both $A$ and $B$ contain 0.  Then it is enough to prove that there exist endomorphisms $M_1$ and $M_2$ of $\Z^d $ and a set $C_{d',d}\subseteq\Z^d$ such that  $T_1(C_{d',d})=A $ and $T_2(C_{d',d})=B$. Equivalently, if $A' = A \setminus \mathset{0}$ and $B' = B \setminus \mathset{0}$, then both $A'$ and $B'$ are linearly independent sets. Complete each of them to a basis of $\mathbb{Q}^d$ composed from integer vectors, $A''=(a_1,\dots,a_d)$ and $B''=(b_1,\dots,b_d)$, where $a_1,\dots, a_{d'-1}$ and $b_1,\dots, b_{d'-1}$ are the vectors in $A'$ and $B'$ respectively
 . Let $C_{d,d'}\eqdef \mathset{0,e_1,e_2,\dots,e_{d'-1}}$ where $e_i$ is the standard $i$'th unit vector. Let $M_1$ and $M_2$ be the matrices whose rows are the elements of $A''$ and $B''$ respectively. Clearly, $M_1$ and $M_2$ map the standard basis $e_1,\dots,e_d$ to $A''$ and $B''$ respectively and in particular $M_1(C_{d',d})=A$ and $M_2(C_{d',d})=B$ as desired.

If $d=2$ and $d'=3$ we note that $C_{2,3}=\mathset{(0,0),(0,1),(1,0)}$, which in the notation of Section~\ref{A_L}, is the set $A_L$. By Theorem~\ref{th:SolL}, 
\[  \topent(\Omega(A))=\topent(\Omega(A_L))=\frac{1}{4\pi^2}\intop_0^{2\pi}\intop_0^{2\pi}\log\abs{1+e^{ix}+e^{iy}}dxdy.\]
If $d'=3$ and $d\geq 3$, we note that $C_{d,3}=\mathset{0,e_1,e_2}$ and $\Omega(C_{d,3})$ is in fact 2-dimensional as for any permutation $\pi\in \Omega(C_{d,3)}$ and an index $m\in \Z^{d-2}$, the restriction of $\pi$ to $\Z^2\times\mathset{m}$ is a restricted permutation of $\Z^2\times\mathset{m}$. Thus, for any $n=(n_1,n_2,\dots,n_d)$ we have $\abs{B_{n}(\Omega(C_{d,3}))}=\abs{B_{n_1,n_2}(\Omega(C_{2,3})}^{\prod_{i=3}^d n_i}$ and therefore $\topent(\Omega(C_{2,3}))=\topent(\Omega(C_{d,3}))$. This completes the proof.
\end{proof}

\begin{corollary}
\label{cor:UnifBound}
For $A\subseteq \Z^d$ which is not contained in a line 
\[  \topent(\Omega(A))\geq \frac{1}{4\pi^2}\intop_0^{2\pi}\intop_0^{2\pi}\log\abs{1+e^{ix}+e^{iy}}dxdy. \]
\end{corollary}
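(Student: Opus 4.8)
The plan is to deduce the bound from Theorem~\ref{th:AffEqCEnt} together with an elementary monotonicity property of the entropy under inclusion of the restricting set. First I would observe that since $A$ is not contained in a line, its affine dimension is at least $2$, so by Definition~\ref{def:Aff} the set $A$ contains three affinely independent points $a_0,a_1,a_2$. Setting $A'\eqdef\mathset{a_0,a_1,a_2}$, this is a subset of $A$ with $\abs{A'}=3$ and $\dim_{\text{aff}}(A')=2=\abs{A'}-1$; that is, $A'$ has full affine dimension. In particular we are forced into $d\geq 2$, and $d'\eqdef\abs{A'}=3\leq d+1$, so $A'$ meets the hypotheses of Theorem~\ref{th:AffEqCEnt}.

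The key intermediate step is monotonicity: if $A'\subseteq A$ then $\topent(\Omega(A'))\leq\topent(\Omega(A))$. This holds because a permutation restricted by $A'$ is a fortiori restricted by $A$ (as $\pi(n)-n\in A'\subseteq A$), so, viewing both shift spaces inside $A^{\Z^d}$ via the inclusion $A'\hookrightarrow A$, we have $\Omega(A')\subseteq\Omega(A)$. Consequently every pattern of $\Omega(A')$ is a pattern of $\Omega(A)$, giving $\abs{B_n(\Omega(A'))}\leq\abs{B_n(\Omega(A))}$ for every $n\in\N^d$. Dividing by $\abs{[n]}$, taking logarithms, and passing to the limit (which exists by Fact~\ref{prop:fekete}) yields the claimed inequality of entropies.

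Finally I would invoke the \emph{furthermore} clause of Theorem~\ref{th:AffEqCEnt} in the case $d'=3$, which evaluates $\topent(\Omega(A'))$ exactly as $\frac{1}{4\pi^2}\intop_0^{2\pi}\intop_0^{2\pi}\log\abs{1+e^{ix}+e^{iy}}dxdy$. Combining this with the monotonicity step gives $\topent(\Omega(A))\geq\topent(\Omega(A'))=\frac{1}{4\pi^2}\intop_0^{2\pi}\intop_0^{2\pi}\log\abs{1+e^{ix}+e^{iy}}dxdy$, which is precisely the desired bound.

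Since each ingredient is already available, I do not expect a serious obstacle; the corollary is essentially a packaging of Theorem~\ref{th:AffEqCEnt} with monotonicity. The only point requiring genuine care is the monotonicity argument, where one must remember that the two systems live over different alphabets ($A'$ versus $A$) and compare their pattern counts only after the natural inclusion $A'\hookrightarrow A$. One should also verify the soft geometric fact that ``not contained in a line'' forces $\dim_{\text{aff}}(A)\geq 2$, equivalently the existence of three non-collinear points, which is immediate from Definition~\ref{def:Aff}.
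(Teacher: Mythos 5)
Your proposal is correct and follows essentially the same route as the paper's own proof: extract a three-point subset $A'\subseteq A$ of full affine dimension, use $\Omega(A')\subseteq\Omega(A)$ to get monotonicity of entropy, and apply the $d'=3$ case of Theorem~\ref{th:AffEqCEnt}. The only difference is that you spell out the monotonicity and alphabet-inclusion details that the paper dismisses with ``clearly,'' which is fine.
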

\begin{proof}
If $A$  is not contained in a line, there exists $\mathset{a_1,a_2,a_3}=A'\subseteq A$ with full affine dimension. Clearly $\topent(\Omega(A))\geq \topent(\Omega(A')) $ as  $\Omega(A')\subseteq \Omega(A)$. By Theorem~\ref{th:AffEqCEnt}
\[\topent(\Omega(A))\geq \topent(\Omega(A')) = \frac{1}{4\pi^2}\intop_0^{2\pi}\intop_0^{2\pi}\log \abs{1+e^{ix}+e^{iy}}dxdy. \] 
\end{proof}

			\subsection{Local and Global Admissibility}
	\label{LocGlob}
Given an $SFT$, $\Omega\subseteq \Sigma^{\Z^d}$, a finite set $U\subseteq \Sigma^d$, and a pattern $v\in \Sigma^U$, a natural question is whether this pattern is globally admissible, i.e., whether there exists $\omega\in \Omega$ such that the restriction of $\omega$ to $U$ is the pattern $v$. Generally, this question does not have a simple answer. It is proved in \cite{Rob71} that in the general case, it is not decidable whether a finite pattern is globally admissible, i.e., there is no algorithm that can decide  whether a finite pattern is globally admissible or not.

If $\Omega$ is defined by the set of forbidden patterns $F$,  a necessary condition for global admissibility is local admissibility. We say that a pattern $v\in \Sigma^U$ is locally admissible if it does not contain any of the forbidden patterns in $F$. That is, for any forbidden pattern $p \in F \cap \Sigma^{U'}$ and $n\in \Z^d$ such that $U'\subseteq \sigma_n(U)$, $\parenv{\sigma_n(v)}(U')\neq p$. 
Clearly, if a pattern is globally admissible, it is also locally admissible. However, local admissibility does not imply global admissibility. Example~\ref{Ex:LocAd} below provides a pattern which is locally admissible but not globally admissible in the context of restricted permutations.

For a finite restricting set $A\subseteq \Z^d$, a pattern $v\in A^U$ is identified with a function $f_v: U\to U+A$, defined by $f_v(n)=n+v(n)$. The pattern $v$ is globally admissible if it is the restriction of some $\omega\in \Omega(A)$. For such $\omega\in \Omega(A)$, we have that $f_v$ is the restriction of the permutation $\pi_\omega\in \Omega(A)$ to the set $U$. Thus, global admissibility of $v$ is equivalent to the existence of a permutation $\pi\in S(\Z^d)$, restricted by $A$, extending $f_v$.

In Proposition~\ref{prop:NecessExt} we present a description of a necessary locally checkable  condition for global admissibility. We later discuss the the relation of this condition to the concept of local admissibility. In Proposition~\ref{prop:compli2} we show that this  locally checkable necessary condition for global admissibility is sufficient when considering rectangular patterns in two cases of restricting sets. We use these results for counting rectangular patterns in in Section~\ref{PM_char}.

\begin{definition}
Let $A,U\subseteq \Z^d$ be some finite sets. The boundary of $U$ with respect to $A$, denoted by $\partial (U,A)$, is defined to be the set of all indices $u\in U$ for which $u-A\not\subseteq U$. The interior of $U$ with respect to $A$ is defined to be $\Int(U,A)\eqdef U\setminus \partial (U,A)$
\end{definition}

\begin{proposition}
\label{prop:NecessExt}
Let $A\subseteq \Z^d$ be a finite non-empty restricting set and $U\subseteq \Z^d$ be some set. If a pattern $v\in A^U$ is globally admissible, then $f_v:U\to U+A$ defined by $f(n)=n+v(n)$ is injective and $\Int(U,A)\subseteq \ima(f_v)$. 
\end{proposition}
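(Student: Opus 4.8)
The plan is to unpack the definition of global admissibility and reduce both assertions to the bijectivity of an extending permutation. By definition, $v\in A^U$ is globally admissible precisely when there is some $\omega\in\Omega(A)$ with $\omega|_U=v$; equivalently, the associated permutation $\pi=\pi_\omega\in S(\Z^d)$, which satisfies $\pi(n)-n\in A$ for every $n\in\Z^d$, agrees with $f_v$ on $U$, i.e.\ $\pi(n)=n+v(n)=f_v(n)$ for all $n\in U$. Thus $f_v=\pi|_U$, and I will extract both claimed properties of $f_v$ from the single fact that $\pi$ is a bijection of $\Z^d$ restricted by $A$.

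Injectivity is immediate: $\pi$ is a permutation of $\Z^d$, hence injective, and $f_v$ is its restriction to $U$, so $f_v$ is injective as well.

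The substantive step is the containment $\Int(U,A)\subseteq\ima(f_v)$. Fix $u\in\Int(U,A)$, so that $u-A\subseteq U$ by the definition of the interior. Since $\pi$ is surjective, choose $m\in\Z^d$ with $\pi(m)=u$. Because $\pi$ is restricted by $A$, the displacement $a\eqdef\pi(m)-m=u-m$ lies in $A$, and therefore $m=u-a\in u-A$. The interior hypothesis $u-A\subseteq U$ then forces $m\in U$, whence $f_v(m)=\pi(m)=u$ and $u\in\ima(f_v)$, as required.

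The only real subtlety is recognizing that the definition of $\Int(U,A)$ is engineered exactly for this argument: the condition $u-A\subseteq U$ is precisely what guarantees that any preimage of $u$ under a permutation restricted by $A$ must already lie in $U$, which is what lets the surjectivity of the global extension be localized to $U$. No compactness or limiting argument is needed; in contrast to the general (undecidable) problem of global admissibility, both conclusions here are purely set-theoretic consequences of possessing a bijective restricted extension, which is exactly what makes the resulting condition locally checkable.
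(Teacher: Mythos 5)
Your proof is correct and follows essentially the same route as the paper: injectivity of $f_v$ is inherited from the extending permutation, and the interior containment comes from the fact that any $\pi$-preimage of a point $u\in\Int(U,A)$ lies in $u-A\subseteq U$ because $\pi$ is restricted by $A$. The only difference is cosmetic: you argue pointwise for a fixed $u$, while the paper phrases the same fact as the set inclusion $\pi_\omega^{-1}(\Int(U,A))\subseteq \Int(U,A)-A\subseteq U$.
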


\begin{proof}
Assume that $v$ is globally admissible, and let $\omega\in \Omega(A)$ be such that $\omega([n])=v$. We note that $f_v$ is the restriction of $\pi_\omega$ to $U$, where $\pi_\omega:\Z^2\to \Z^2$ is the permutation defined by $\pi_\omega(m)=m+\omega(m)$. Clearly, $f_v$ is injective since $\pi_\omega$ is injective (as a permutation). Since $\pi_\omega$ is surjective, $\Int(U,A)\subseteq \ima( \pi_\omega)$. On the other hand, by the definition if $\Int(U,A)$ 
\[ \pi_\omega^{-1}(\Int(U,A)) \subseteq \Int(U,A)- A \subseteq U,\]
as $\pi_\omega$ is restricted by $A$.  Thus, $\Int(U,A)\subseteq \pi_\omega(U)=\ima(f_v).$
\end{proof}

The locally checkable condition  presented in Proposition~\ref{prop:NecessExt} are necessary for global admissibility. The following example shows that in genreal, they are not sufficient.

\begin{example}
\label{Ex:LocAd}
Consider the restricting set $A_+=\mathset{(0,\pm 1),(\pm 1,0)}$ reviewed in Section~\ref{A_+} and the set $U\eqdef [5]\times [3]\setminus \mathset{(1,2)}$. It is easy to see that restricted function $f:U\to U+A_+$ presented  in Figure~\ref{fig:LocAdmiss} is injective. Furthermore, $\Int(U,A_+)$ is an empty set and therefore it is contained in the image of $f$ in a trivial way. So $f$ is locally admissible. Assume to the contrary that there exists $\pi\in\Omega(A_+))$ extending $f$. We note that for both $(1,1)$ and $(3,1)$, the only possible pre-image is $(2,1)$. Hence, $\pi$ cannot be surjective which is a contradiction. This shows that $f$ is not globally admissible.

\begin{figure}
 \centering
  \
  \includegraphics[width=65mm, scale=0.1]{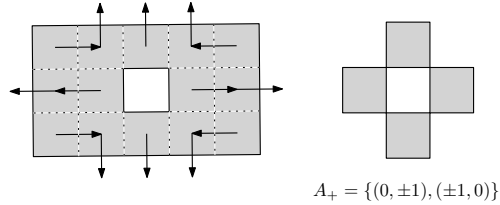}
  \caption{A locally admissible pattern which is not globally admissible where the restricting set is $A_+$.  }
  \label{fig:LocAdmiss}
\end{figure}
\end{example}

Having a locally checkable necessary condition for global admissibility of patterns, it is natural to ask whether it is equivalent to local admissibility. The answer to this question is yes and no. We recall that our definition for local admissibility of patterns in SFTs uses the finite set of forbidden patterns which defines the SFT (see the beginning of Section \ref{LocGlob}). Any SFT can equivalently be defined by infinitely many sets of forbidden patterns and therefore the conditions of local admissibility does not depend only on the SFT itself. One natural set of forbidden patterns which defines $\Omega(A_+)$ is  $F_{A_+}\eqdef \mathset{w\in A_+^{-A_+} ~:~ \abs{\mathset{n\in -A_+ ~:~ w(n)+n=0}}\neq 1}\subseteq A_+^{-A_+}$. With respect to this set, the pattern $v=\sparenv{(1,0),(0,0)}\in \mathset A^{\mathset{(0,0),(1,0)}}$ is locally admissible since for every $n\in \Z^2$, $\sigma_n(-A_+)\not\subseteq \mathset{(0,0),(1,0)}$ so the condition for local admissibility is satisfied in a trivial way. We note that $v$ does not satisfy the locally checkable condition from Proposition \ref{prop:NecessExt} as $f_v$ is not injective. This shows that the two conditions are not equivalent. On the other hand the set \[F_{A_+}'\eqdef F_{A_+}\bigcup \mathset{v\in A^{\mathset{-a,-b}} ~:~ a,b\in A, a\neq b, v(a)+a=v(b)+b }\]
also defines $\Omega(A)$ and it is an easy exercise to check that with this set of forbidden patterns, the the necessary locally checkable condition and local admissibility are equivalent.

\begin{proposition}
\label{prop:compli2}
For $A\in \mathset{A_\oplus, A_L}$, where $A_L$ is defined in Section~\ref{CHA:preliminaries} and $A_\oplus\eqdef \mathset{(0,0),(0,\pm 1), (\pm 1,0)}$, let $(n_1,n_2)=n\in \N^2$ and $v\in A^{[n]}$ be a rectangular pattern. Then $v$ is globally admissible  if and only if $f_v:[n]\to [n]+A$ defined by $f_v(m)=m+v(m)$ is injective and $\Int([n],A)\subseteq \ima(f_v)$.
\end{proposition}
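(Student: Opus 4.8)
The nontrivial content is the \emph{if} direction, since the \emph{only if} direction is precisely Proposition~\ref{prop:NecessExt}. So the plan is to assume that $f_v\colon [n]\to [n]+A$ is injective with $\Int([n],A)\subseteq\ima(f_v)$ and to produce a permutation $\pi\in\Omega(A)$ extending it. I would do this by reading off the partial orbit structure that $f_v$ imposes on the box and then completing it to a global restricted permutation by sending the finitely many ``loose ends'' straight out to infinity and acting as the identity everywhere else.

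First I would record the combinatorial picture on $[n]$. Since $f_v$ is injective, following $f_v$ decomposes $[n]$ into a disjoint union of directed path-segments together with isolated fixed points (and, for $A_\oplus$, two-cycles). Call $m\in[n]$ a \emph{sink} if $f_v(m)\notin[n]$ and a \emph{source} if $m\notin\ima(f_v)$; these are exactly the free ends of the segments. The role of the hypothesis $\Int([n],A)\subseteq\ima(f_v)$ is to force every source into $\partial([n],A)$, while the restriction $f_v(m)\in m+A$ places every sink on the geometric boundary of $[n]$ as well. Consequently each loose end $p$ has at least one $A$-neighbor $q=p+a$ (with $a\in A$) lying outside $[n]$, which is exactly the room needed to continue a path out of, or feed a path into, the box.

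Next I would define $\pi$: set $\pi=f_v$ on $[n]$; continue each sink by a straight ray leaving the box through its exterior $A$-neighbor and marching to infinity in a fixed coordinate direction; feed each source by a straight ray arriving from infinity through an exterior $A$-neighbor; and set $\pi=\mathrm{id}$ on every remaining vertex. For $A_L$ this is clean, since every move is monotone (north or east): sinks sit on the north and east edges and sources on the south and west edges, so the outgoing rays occupy the north and east exterior half-strips and the incoming rays the south and west ones. These four strips, the box, and the identity region are pairwise disjoint, each exterior line carries at most one ray, and a direct check then shows that $\pi$ is a bijection all of whose steps lie in $A_L$.

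The case $A=A_\oplus$ is where the real work lies, and I expect the boundary bookkeeping there to be the main obstacle. Now moves run in all four directions, so a single frame vertex can be simultaneously a source and a sink sharing its unique exterior neighbor $q$; here I would use that $A_\oplus$ is symmetric and resolve the conflict as the transposition $p\leftrightarrow q$, which is legitimate because $q=p+a$ with $a,-a\in A_\oplus$. The structural facts that make the completion consistent are that sources and sinks still lie in $\partial([n],A_\oplus)$, that each exterior vertex adjacent to the frame has exactly one neighbor inside $[n]$ (so no two loose ends compete for the same exterior vertex), and that the four exterior strips remain pairwise disjoint; the corner vertices of $[n]$, which have two exterior neighbors, are routed through whichever neighbor their (unique) role dictates, with the other neighbor left to the identity. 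Granting these checks, $\pi\in\Omega(A_\oplus)$ extends $f_v$, which completes the proof.
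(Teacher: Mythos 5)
Your proposal is correct and takes essentially the same route as the paper's proof: extend $f_v$ by straight perpendicular rays to/from infinity through the boundary loose ends, act as the identity elsewhere, and resolve the simultaneous source-and-sink conflict in the $A_\oplus$ case by a two-cycle with the exterior landing vertex — which is exactly the paper's rule mapping a vertex of $O_v$ back into an adjacent hole of $H_v$, since for $A_\oplus$ the unique box-neighbor of a landing vertex is the sink that produced it. If anything, your explicit handling of the corner vertices is slightly more careful than the paper's, which asserts a unique exterior direction $e_m$ for every vertex of $\partial([n],A_\oplus)$, a claim that fails at the four corners.
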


\begin{proof}
We will show the proof for $A=A_\oplus$, the proof in the case that $A=A_L$ follows a similar idea. In this proof we assume that $n_1,n_2>3$, so the rectangle is sufficiently large for our needs. The claim is true for smaller $n_1$ and $n_2$ as well and not it is not difficult to check by hand. 
The first direction (global admissibility implies injective $f_v$ and $\Int(U,A)\subseteq \ima(f_v)$) is is true by Proposition~\ref{prop:NecessExt}. 

For the other direction, we first note that $\Int([n],A_\oplus)=[1,n_1-1]\times [1,n_2-1]$. 
Assume that $\pi_v$ is injective and $[1,n_1-1]\times [1,n_2-1]\subseteq \ima(\pi_v)$, we need to find a restricted permutation $\pi\in (\Omega(A_\oplus))$ such that the restriction $\pi([n])$ is $\pi_v$. Consider the sets $O_v=\ima(\pi_v)\setminus [n]$, and $H_v=[n]\setminus \ima(\pi_v)$.

We observe that $v\in A_\oplus^{[n]}$, which implies that $\ima(\pi_v)\subseteq [n]+A_\oplus$. Thus, $O_v\subseteq ([n]+A_\oplus)\setminus [n]$. We denote $([n]+A_\oplus)\setminus [n]$ by $\partial^{A_\oplus} [n]$. By the assumption, $\pi_v$ satisfies the second condition. Hence, $H_v\subseteq \partial([n],A_\oplus)=[n]\setminus [[1,n_1-1]\times [1,n_2-1]]$. We observe that for $m\in \partial^{A_\oplus} [n] $, there exists a unique vector $e_m\in {\mathset{(0,\pm 1), (\pm 1,0)}}$ such that $m+e_m\in [n]$. Similarly, for $m\in \partial ([n],A_\oplus)$, there exists a unique vector $e_m\in {\mathset{(0,\pm 1), (\pm 1,0)}}$ such that $m+e_m\notin [n]$. We now define $\pi$ on $\Z^2\setminus [n]$. 
\begin{itemize}
\item For $m\in O_v$ such that $m+e_m\in H_v$, define $\pi(m)=m+e_m$.
\item For $m\in O_v$ such that $m+e_m\notin H_v$, we define $\pi(m)=m-e_m$. Furthermore, for all $k\in \N$ we define $\pi(m-k\cdot e_m)= m-(k+1)\cdot e_m$.
\item For $m\in H_v$ such that $\pi(m+e_m)\notin O_v$, we define $\pi(m+k\cdot e_m)=m+(k-1)\cdot e_m$ for all $k\in \N$.
\item For any index $m\in \Z^2\setminus [n]$ not defined in the first three items, we define $\pi(m)=m$.
\end{itemize} 
Clearly, $\pi$ is restricted by $A_\oplus$ as $e_m\in A_\oplus$ for any $m\in H_v \cup O_v$. It may be verified that $\pi$ is bijective.
\begin{figure}
 \centering
  \
  \includegraphics[width=100mm, scale=0.1]{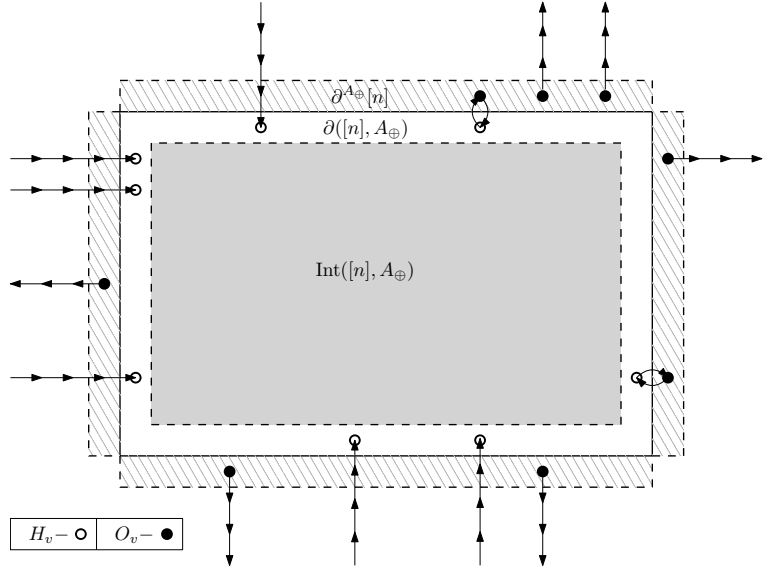}
  \caption{The extension of $\pi_v$ to a $\pi\in \Omega(A_\oplus)$. }
  \label{fig:Ext}
\end{figure}
\end{proof}
See Figure~\ref{fig:Ext} for a demonstration of the procedure of defining $\pi$.

			\subsection{The Entropy of Injective and Surjective Functions}
	\label{InjSurj}
So far, we have explored restricted permutations of graphs which are bijective functions. In this part of the work we examine the related models of restricted injective and surjective functions on graphs. We will show that under similar assumptions as in the case of permutations, the spaces of restricted injective and surjective  functions also have the structure of a topological dynamical system. Finally, we examine the entropy of restricted injective/surjective functions on $\Z^d$, compared to the entropy of restricted permutations.

Let $G=(V,E)$ be some locally finite and countable directed graph. Similarly to  Section~\ref{CHA:preliminaries}, a function $f:V\to V$ is said to be restricted by $G$ if $(v,f(v))\in E$ for all $v\in V$. We define the spaces of restricted injective and surjective functions of $G$ to be 
\[ \Omega_I(G)\eqdef \mathset{\varphi:V\to V~:~ \varphi \text{ is injective and restricted by }G},\]
and
\[ \Omega_S(G)\eqdef \mathset{\varphi:V\to V~:~\varphi \text{ is surjective and restricted by }G},\]
respectively.

The spaces $\Omega_I(G)$ and $\Omega_S(G)$ are compact topological spaces, when equipped with the product topology (when $V$ has the discrete topology).  If $\Gamma$ is a group acting on $G$ by graph isomorphisms, it induces a homeomorphic group action on $\Omega_I(G)$ and  $\Omega_S(G)$ by conjugation. This is proven in a similar fashion as in the case of restricted permutations (see Section~\ref{CHA:preliminaries}). Therefore, we will leave the details to the reader.

As in most of this work, we focus on the cases where $G=(\Z^d,E_A)$ for some finite $A\subseteq \Z^d$. In this case, we shorten the notation and use  $\Omega_I(A)$ for $\Omega_I(G_A)$ and  $\Omega_S(A)$ for $\Omega_S(G_A)$.

\begin{proposition}
For any finite non-empty $A\subseteq \Z^d$, $\Omega_I(A)$ and $\Omega_S(A)$ are SFTs, when we identify a restricted function  $\varphi$ with an element of $A^{\Z^d}$ by $\omega_\varphi(n)=\varphi(n)-n$. 
\end{proposition}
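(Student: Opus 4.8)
The plan is to exhibit, for each of $\Omega_I(A)$ and $\Omega_S(A)$, an explicit \emph{finite} set of forbidden patterns over the alphabet $A$, thereby presenting each space as an SFT. Throughout I identify $\omega\in A^{\Z^d}$ with the restricted function $\varphi_\omega(n)=n+\omega(n)$, so that $\Omega_I(A)$ (resp.\ $\Omega_S(A)$) is exactly the set of $\omega$ for which $\varphi_\omega$ is injective (resp.\ surjective). The common principle is that, because every displacement lies in the finite set $A$, both injectivity and surjectivity are \emph{locally} decidable, and it is precisely this locality that I would make quantitative.

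For $\Omega_I(A)$, I would first observe that a collision $\varphi_\omega(n)=\varphi_\omega(m)$ with $n\neq m$ forces $n-m=\omega(m)-\omega(n)\in(A-A)\setminus\mathset{0}$, a finite set; hence collisions can occur only between points at bounded distance. Accordingly, for each $k\in(A-A)\setminus\mathset{0}$ I would forbid, on the two-point shape $\mathset{0,k}$, every pattern $w\colon\mathset{0,k}\to A$ with $w(0)-w(k)=k$, since after translating a colliding pair to the origin this is exactly the recorded configuration. As $A$ is finite, the set $(A-A)\setminus\mathset{0}$ is finite and for each $k$ there are finitely many admissible $w$, so the resulting family $F_I$ is finite. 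A configuration $\omega$ avoids all translates of $F_I$ iff $\varphi_\omega$ has no collision, i.e.\ iff $\varphi_\omega$ is injective, which identifies $\Omega_I(A)$ with the SFT defined by $F_I$.

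For $\Omega_S(A)$, the crucial point is that the preimage of any $p\in\Z^d$ is contained in the finite set $p-A$: indeed $\varphi_\omega(n)=p$ forces $p-n=\omega(n)\in A$. Consequently $p$ lies in the image of $\varphi_\omega$ iff there exists $a\in A$ with $\omega(p-a)=a$, a condition depending only on the restriction of $\omega$ to $p-A$. Centering at $p$ and taking the window $W\eqdef -A$, I would forbid every pattern $w\colon W\to A$ for which $w(-a)\neq a$ for all $a\in A$ (the patterns witnessing that the center is missed). This yields a finite family $F_S$, and $\omega$ avoids all its translates iff every $p$ is hit, i.e.\ iff $\varphi_\omega$ is surjective; hence $\Omega_S(A)$ is the SFT defined by $F_S$.

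The only non-routine step is recognizing that surjectivity, usually a global constraint, is locally checkable here; once the bound ``every preimage of $p$ lies in $p-A$'' is in hand, the remaining verifications are immediate. As a consistency check one may note that $F_I\cup F_S$ defines $\Omega(A)$, recovering the fact that $\Omega(A)$ is an SFT, since a restricted function is a permutation exactly when it is both injective and surjective.
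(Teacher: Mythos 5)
Your proof is correct. The paper itself omits the argument (it declares the proposition \quo{quite obvious and very similar to the case of permutations} and leaves it to the reader), and your explicit finite families $F_I$ and $F_S$ are exactly the intended construction: injectivity failures are witnessed on two-point shapes $\mathset{0,k}$ with $k\in (A-A)\setminus\mathset{0}$, and surjectivity failures on the window $-A$, in precise agreement with the forbidden-pattern sets the paper does write down elsewhere (the sets $F_{A_+}$ and $F_{A_+}'$ in the discussion of local versus global admissibility) and with its observation that $\Omega(A)=\Omega_I(A)\cap\Omega_S(A)$.
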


The proof of this statement is quite obvious and very similar to the case of permutations. Therefore, we leave it for the reader.

For any finite non-empty set $A\subseteq\Z^d$, we note that $\Omega(A)$ is exactly the intersection of $\Omega_I(A)$ and $\Omega_S(A)$. Furthermore, we observe that $\Omega(A)$ is strictly contained in both  $\Omega_I(A)$ and $\Omega_S(A)$. Thus, $\topent(\Omega(A))\leq \min\mathset{\topent(\Omega_I(A)),\topent(\Omega_S(A))}$. This gives rise to the natural question, whether the entropy of restricted permutations can be strictly smaller. Theorem~\ref{th:InjSurj} provides a negative answer to the above question.

\begin{definition}
Consider a measurable space $(X,\cF)$ and a group $\Gamma$ acting on $X$ by measurable transformations. A probability measure $\mu:\cF\to [0,1]$ is said to be invariant under the action of $\Gamma$ if for any measurable set $A\subseteq \cF$ and $\gamma\in \Gamma$ we have 
\[\mu\parenv{\gamma^{-1}(A)}=\mu(A). \]
 We define $M_{\Gamma}(X)$ to be the set of all probability measures on $X$ which are invariant under the action of $\Gamma$. A measure $\mu \in M_\Gamma(X)$ is called ergodic if it assigns invariant sets with $0$ or $1$. That is, $\gamma^{-1}A=A$ for all $\gamma\in \Gamma$  implies that $\mu(A)\in \mathset{0,1}$.
\end{definition}

\begin{proposition}
\label{prop:SuppMeasure}
If $\pi$ is a random function on $\Z^d$ restricted by $A$ which is either almost surely injective or almost surely surjective and its distribution is shift-invariant, then almost-surely $\pi$ is a permutation of $\Z^d$. Equivalently: The support of any shift-invariant measure on $\Omega_I(A)$ or $\Omega_S(A)$ is contained in $\Omega(A)$.
\end{proposition}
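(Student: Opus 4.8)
The plan is to reduce the whole statement to the computation of a single expectation: the expected number of preimages of a fixed point. Writing $\pi_\omega(n)=n+\omega(n)$ for the function associated with a configuration $\omega\in\Omega_I(A)$ or $\Omega_S(A)$, I would set, for each $m\in\Z^d$,
\[ X_m(\omega)\eqdef \abs{\pi_\omega^{-1}(m)}=\abs{\mathset{n\in\Z^d:\pi_\omega(n)=m}}. \]
Since $\pi_\omega(n)=m$ forces $\omega(n)=m-n\in A$, the preimages of $m$ all lie in $m-A$, so $0\le X_m\le\abs{A}$ and $X_m$ is a bounded, hence integrable, random variable. A direct check using $(\sigma_k\omega)(n)=\omega(n+k)$ shows $X_m\circ\sigma_k=X_{m+k}$, so by shift-invariance of the measure $\mu$ all the $X_m$ are identically distributed; in particular $\E[X_m]=\E[X_0]\eqdef c$ for every $m$.

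The core step is to prove that $c=1$ by a finite-box mass-transport argument. Let $r\eqdef\max_{a\in A}\norm{a}_\infty$ and $B_N\eqdef [N]^d$. Every $\omega$ satisfies $\sum_{m\in B_N}X_m(\omega)=\abs{\mathset{n\in\Z^d:\pi_\omega(n)\in B_N}}$, because each $n$ contributes $1$ to exactly one summand, namely $m=\pi_\omega(n)$. As $\pi_\omega$ displaces every point by at most $r$ in the $\norm{\cdot}_\infty$ norm, the set on the right contains the inner box $\mathset{r,\dots,N-1-r}^d$ and is contained in $\mathset{-r,\dots,N-1+r}^d$; hence it differs from $N^d$ by at most $CN^{d-1}$ for a constant $C=C(A,d)$, uniformly in $\omega$. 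Taking expectations and using $\E\bigl[\sum_{m\in B_N}X_m\bigr]=\abs{B_N}\,c=N^d c$ gives $(N-2r)^d\le N^d c\le (N+2r)^d$, and dividing by $N^d$ and letting $N\to\infty$ forces $c=1$.

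With $c=1$ established the two cases become symmetric and immediate. If $\mu$ is supported on $\Omega_I(A)$ then $X_0\le 1$ almost surely, and a nonnegative random variable bounded by $1$ with expectation $1$ equals $1$ almost surely, so $X_0=1$ a.s.; dually, if $\mu$ is supported on $\Omega_S(A)$ then $X_0\ge 1$ a.s., and an integer random variable that is at least $1$ with expectation $1$ again equals $1$ a.s. In either case $X_0=1$ a.s., and since the $X_m$ are identically distributed, $X_m=1$ a.s. for each fixed $m$. The event that $\pi_\omega$ is a bijection is exactly $\bigcap_{m\in\Z^d}\mathset{X_m=1}$, a countable intersection of full-measure events, hence itself of full measure; therefore $\mu(\Omega(A))=\mu(\Omega_I(A)\cap\Omega_S(A))=1$, which is the claim.

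I expect the only genuine work to be the identity $c=1$: one must verify that the boundary discrepancy between the points of $B_N$ and the points mapped into $B_N$ is of lower order $O(N^{d-1})$, and this is precisely the place where the finiteness of $A$ (bounded displacement) is essential — the argument fails for infinite $A$. Everything after that is routine: the passage from the expectation identity $\E[X_0]=1$ to the almost-sure equalities via the one-sided bounds $X_0\le 1$ or $X_0\ge 1$, and the reduction of ``bijection'' to a countable intersection of single-point events, together yield the global conclusion without any appeal to ergodicity.
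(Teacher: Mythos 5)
Your proof is correct, and it takes a genuinely different and more elementary route than the paper's. Both arguments hinge on the same geometric fact — bounded displacement ($A$ finite) forces $\pi_\omega^{-1}(B_N)$ to be sandwiched between boxes of sizes $(N-2r)^d$ and $(N+2r)^d$ — but you exploit it through pure linearity of expectation: shift-invariance makes $\E[X_m]$ a constant $c$, the deterministic sandwich forces $N^d c$ between $(N-2r)^d$ and $(N+2r)^d$, hence $c=1$, and the one-sided bounds $X_0\le 1$ (injective case) or $X_0\ge 1$ (surjective case) then give $X_0=1$ almost surely. The paper instead first treats ergodic measures, invoking the pointwise ergodic theorem of Lindenstrauss along the F\o lner sequence of cubes to identify $\lim_k A_{n_k}=\E_\mu[P_0]$ with the pointwise limit $1$, and then passes to general invariant measures via the ergodic decomposition theorem and a weak-* approximation argument (which implicitly needs $\Omega(A)$ to be closed so that full measure survives the limit). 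Your approach buys simplicity and self-containedness: no ergodicity, no ergodic theorem, no decomposition, and no topological considerations about the support — just measurability and boundedness of the finitely supported counting functions $X_m$, which are finite sums of indicators of cylinder sets. What the paper's route buys is mainly conceptual: the ergodic-theoretic framing exhibits the constant $1$ as an almost-sure spatial average, which fits the toolkit used elsewhere in that section; but as a proof of this proposition, your argument is strictly leaner and equally rigorous.
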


\begin{proof}
First, we prove the theorem for ergodic measures in $M_{\Z^d}(\Omega_I(A))$.  Recall that each element $\omega\in A^{\Z^d}$ is identified with a restricted function $\Z^d \to \Z^d$  by $f_\omega(n)\eqdef n+\omega(n)$. For $n\in \Z^d$ consider the function $P_n:A^{\Z^d}\to \N\cup \mathset{0}$ which assigns each $\omega\in A^{\Z^d}$ the number of pre-images of $n$. That is, $P_n(\omega)\eqdef \abs{f_\omega^{-1}(\mathset{n})}=\abs{\mathset{m\in \Z^d~:~ m+\omega(m)=n}}$.
Since $A$ is finite, there exists $M\in \N$ such that $A\subseteq [-M,M]^d$. For $(n_1,n_2,\dots,n_d)=n\in \N^d\setminus [-2M,2M]^d $ and $\omega\in A^{\Z^d}$ we examine the average of the functions $(P_m)_{m\in [n]}$, denoted by $A_n(\omega)$,
\begin{align*}
A_n(\omega)=\frac{\sum_{m\in [n]}P_m(\omega)}{\abs{[n]}}=\frac{\sum_{m\in [n]} \abs{f_\omega^{-1}(\mathset{m})}}{\abs{[n]}}=\frac{ \abs{f_\omega^{-1}([n])}}{\abs{[n]}}.
\end{align*}
Since $A$ is bounded in $[-M,M]^d$ and the movements of $f_\omega$ are restricted by $A$, we have  
\[[n_1-2M]\times \cdots \times [n_d-2M] \subseteq  f_\omega^{-1}([n]) \subseteq [n_1+2M]\times \cdots \times [n_d+2M].\] 
Thus, if we choose $n_k\eqdef (k,k,\dots,k)$ for $k\in \N$, for sufficiently large $k$, 
\[\frac{(k-2M)^d}{k^d}\leq A_{n_k}(\omega)\leq \frac{(k+2M)^d}{k^d}, \]
and in particular, $\lim_{k\to \infty} A_{n_k}(\omega)=1$.

Consider the measurable space $(\Omega_I,\mathcal{B_I})$, where $\mathcal{B_I}$ is the Borel $\Sigma$-algebra on $\Omega_I$. We note that for any $n\in \Z^d$ and $\omega\in \Omega_I$, $P_n(\omega)=P_0\circ \sigma_n(\omega)=P_0(n\omega)$, where $\sigma_n$ is the shift operation on $\Z^d$ and $n\omega$ denotes the group action of $n$ on $\omega$.

Consider the sequence of cubes, $([n_k])_{k=1}^{\infty}$, which is a Folner  sequence.
By the Pointwise Ergodic Theorem \cite{Lin01}, for an ergodic $\mu \in M_{\Z^d}(\Omega_I(A))$ the sequence $(A_{n_k})_k$ converges almost everywhere to $\E_\mu[P_0]$ (the expectation of $P_0$ with respect to the measure $\mu$). On the other hand, we saw that the sequence $(A_{n_k})_k$ converges pointwise to the constant function $1$. We conclude that $\E_\mu[P_0]=1$.

We observe that $P_0$ can take only the values $0$ and $1$ on $\Omega_I(A)$, as any function defined by an element in $\Omega_I(A)$ is injective. Hence, 
\[ 1=\E_\mu[P_0]=1\cdot \mu(P_0=1)+0\cdot \mu(P_0=0)=\mu(P_0=1).\]
Since $\mu$ is invariant under the action of $\Z^d$, for all  $n\in \Z^d$, \[\mu(P_0=1)=\mu(P_0\circ \sigma_n=1)=\mu(P_n=1). \]

We note that for $\omega\in \Omega_I(A)$, we have that $\omega\in \Omega(A)$ (i.e., $\omega$ represents a permutation), if and only if $f_\omega$ is also surjective. That is, any $n\in \N$ has a unique pre-image, which in the notation of this proof, is equivalent to $P_n(\omega)=1$ for all $n\in \Z^d$. We conclude that 
\[ \mu(\Omega(A))=\mu\parenv{\bigcap_{n\in \Z^d}\mathset{P_n=1}}=1.\] 

Now we turn to prove the claim for general $\mu\in \Omega_I(A)$. If $\mu$ is a convex combination of ergodic measures, then the claim follows immediately from the first case. For a general $\mu\in M_{\Z^d}(\Omega_I(A))$, By the ergodic decomposition theorem \cite[Theorem 4.8]{EinWar13}, $\mu$ is in the closed convex hull of the ergodic measures. That is, there is a sequence of measures $(\mu_n)_n\subseteq M_{\Z^d}(\Omega_I(A))$ which converges to  $\mu$ in the weak-* topology, and $\mu_n$ is a convex combination of a ergodic measures for each $n$. We obtain,
\[ \mu(\Omega(A))=\lim_{n\to\infty}\mu_n(\Omega(A))=1. \]

The proof for $\Omega_S(A)$ is very similar. Considering the restriction of the functions $(P_n)_{n\in \Z^d}$ to $\Omega_S(A)$, we observe that they can only take values greater than or equal to $1$ as for any element $\omega\in \Omega_S(A)$, $f_\omega$ is surjective. By similar arguments as in the previous case, for any ergodic invariant probability measure $\nu\in M_{\Z^d}(\Omega_S(A))$, we have $\E_\nu[P_n]=1$ for all $n\in \Z^d$. Since $P_n\geq 1$, we conclude that $\nu (P_n=1)=1$ for all $n$ and therefore $\nu (\Omega(A))=1$. For a non-ergodic measure, we continue in a similar fashion as in the proof of the claim for $\Omega_I(A)$. 
\end{proof}

\begin{theorem}
\label{th:InjSurj}
For any finite non-empty set  $A\subseteq\Z^d$, 
\[ \topent(\Omega(A))=\topent(\Omega_I(A))=\topent(\Omega_S(A)). \] 
\end{theorem}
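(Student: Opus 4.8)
The plan is to invoke the variational principle together with Proposition~\ref{prop:SuppMeasure} in order to collapse the measure-theoretic entropies of the three systems onto a common value. Since $\Z^d$ is amenable, the variational principle holds for each of the SFTs $\Omega(A)$, $\Omega_I(A)$ and $\Omega_S(A)$ acted on by shifts: the topological entropy equals the supremum of the Kolmogorov--Sinai entropies $h_\mu$ taken over all shift-invariant Borel probability measures. Thus I would begin by recording, for $X\in\{\Omega(A),\Omega_I(A),\Omega_S(A)\}$,
\[ \topent(X) = \sup_{\mu \in M_{\Z^d}(X)} h_\mu. \]

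The crucial observation is that these three suprema range over the same set of measures. Since $\Omega(A)=\Omega_I(A)\cap\Omega_S(A)$ is a closed shift-invariant subset of both $\Omega_I(A)$ and $\Omega_S(A)$, every invariant measure on $\Omega(A)$ is in particular an invariant measure on $\Omega_I(A)$ and on $\Omega_S(A)$. Conversely, Proposition~\ref{prop:SuppMeasure} asserts that the support of any shift-invariant measure on $\Omega_I(A)$ (respectively $\Omega_S(A)$) is contained in $\Omega(A)$. Hence the natural inclusions induce an identification
\[ M_{\Z^d}(\Omega(A)) = M_{\Z^d}(\Omega_I(A)) = M_{\Z^d}(\Omega_S(A)), \]
where each invariant measure on a larger space is viewed as the invariant measure on $\Omega(A)$ obtained by restriction to its support.

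It then remains to verify that a fixed measure $\mu$ supported on $\Omega(A)$ has the same Kolmogorov--Sinai entropy whether regarded as a measure on $\Omega(A)$, on $\Omega_I(A)$, or on $\Omega_S(A)$. This is the one point that genuinely needs care, and where I would be most cautious: $h_\mu$ is an intrinsic invariant of the measure-preserving $\Z^d$-system $(\Omega,\mu,\sigma)$, computed from finite measurable partitions of the probability space $(\Omega,\mu)$. Because $\mu(\Omega(A))=1$, the inclusion $\Omega(A)\hookrightarrow\Omega_I(A)$ is a measure-preserving bijection onto a full-measure subset, hence a measure isomorphism of the two systems; equivalently, any finite partition of the ambient space agrees $\mu$-almost everywhere with its restriction to $\Omega(A)$, so the corresponding refining partition sequences have identical entropies. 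I expect this invariance of $h_\mu$ under restriction to the support to be the main (though standard) obstacle, the rest being bookkeeping.

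Granting this invariance and combining it with the identification of invariant measures, I would conclude
\[ \topent(\Omega_I(A)) = \sup_{\mu \in M_{\Z^d}(\Omega_I(A))} h_\mu = \sup_{\mu \in M_{\Z^d}(\Omega(A))} h_\mu = \topent(\Omega(A)), \]
and identically for $\Omega_S(A)$, which finishes the proof. I would also remark that the easy inequality $\topent(\Omega(A))\le\min\{\topent(\Omega_I(A)),\topent(\Omega_S(A))\}$ already follows from monotonicity of topological entropy under passage to subsystems (using $\Omega(A)\subseteq\Omega_I(A)$ and $\Omega(A)\subseteq\Omega_S(A)$), so that all the substance lies in the reverse inequality, which is exactly what Proposition~\ref{prop:SuppMeasure} delivers through the variational principle.
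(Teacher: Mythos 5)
Your proposal is correct and follows essentially the same route as the paper: both proofs combine the variational principle for $\Z^d$-actions with Proposition~\ref{prop:SuppMeasure} to transfer invariant measures (and their entropies) from $\Omega_I(A)$ and $\Omega_S(A)$ to $\Omega(A)$, with the reverse inequality being trivial from the inclusions. The only cosmetic difference is that the paper picks a measure of maximal entropy and restricts it, whereas you work with suprema and spell out the (standard) fact that Kolmogorov--Sinai entropy is unchanged when a measure is viewed on a full-measure invariant subsystem, a point the paper leaves implicit.
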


\begin{proof}
By the variational principle \cite{Mis67}, 
\[ \topent(\Omega_I(A))=\max_{\mu\in M_{\Z^d}(\Omega_I(A))}H(\mu), \] 
where $H(\mu)$ is the measure theoretical entropy of $\mu$. Let $\mu_0\in M_{\Z^d}(\Omega_I(A))$ be a measure such that $ \topent(\Omega_I(A))=H(\mu_0)$. Proposition~\ref{prop:SuppMeasure} suggest that $\mu_0(\Omega(A))=1$, therefore the restriction of $\mu_0$ to the subspace $\Omega(A)$ is an invariant probability measure on $\Omega(A)$. Using the  variational principle once again, 
\[ \topent(\Omega(A))=\max_{\mu\in M_{\Z^d}(\Omega(A))}H(\mu)\geq  H(\mu_0)=\topent(\Omega_I(A)).\] 
The other direction of inequality is trivial as $\Omega(A)\subseteq \Omega_I(A)$, so equality holds. 
The proof for $\Omega_S(A)$ is exactly the same.

\end{proof}


\section{Further questions and comments}
We conclude with some comments and further questions:
\begin{enumerate}
\item Mahler measures and entropy: The Mahler measure of a complex polynomial in $d$ variables, $p(x)$, is defined to be 
$M(p)\eqdef \frac{1}{(2\pi)^d}\int_{[0,2\pi]^d}\log\parenv{\abs{p(e^{i\theta_1},\dots,e^{i\theta_d})}} d\bar{\theta}$. In the cases which we studied, we showed that the entropy of restricted permutations is given by Mahler measures of polynomials. Is there a more general connection between Mahler measures and the entropy of restricted $\Z^d$ permutations? Given a finite $A\subseteq \Z^d$, is there a canonical way to find a polynomial $P_A$ whose Mahler measure bounds (or is equal to) $\topent(\Omega(A))$?  
\item Periodic entropy and topological entropy: Is it true that $\topent_p(\Omega(A))=\topent(\Omega(A))$ for any finite $A\subseteq \Z^d$?. The answer to that question is positive in the cases studied in this work (where $A=A_+,A_L$).
\item Local and global admissibility: Can Proposition~\ref{prop:compli2} concerning the equivalence between local and global admissibility of rectangular two dimensional patterns be generalized? It seems that it may be true for convex patterns and a wider class of restricting sets. 
\end{enumerate}




\bibliographystyle{AIMS}
\begin{bibdiv}
\begin{biblist}

\bib{MeyCha19}{article}{
      author={Chandgotia, Nishant},
      author={Meyerovitch, Tom},
       title={Borel subsystems and ergodic universality for compact
  {$\mathbb{Z}^d$}-systems via specification and beyond},
        date={2019},
     journal={arXiv preprint arXiv:1903.05716},
         url={https://arxiv.org/abs/1903.05716},
}

\bib{ChoKenPro01}{article}{
      author={Cohn, Henry},
      author={Kenyon, Richard},
      author={Propp, James},
       title={A variational principle for domino tilings},
        date={2001},
     journal={American Mathematical Society},
      volume={14},
       pages={297\ndash 346},
}

\bib{EinWar13}{book}{
      author={Einsiedler, Manfred},
      author={Ward, Thomas},
       title={Ergodic theory},
   publisher={Springer},
        date={2013},
}

\bib{Eli19}{article}{
      author={Elimelech, Dor},
       title={Permutations with restricted movement},
        date={2019},
     journal={arXiv:1911.02233},
        note={Master's Thesis},
}

\bib{Fis61}{article}{
      author={Fisher, M.~E.},
       title={Statistical mechanics of dimers on plane lattice},
        date={1961-12},
     journal={Phys.~Rev.},
      volume={124},
      number={6},
       pages={1664\ndash 1672},
}

\bib{Fis66}{article}{
      author={Fisher, M.~E.},
       title={On the dimer solution of planar {I}sing models},
        date={1966-10},
     journal={J.~of Math.~Phys.},
      volume={7},
      number={10},
       pages={1776\ndash 1781},
}

\bib{HocMey10}{article}{
      author={Hochman, Michael},
      author={Meyerovich, Tom},
       title={A characterization of the entropies of multidimensional shifts of
  finite type},
        date={2010-05},
     journal={Annals of Math.},
      volume={171},
      number={3},
       pages={2011\ndash 2038},
}

\bib{Kas61}{article}{
      author={Kasteleyn, P.~W.},
       title={The statistics of dimers on a lattice. {I}. {T}he number of dimer
  arrangements on a quadratic lattice},
        date={1961},
     journal={Physica},
      volume={27},
       pages={1209\ndash 1225},
}

\bib{Kas63}{article}{
      author={Kasteleyn, Pieter~W.},
       title={Dimer statistics and phase transitions},
        date={1963},
     journal={Journal of Mathematical Physics},
      volume={4},
      number={2},
       pages={287\ndash 293},
}

\bib{Ken00}{incollection}{
      author={Kenyon, R.},
       title={The planar dimer model with boundary: a survey},
        date={2000},
   booktitle={{CRM} monograph series},
      editor={Baake, M.},
      editor={Moody, R.},
   publisher={AMS, Providence, RI},
       pages={307\ndash 328},
}

\bib{KenOkoShe06}{article}{
      author={Kenyon, Richard},
      author={Okounkov, Andrei},
      author={Sheffield, Scott},
       title={Dimers and amoebae},
        date={2006},
     journal={Annals of Math.},
      volume={163},
       pages={1019\ndash 1056},
}

\bib{KerHan11}{article}{
      author={Kerr, David},
      author={Li, Hanfeng},
       title={Entropy and the variational principle for actions of sofic
  groups},
        date={2011},
     journal={Inventiones mathematicae},
      volume={186},
      number={3},
       pages={501\ndash 558},
}

\bib{LinMar85}{book}{
      author={Lind, D.},
      author={Marcus, B.~H.},
       title={An introduction to symbolic dynamics and coding},
   publisher={Cambridge University Press},
        date={1985},
}

\bib{Lin01}{article}{
      author={Lindenstrauss, Elon},
       title={Pointwise theorems for amenable groups},
        date={2001},
     journal={Inventiones mathematicae},
      volume={146},
      number={2},
       pages={259\ndash 295},
}

\bib{Mis67}{article}{
      author={Misiurewicz, Michal},
       title={A short proof of the variational principle for a {$\mathbb{Z}^N$}
  action on a compact space},
        date={1976},
     journal={Asterisque},
      volume={40},
       pages={147\ndash 157},
}

\bib{Rob71}{article}{
      author={Robinson, Raphael~M.},
       title={Undecidability and nonperiodicity for tilings of the plane},
        date={1971},
     journal={Inventiones mathematicae},
      volume={12},
      number={3},
       pages={177\ndash 209},
}

\bib{Schm12}{book}{
      author={Schmidt, Klaus},
       title={Dynamical systems of algebraic origin},
   publisher={Springer Science \& Business Media},
        date={2012},
}

\bib{SchStr17}{article}{
      author={Schmidt, Klaus},
      author={Strasser, Gabriel},
       title={Permutations of {$\mathbb{Z}^d$} with restricted movement},
        date={2016},
     journal={Studia Mathematica},
      volume={234},
       pages={137\ndash 170},
}

\bib{SchBru08}{article}{
      author={Schwartz, Moshe},
      author={Bruck, Jehoshua},
       title={Constrained codes as networks of relations},
        date={2008-05},
     journal={IEEE Trans.~Inform.~Theory},
      volume={54},
      number={5},
       pages={2179\ndash 2195},
}

\bib{TemFis60}{article}{
      author={Temperley, H. N.~V.},
      author={Fisher, M.~E.},
       title={Dimer problem in statistical mechanics --- an exact result},
        date={1960},
     journal={Phil. Mag.},
      volume={6},
       pages={1061\ndash 1063},
}

\end{biblist}
\end{bibdiv}



\end{document}